\newcommand{\triple}[2]{|\!|\!|#1|\!|\!|_{#2}}
\newcommand{\bs}{\boldsymbol}
\newcommand{\mb}{\mathbf}
\newcommand{\mc}{\mathcal}
\newcommand{\mr}{\mathrm}
\newcommand{\R}{\mathbb R}
\newcommand{\C}{\mathbb C}
\newcommand{\CC}{{\mathrm C}}
\newcommand{\MM}{{\mathrm M}}
\newcommand{\II}{{\mathrm I}}
\newcommand{\eps}[1]{\bs\varepsilon(#1)}
\newcommand{\uu}{{\mathbf u}}
\newcommand{\ww}{{\mathbf w}}
\newcommand{\HH}{{\mathbf H}}
\newcommand{\matC}{\mathbb C^{d\times d}}
\newcommand{\matR}{\mathbb R^{d\times d}}
\newcommand{\symm}{{\mathrm{sym}}}
\newcommand{\skw}{{\mathrm{skw}}}
\newcommand{\diff}{{\mathrm{diff}}}
\newcommand{\Om}{\Omega}
\newcommand{\Cdiff}{{\mathrm C_\diff}}
\newcommand{\Ltwo}{\mathbb L^2(\Omega)}
\renewcommand{\Re}{\mathrm{Re}\,}
\newtheorem{proposition}{Proposition}[section]
\newtheorem{corollary}[proposition]{Corollary}
\newtheorem{lemma}[proposition]{Lemma}
\newtheorem{theorem}[proposition]{Theorem}
\newtheorem{hypothesis}{Hypothesis}
\newcommand{\abssec}[1]{\noindent\normalsize {\bfseries #1\quad }\ignorespaces}
\newenvironment{keywords}{\abssec{Key Words}}{\par\vspace{.1in}}
\newenvironment{AMS}{\abssec{AMS subject
		classification}}{\par\vspace{.1in}}
\numberwithin{equation}{section}
\title{Analysis of models for viscoelastic wave propagation}
\date{\today}
\author{Thomas Brown\footnote{Department of Mathematical Sciences, University of Delaware. {\tt tsbrown@udel.edu}}, Shukai Du\footnote{Department of Mathematical Sciences, University of Delaware. {\tt shukaidu@udel.edu}}, Hasan Eruslu\footnote{Department of Mathematical Sciences, University of Delaware. {\tt heruslu@udel.edu}}, Francisco-Javier Sayas\footnote{Department of Mathematical Sciences, University of Delaware. {\tt fjsayas@udel.edu}}}
\begin{document}

\maketitle

\begin{abstract}
We consider the problem of waves propagating in a viscoelastic solid.  For the material properties of the solid we consider both classical and fractional differentiation in time versions of the Zener, Maxwell, and Voigt models, where the coupling of different models within the same solid are covered as well.  Stability of each model is investigated in the Laplace domain, and these are then translated to time-domain estimates.   With the use of semigroup theory, some time-domain results are also given which avoid using the Laplace transform and give sharper estimates.   We take the time to develop and explain the theory necessary to understand the relation between the equations we solve in the Laplace domain and those in the time-domain which are written using the language of causal tempered distributions.  Finally we offer some numerical experiments that highlight some of the differences between the models and how different parameters effect the results.  
\end{abstract}

\begin{AMS}
35B35,      
35L05,      
46F12,      
65M60,  	
65J08,  	
74B99,      
\end{AMS}

\begin{keywords}
Hyperbolic PDE, viscoelasticity, Laplace transforms, semigroups of operators, stability analysis, fractional derivatives.
\end{keywords}

\section{Introduction}

This paper offers a thorough introduction to mathematical tools to describe wave propagation in solids modeled with a wide collection of viscoelastic laws. Before we even attempt a general description of the models we will be addressing, let us emphasize what our goals are and what has not been tackled in the present paper. We aim for a unified mathematical description of a wide collection of known viscoelastic models, including basic well-posedness results. The models will include all classical viscoelastic wave models, fractional versions thereof, and couplings of different models in different subregions. The techniques that we will employ in the first part of the article (Sections \ref{sec:3} through \ref{sec:7}) are those of Laplace transforms, understanding the influence of volume forces, normal stresses, and given displacements, as well as the strain-stress relation as transfer functions describing linear distributional processes in the time domain. These techniques are classical, although we will use them in a language that is borrowed from the recent literature of time domain integral equations. In a second part of the paper (Sections \ref{sec:8} to \ref{sec:10}), we will introduce and use tools from the theory of strongly continuous semigroups to analyze the three classical models and some transitional situations where, for instance, classical Zener-style viscoelasticity coexists with pure linear elasticity in different subdomains, with smooth or abrupt transition regions. Our goals for the current piece of work are not in the realm of the modeling: we will analyze but not discuss known models, and we will not deal with physical justifications thereof. A particular issue where we will be very restrictive is the fact that we will only deal with solids moving from equilibrium (no displacement, strain, or stress) at time zero. There are practical reasons for this choice (since stress remembers past strain, it is not entirely justifiable to start the clock with known displacement and stress), but we are also restricted because of our analysis goals. In both parts (transfer function analysis and semigroup analysis) we will give a hint at how to deal with initial conditions.

Barring initial and boundary conditions that are needed to fully describe the model, our goal is the study of a linear elasticity equation
\[
\rho \ddot{\mathbf u}=\mathrm{div}\,\boldsymbol\sigma+\mathbf f
\]
in a bounded domain of $d$-dimensional space. Here $\mathbf u$ is the displacement field, upper dots denote time differentiation, $\boldsymbol\sigma$ is the stress tensor and $\mathbf f$ represents the volumetric forces. Linear strain $\boldsymbol\varepsilon=\frac12(\nabla \mathbf u + ( \nabla \mathbf u)^\top)$ determines stress through a generic convolutional law (we only display the time variable in the following formulas)
\[
\boldsymbol\sigma(t)=\int_0^t \mathcal D(t-\tau){\dot{\boldsymbol\varepsilon}(\tau)}\mathrm d\tau,
\]
where $\mathcal D$ is a time-dependent, possibly distributional, tensor-valued kernel. Following the careful description given by Francesco Mainardi in his monograph \cite{Mainardi2010}, the four classical models of viscoelasticity are named after Zener, Voigt, Maxwell, and Newton. The strain-to-stress relationship is given by a differential equation as follows:
\begin{alignat*}{6}
\boldsymbol\sigma+a\,\dot{\boldsymbol\sigma}
&=\mathrm C_0 \boldsymbol\varepsilon+\mathrm C_1\dot{\boldsymbol\varepsilon}, & \qquad &
\mbox{(Zener)} \\
\boldsymbol\sigma
&=\mathrm C_0 \boldsymbol\varepsilon+\mathrm C_1\dot{\boldsymbol\varepsilon}, & \qquad &
\mbox{(Voigt)}\\
\boldsymbol\sigma+a\,\dot{\boldsymbol\sigma}
&=\mathrm C_1\dot{\boldsymbol\varepsilon}, & \qquad &
\mbox{(Maxwell)}\\
\boldsymbol\sigma
&=\mathrm C_1\dot{\boldsymbol\varepsilon}, & \qquad &
\mbox{(Newton)}\\
\boldsymbol\sigma
&=\mathrm C_0\boldsymbol\varepsilon. & \qquad &
\mbox{(Linear elasticity)}
\end{alignat*}
Here $a$ is a non-negative function and $\mathrm C_0$ and $\mathrm C_1$ are four-index tensors satisfying hypotheses similar to the tensor that is used to describe linear elasticity (we will be covering heterogeneous anisotropic solids), with some additional conditions that will be introduced when we explain the models in detail.
Formally speaking, the last three models can be considered as particular examples of Zener's model. However, they have very different properties. Newton's model is equivalent to a parabolic equation, as can be seen by substituting the formula for $\boldsymbol\sigma=\mathrm C_1\dot{\boldsymbol\varepsilon}$ in the equation of conservation of momentum and integrating once in the time variable. This model therefore does not produce waves and will be ignored in the sequel. Voigt's model also gives an explicit strain-to-stress relation: if we substitute $\boldsymbol\sigma=\mathrm C_0 \boldsymbol\varepsilon+\mathrm C_1\dot{\boldsymbol\varepsilon}$ in the dynamical equation we can see that we end up with a PDE of order three (there are terms with two derivatives in space and one in time). The models of Zener, Maxwell, and Voigt dissipate energy.

Let us now give a quick literature review, including some relevant work from the modeling and mathematical communities. Some of the monographs \cite{Mainardi1982, Mainardi2010, AtPiStZo2014a} contain a large collection of references that can be used for a deeper introduction to this fascinating area.  For a very early introduction to linear viscoelasticity, see \cite{GuSt1962}. A generalized model that encompasses all classical models describe above four is examined in \cite[Chapter 3]{AtPiStZo2014a}.  An overview of the physics of viscoelasticity and its relation to rheology (fluid and soft solid flow) can be found in \cite{Phan2013} and \cite{MaCr2012}, where the latter also develops numerical methods for solving problems associated to viscoelasticity, while an overview of the mathematical theories and techniques, including the problem of waves propagating in viscoelastic media can be found in \cite{FaMo1992}.  Viscoelastic models have also been formulated in the language of integral equations (see \cite{ShNe2014, FaMo1992, KeHe2014}), and electrostatic models like the Cole-Cole model in \cite{LuHa2004}.  

We wish to consider waves propagating in viscoelastic solids using both the classical models and those using fractional derivatives.  The relation between fractional derivatives and viscoelastic models (including introductions to the Mittag-Leffler functions) can be found in \cite{Mainardi2010, MaGo2000}, and \cite{Mainardi2012} offers a short survey of the history of development of this theory. Mainardi, who it can easily be seen is at the center of much of the development and communication of waves in viscoelasticity, shows in \cite{Mainardi2018} that the fractional relaxation process with constant coefficients is equivalent to a similar process governed by variable coefficient ODE.  This relationship has also been explored in \cite{LuHa2004}, where fractional derivatives are bypassed by using a method of Yuan and Agrawal to convert the equations to a system of first order ODE.  The Zener model has been extensively studied in the context of waves with both classical \cite{AtPiStZo2014b, BeEzJo2004, Lee2017} and fractional \cite{NaHo2013} time derivatives, as well as in the quasistatic case \cite{RiShWhWh2003, PeKa2014}.  Other authors have explored the Maxwell and fractional Maxwell models \cite{CoGiMa2017b}, different variations of Maxwell models \cite{CoGiMa2017a, GiCo2018}, and the Voigt model \cite{BeEzJo2004,KeHe2014,LiZhLu2016}. 

For some of our stability results, we make rigorous use of Laplace transforms for vector-valued distrivutions.  Laplace transforms also appear in \cite{AtPiStZo2014b, FaMo1992, GiCo2018, GuSt1962, NaHo2013, PeKa2014}, used in the context of showing existence and uniqueness of solutions, justifications of models, exploration of the constitutive relations, or to simplify numerical implementation.   In the context of stability analysis, a Green's function representation of the solution to the three dimensional wave problem is used in \cite{FaMo1992}.  While we obtain some estimates in the time domain by making use of an inversion theorem for the Laplace transformation (in the spirit of the Payley-Wiener theorems), in some cases can make use of semigroup theory to obtain estimates directly.  Similar analysis has been performed for waves in unbounded domains \cite{BeEzJo2004} and in bounded domains \cite{Lee2017}, where  semigroup analysis is used to show the existence and uniqueness of solutions for waves in a Zener model.   

While not explored in detail in the present work, we are also interested in the analysis and implementation of numerical schemes for the simulation of waves in viscoelastic materials.  As mentioned earlier, \cite{MaCr2012} contains an overview of numerical methods for problems in viscoelasticity including finite element, boundary element, and finite volume formulations.  Numerical implementation with finite elements has also been explored in \cite{KeHe2014} and \cite{YuPeKa2016} for the simulation and comparison to real world data, specifically blood flow in \cite{YuPeKa2016}.  Also in the context of blood flow, \cite{PeKa2014} uses discontinuous Galerkin methods to simulate a quasistatic nonlinear 1D fractional Zener model. A DG method for a general linear quasistatic viscoelastic model is proposed in \cite{RiShWhWh2003} and a priori error estimates are derived.  Convergence of finite element methods for viscoelasticity is explored \cite{LaRaSa2015} and \cite{LiZhLu2016, Lee2017}, where the first reference focuses on convergence in time while the latter two are concerned with optimal order of convergence in space.  Coupling of elastic and viscoelastic subdomains is examined in \cite{MeCo2003} where boundary elements for the viscoelastic subdomain are coupled with finite elements for the elastic components, whereas in \cite{TrPe2014} a scheme involving only finite elements are used for the same problem and the two schemes are compared. 

Our paper is structured as follows. After introducing the general model  (Section \ref{sec:2}), we give a general framework for the viscoelastic material law as a transfer function (Section \ref{sec:3}) and then move on to prove that the main classical models (Zener, Maxwell, Voigt), fractional versions of them, and combinations of different models in different subdomains, fit in our general framework (Section \ref{sec:4}). Sections \ref{sec:5}-\ref{sec:7} contain the Laplace domain analysis of the model carried out as follows: first we do a transfer function analysis, then we give the general theory of how to understand transfer functions as Laplace transforms of distributional convolutions in the time variable, and finally we give estimates for the case of smooth (in time) data. In Sections \ref{sec:8}-\ref{sec:10}, we start with a semigroup analysis of the classical models. Because we are striving for generality, we make an effort to include models where the classical viscoelastic models can degenerate into classical elasticity, which motivates a careful discussion of the closure process of a normed space with respect to a certain seminorm and how this affects the action of some operators. Section \ref{sec:SEMI} gives a detailed treatment of Zener's model, using the tools of the previous section and well-known results of the theory of strongly continuous semigroups in Hilbert spaces. Section \ref{sec:10} sketches the main changes that need to be made to the preceding analysis to study Voigt's and Maxwell's models. Finally, and just for the sake of illustration, we show some simulations for one and three dimensional models. 

Before we proceed with the work at hand, let us give here some quick notational pointers.
While the transient models we will be describing and analysing in this paper take values on spaces of real valued functions, the transfer function analysis will require the introduction of complex variables and complex-valued functions. To be on the safe side, {\em all brackets and forms considered in this paper will be linear or bilinear}, never conjugate linear or sesquilinear.
We will write
\[
\mathrm A:\mathrm B=\sum_{i,j=1}^d a_{ij}b_{ij},
\qquad \mathrm A,\mathrm B\in \matC,
\]
with no conjugation involved. 
The upperscript $\top$ will be used for transposition of matrices, without conjugation. 
Note that $\mathrm A:\mathrm B=0$ for all $\mathrm A\in \matC_\symm:=\{ \mathrm A\in \matC\,:\, \mathrm A^\top=\mathrm A\}$, and $\mathrm B\in \matC_\skw:=\{\mathrm B\in \matC\,:\, \mathrm B^\top=-\mathrm B\}$. We will write $\|\MM\|^2=\MM:\overline\MM$. 

Given two Banach spaces $X$ and $Y$, we will consider the space $\mathcal B(X,Y)$ of bounded linear maps from $X$ to $Y$ with the operator norm. We will shorten $\mathcal B(X):=\mathcal B(X,X)$.

\section{An introduction to the model problem}\label{sec:2}

The wave propagation problem will be given in an open bounded domain $\Omega\subset \mathbb R^d$, whose boundary is denoted by $\Gamma$. In order to have a well defined trace operator in classical Sobolev spaces, we will assume that $\Omega$ is locally a Lipschitz hypograph, although this hypothesis can be relaxed as long as we have a trace operator.
We will assume that $\Gamma$ is decomposed into Dirichlet and Neumann parts, $\Gamma_D$ and $\Gamma_N$, satisfying 
\[
\overline{\Gamma_D}\cup \overline{\Gamma_N}=\Gamma, \qquad \Gamma_D\cap\Gamma_N=\emptyset.
\]

The inner products in the Lebesgue spaces
\[
L^2(\Omega), \quad \mathbf L^2(\Omega):=L^2(\Omega;\R^d), \quad 
\mathbb L^2(\Omega):=L^2(\Omega;\R^{d\times d}_\symm)
\]
(note that the latter is a space of symmetric-matrix-valued functions) will be respectively denoted by
\[
(u,v)_\Omega:=\int_\Omega u\,v,\qquad
(\uu,\mathbf v)_\Omega:=\int_\Omega \uu\cdot\mathbf v,\qquad
(\mathrm S,\mathrm T)_\Omega:=\int_\Omega \mathrm S\,:\,\mathrm T,
\]
and $\|\cdot\|_\Omega$ will denote the associated norm in all three cases. We will also consider the Sobolev space
\[
\HH^1(\Omega)=H^1(\Omega;\R^d) 
:=\{ \uu :\Omega \to \R^d\,:\, \uu\in \mathbf L^2(\Omega),\, 
\nabla \uu \in L^2(\Omega;\R^{d\times d})\},
\]
endowed with the norm 
\[
\| \uu\|_{1,\Omega}^2:=\| \uu\|_\Omega^2+\|\nabla \uu\|_\Omega^2,
\]
and the symmetric gradient operator
\[
\HH^1(\Omega)\ni \uu\longmapsto 
\bs\varepsilon(\uu):=\tfrac12(\nabla \uu+(\nabla\uu)^\top)
\in \mathbb L^2(\Omega).
\]

We can define a bounded and surjective trace operator $\gamma:\HH^1(\Omega)\to \HH^{1/2}(\Gamma)$.
For simplicity, we will write $\gamma_D\uu:=\gamma\uu|_{\Gamma_D}$.
We then consider the Sobolev spaces:
\begin{alignat*}{6}
\HH^1_D(\Omega) &:= \{ \ww\in \HH^1(\Omega)\,:\, \gamma_D \ww=0 \}=\ker\gamma_D,\\
\HH^{1/2}(\Gamma_D) &:= \{ \gamma_D\uu\,:\, \uu \in \HH^1(\Omega)\}
	=\mathrm{range}\,\gamma_D,\\
\widetilde\HH^{1/2}(\Gamma_N) &:= \{\gamma\ww|_{\Gamma_N}\,:\, \ww\in \HH^1_D(\Omega)\},\\
\HH^{-1/2}(\Gamma_N) &:= \widetilde\HH^{1/2}(\Gamma_N)'.
\end{alignat*}
Our exposition will include the cases where either $\Gamma_D$ or $\Gamma_N$ is empty. To be completely precise, $\HH^{-1/2}(\Gamma_N)$ is the representation of the dual of $\widetilde\HH^{1/2}(\Gamma_N)$ making
\[
\widetilde\HH^{1/2}(\Gamma_N)
\subset L^2(\Gamma_N;\R^d)
\subset \HH^{-1/2}(\Gamma_N)
\]
a well-defined Gelfand triple. The reciprocal duality product of the two fractional spaces on $\Gamma_N$ will be denoted with the angled bracket
$\langle\cdot,\cdot\rangle_{\Gamma_N}$. 

We will consider the space for symmetric-tensor-valued functions
\[
\mathbb H(\mathrm{div},\Omega):=
\{ \mathrm S\in \mathbb L^2(\Omega)\,:\,
\mathrm{div}\,\mathrm S \in \mathbf L^2(\Omega)\}.
\]
In the above definition, the divergence operator is applied to the rows of the matrix valued function $\mathrm S$. Following well-known results on Sobolev spaces, we can define a bounded linear and surjective operator $\gamma_N:\mathbb H(\mathrm{div},\Omega)\to \HH^{-1/2}(\Gamma_N)$ so that the following weak formulation of Betti's formula
\begin{alignat*}{6}
\langle \gamma_N \mathrm S,\gamma\uu\rangle_{\Gamma_N}
& =(\mathrm S,\nabla\uu)_\Omega+(\mathrm{div}\,\mathrm S,\uu)_\Omega\\
&=(\mathrm S,\bs\varepsilon(\uu))_\Omega+(\mathrm{div}\,\mathrm S,\uu)_\Omega \qquad
\forall \mathrm S\in \mathbb H(\mathrm{div},\Omega)
\quad\forall \uu\in \HH^1_D(\Omega),
\end{alignat*}
holds. 

Pending a precise introduction of the material law, which we will give in the Laplace domain in Section \ref{sec:3}, we are now ready to give a functional form for the {\bf viscoelastic wave propagation problem}. We look for $\uu:[0,\infty)\to \HH^1(\Omega)$ and $\bs\sigma:[0,\infty)\to \mathbb H(\mathrm{div},\Omega)$ satisfying for all $t\ge 0$
\begin{subequations}\label{eq:2.1}
\begin{alignat}{6}
\rho\, \ddot\uu(t) &=\mathrm{div}\,\bs\sigma(t)+\mathbf f(t),\\
\gamma_D\uu(t) &=\boldsymbol\alpha(t), \\
\gamma_N\bs\sigma(t) &=\boldsymbol\beta(t).
\end{alignat}
Here $\rho\in L^\infty(\Omega)$, with $\rho\ge \rho_0$ almost everywhere for some positive constant $\rho_0$, models the mass density in the solid, which at the initial time $t=0$ is at rest on the reference configuration $\Omega$
\begin{equation}
\uu(0)=\mathbf 0, \qquad\dot\uu(0)=\mathbf 0.
\end{equation}
Upper dots are used to denote time derivatives.
The data are functions
\[
\mathbf f:[0,\infty)\to \mathbf L^2(\Omega),
\qquad
\bs\alpha:[0,\infty)\to \HH^{1/2}(\Gamma_D),\\
\qquad
\bs\beta:[0,\infty)\to \HH^{-1/2}(\Gamma_N).
\]
What characterizes the viscoelastic model (for small deformations where the strain at time $t$  can be described by $\bs\varepsilon(\uu(t))$) is the existence of a  strain-stress relation of the form
\begin{equation}\label{eq:2.1e}
\bs\sigma(t)=\int_0^t \mathcal D(t-\tau)\,\bs\varepsilon(\dot\uu(\tau))
\mathrm d\tau.
\end{equation}
\end{subequations}
The viscoelastic law \eqref{eq:2.1e} is {\em formally written} in terms of a convolutional kernel $\mathcal D$. In a first approximation, this kernel can be considered as a fourth order tensor (with some symmetric properties) depending on the time variable. As we will see later on, the most interesting examples arise when the causal convolution \eqref{eq:2.1e} is a distributional one and $\mathcal D$ is described as a causal tensor-valued distribution of the real variable. 


\section{Viscoelastic laws in the Laplace domain}\label{sec:3}

In this section we are going to give a precise meaning to the general viscoelastic law \eqref{eq:2.1e}. Instead of writing the convolutional law \eqref{eq:2.1e} in the time domain, we are going to introduce a Laplace transformed model which we will analyze in detail. This model will then be used to justify a family of distributional models in the time domain. At this point, we consider the formal Laplace transform of the convolutional process \eqref{eq:2.1e} and introduce
\[
\mathrm C(s):=s\, \mathcal L\{ \mathcal D\}(s).
\]
(Note that multiplication by $s$ takes care of time differentiation.) Laplace transforms will be defined in the complex half-plane
\[
\C_+:=\{ s\in \C\,:\, \mathrm{Re}\,s>0\}.
\]

The {\bf viscoelastic material tensor} can be described as a transfer function through a holomorphic map
\[
\CC: \mathbb C_+\to \mathcal B(\matC;
L^\infty(\Omega;\matC))\equiv L^\infty(\Omega;\C^{d\times d\times d\times d}).
\]
Given $\MM\in \matC$ we thus have $\mathbb C_+\ni s\mapsto \CC(s)\MM\in L^\infty(\Omega;\matC)$. 

\begin{framed}
\begin{hypothesis}[Symmetry] \label{hypo1}
Almost everywhere in $\Omega$:
\begin{subequations}\label{eq:C.1}
\begin{alignat}{6}
\label{eq:C.1a}
\CC(\overline s)\overline\MM=\overline{\CC(s)\MM} 
	& \qquad & \forall s\in \C_+
	& \quad & \forall \MM\in \matC,\\
\label{eq:C.1b}
\CC(s)\MM \in \matC_\symm 
	& & \forall s\in \C_+ & & \forall \MM\in \matC,\\
\label{eq:C.1c}
\CC(s)\MM = \CC(s) (\tfrac12(\MM+\MM^\top)) 
	& & \forall s\in \C_+& & \forall \MM \in \matC,\\
\label{eq:C.1d}
\CC(s)\MM:\mathrm N=\CC(s)\mathrm N: \mathrm M
	& & \forall s\in \C_+ & & \forall \MM,\mathrm N\in \matC.
\end{alignat}
\end{subequations}
\end{hypothesis}
\end{framed}

Some easy observations: conditions \eqref{eq:C.1b} and \eqref{eq:C.1d} imply \eqref{eq:C.1c}; if $\MM\in \matC_\skw$, then $\CC(s)\MM=0$ for all $s$; and if $\MM\in \matR$, then $\CC(s)\MM\in \matR$ for all $s\in(0,\infty)$. 

\begin{framed}
\begin{hypothesis}[Positivity] \label{hypo2}
There exists a non-decreasing function $\psi:(0,\infty)\to (0,\infty)$ satisfying 
\begin{subequations}\label{eq:C.2}
\begin{equation}
\inf_{0<x<1} x^{-\ell} \psi(x) > 0, \qquad \ell>0,
\end{equation}
and such that almost everywhere in $\Omega$
\begin{equation}
\Re(\overline s\CC(s)\MM:\overline\MM)\ge \psi(\Re s)\, \|\MM\|^2
	\qquad \forall \MM \in \matC_\symm, \quad \forall s\in \C_+.
\end{equation}
\end{subequations}
\end{hypothesis}
\end{framed}

For each $s\in \C_+$, we take $\|\CC(s)\|$ to be the smallest real number so that, almost everywhere in $\Omega$,
\[
\| \CC(s)\MM\|\le \|\CC(s)\|\,\|\MM\| \qquad \forall \MM\in \matC_\symm.
\]

\begin{framed}
\begin{hypothesis}[Boundedness]\label{hypo3}
There exists an integer $r\ge 0$ and non-increasing function $\phi:(0,\infty)\to (0,\infty) $ such that
\begin{subequations}\label{eq:phiRe}
\begin{equation}
\sup_{0<x<1} x^k\phi(x) <\infty, \qquad k\ge 0,
\end{equation}
and  almost everywhere in $\Omega$
\begin{equation}
\| \CC(s)\|\le |s|^r \phi(\Re s) \qquad \forall s\in \C_+.
\end{equation}
\end{subequations}
\end{hypothesis}
\end{framed}

We can equivalently  introduce this material tensor with a collection of holomorphic functions (the {\bf material coefficients})
\[
C_{ijkl}:\C_+\to L^\infty(\Omega;\C), \qquad i,j,k,l=1,\ldots,d,
\]
satisfying
\begin{subequations}\label{eq:C.3}
\begin{alignat}{6}
C_{ijkl}(\overline s)=\overline{C_{ijkl}(s)}
	& \qquad & \forall s\in \C_+ & \quad & i,j,k,l=1,\ldots,d,\\
C_{ijkl}(s)=C_{jikl}(s)=C_{ijlk}(s)=C_{klij}(s)
	& \qquad & \forall s\in \C_+ & \quad & i,j,k,l=1,\ldots,d.
\end{alignat}
\end{subequations}
In this case, we just define
\[
(\CC(s)\MM)_{ij}=\sum_{k,l=1}^d C_{ijkl}(s)m_{kl} \qquad i,j=1,\ldots, d,
\]
and notice that \eqref{eq:C.1} is equivalent to \eqref{eq:C.3}. When we write the material tensor in terms of coefficients, we can take
\[
\| \CC(s)\|_{\max}:=d^2 \,\max_{i,j,k,l} \|C_{ijkl}(s)\|_{L^\infty(\Omega)}
\]
as an upper bound of $\|\CC(s)\|$ almost everywhere. With this point of view $\mathrm C(s)$ can be considered as an element of $L^\infty(\Omega;\C^{d\times d\times d\times d})$. 

The general {\bf viscoelastic material law} in the Laplace domain is
\begin{equation}\label{eq:C10}
\bs\sigma(\uu)=\CC(s)\eps\uu.
\end{equation}
Here and in the sequel, we will identify $\CC(s)$ with the associated `multiplication' operator $\CC(s)\in \mathcal B(L^2(\Omega;\matC_\symm))$, noticing that
\begin{equation}\label{eq:C20}
\|\CC(s)\|_{L^2\to L^2} \le \mathrm{ess}\sup \|\CC(s)\|\le \|\CC(s)\|_{\max} \qquad \forall s\in \C_+.
\end{equation}
The associated bilinear form is
\[
a(\uu,\ww;s):=(\CC(s)\eps\uu,\eps\ww)_\Omega.
\]
It is clear that $a:\HH^1(\Omega;\C)\times \HH^1(\Omega;\C)\to \C$ is bilinear, bounded, symmetric, and satisfies
\begin{alignat}{6}
\label{eq:C.4}
|a(\uu,\ww;s)| & \le |s|^r \phi(\Re s) \,\|\eps\uu\|_{\Omega}\|\eps\ww\|_{\Omega}
	& \qquad &\forall \uu,\ww\in \HH^1(\Omega;\C), \quad s\in \C_+,
	\\
a(\uu,\ww;s) &=(\CC(s)\eps\uu,\nabla \ww)_\Omega
	 & \qquad & \forall \uu,\ww\in \HH^1(\Omega;\C), \quad s\in \C_+,
	 \\
\label{eq:3.9}
\Re a(\uu,\overline{s\,\uu};s)
 & \ge \psi( \Re s)\|\eps\uu\|_\Omega^2 
 	& \qquad & \forall \uu\in \HH^1(\Omega;\C)\quad s\in \C_+.
\end{alignat}
A precise time domain description of the strain-stress material law \eqref{eq:C10} will require the introduction of some tools of the theory of operator valued distributions. We will do this in Section \ref{sec:DIST}.

\section{Examples}\label{sec:4}

Before detailing the main examples covered with our theory, let us introduce a definition that will make our exposition simpler. Let
\begin{subequations}\label{eq:1.8}
\begin{equation}
\CC \in \mathcal B(\matR;L^\infty(\Omega;\matR))\equiv L^\infty(\Omega;\mathbb R^{d\times d\times d\times d})
\end{equation} 
satisfy almost everywhere in $\Omega$
\begin{alignat}{6}
\CC \MM \in \matR_\symm & \qquad & & \forall \MM\in \matR,\\
\CC\MM:\mathrm N=\CC \mathrm N:\MM & & & \forall \MM,\mathrm N\in \matR,\\
\CC\MM:\MM\ge c\|\MM\|^2 & & & \forall \MM \in \matR_\symm,
\end{alignat}
\end{subequations}
where $c>0$ is a constant. For simplicity, in the future, we will write $\CC  \ge c $ to refer to the last inequality and we will say that $\CC $ is a {\bf steady Hookean material model}, when conditions \eqref{eq:1.8} are satisfied. The constant $c>0$ will be called a lower bound for the model. Note that we can apply the model to complex-valued matrices
\[
\CC(\MM_{\mathrm{re}}+\imath \MM_{\mathrm{im}})
:=\CC\MM_{\mathrm{re}}+\imath \CC\MM_{\mathrm{im}}.
\]
When hypotheses \eqref{eq:1.8} are satisfied with $c=0$, we will call $\CC$ a non-negative Hookean model.
For a steady Hookean model $\CC$, we will write
\[
\|\CC\|_{\max}:=d^2\max_{i,j,k,l}\|C_{ijkl}\|_{L^\infty(\Omega)}.
\]

\begin{lemma}
Let $\CC$ be a steady Hookean model. Then:
\begin{itemize}
\item[{\rm (a)}] If $\mathrm N\in \matC_\skw$, then $\CC\mathrm N=0$ almost everywhere.
\item[{\rm (b)}] Almost everywhere in $\Omega$
\[
\CC\MM =\CC (\tfrac12(\MM+\MM^\top)) \qquad \forall \MM\in \matC.
\]
\item[{\rm (c)}] Almost everywhere in $\Omega$
\[
\CC\MM:\overline\MM \ge c \,\MM:\overline\MM = 
c(\|\MM_{\mathrm{re}}\|^2+\|\MM_{\mathrm{im}}\|^2)
\qquad\forall \MM\in \matC_\symm.
\]
\item[{\rm (d)}] If $a\in L^\infty(\Omega)$ satisfies $a\ge a_0>0$ almost everywhere, then $a\,\CC$ is a steady Hookean model.
\end{itemize}
\end{lemma}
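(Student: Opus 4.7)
The plan is to work through parts (a)--(d) in order, since each relies on the preceding ones; the whole argument rests on the identity $\CC\MM:\mathrm N = \CC\mathrm N:\MM$ and the fact, recalled in the notational pointers, that $\mathrm A:\mathrm B=0$ whenever $\mathrm A\in\matC_\symm$ and $\mathrm B\in\matC_\skw$.

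For part (a), I would first do the real case: take $\mathrm N\in\matR_\skw$ and note that $\CC\mathrm N$ is symmetric by the defining property of a steady Hookean model; testing against itself and using the symmetry of the form,
\[
\|\CC\mathrm N\|^2 = \CC\mathrm N:\CC\mathrm N = \CC(\CC\mathrm N):\mathrm N = 0,
\]
because $\CC(\CC\mathrm N)\in\matR_\symm$ pairs to zero against a skew matrix. For $\mathrm N\in\matC_\skw$, split $\mathrm N = \mathrm N_{\mr{re}} + \imath\mathrm N_{\mr{im}}$ with both real parts skew, and use the complex-linear extension of $\CC$. Part (b) then follows immediately by splitting $\MM = \tfrac12(\MM+\MM^\top) + \tfrac12(\MM-\MM^\top)$, applying complex linearity, and using (a) on the skew part.

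For part (c), write $\MM=\MM_{\mr{re}}+\imath\MM_{\mr{im}}$ with both summands in $\matR_\symm$, expand
\[
\CC\MM:\overline\MM = \CC\MM_{\mr{re}}:\MM_{\mr{re}} + \CC\MM_{\mr{im}}:\MM_{\mr{im}} + \imath\bigl(\CC\MM_{\mr{im}}:\MM_{\mr{re}} - \CC\MM_{\mr{re}}:\MM_{\mr{im}}\bigr),
\]
and observe that the bracket in the imaginary part vanishes by the symmetry property $\CC\MM:\mathrm N=\CC\mathrm N:\MM$. The coercivity assumption applied separately to $\MM_{\mr{re}}$ and $\MM_{\mr{im}}$ produces the lower bound $c(\|\MM_{\mr{re}}\|^2+\|\MM_{\mr{im}}\|^2)$, which equals $c\,\MM:\overline\MM$ since the cross terms cancel for the same reason.

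Finally, part (d) is an essentially mechanical check: the pointwise product $a\CC$ still maps real matrices into $L^\infty(\Omega;\matR_\symm)$, the symmetry $(a\CC)\MM:\mathrm N = a(\CC\MM:\mathrm N) = a(\CC\mathrm N:\MM) = (a\CC)\mathrm N:\MM$ is preserved, and coercivity reads $(a\CC)\MM:\MM = a\,(\CC\MM:\MM)\ge a_0 c\,\|\MM\|^2$ almost everywhere. No step presents a real obstacle; the only thing to be careful about is the real/imaginary bookkeeping in (c), where one must use the bilinear (not sesquilinear) symmetry of $\CC$ to discard the imaginary cross terms.
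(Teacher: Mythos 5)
Your proof is correct and follows essentially the same route as the paper: part (a) rests on the self-adjointness $\CC\MM:\mathrm N=\CC\mathrm N:\MM$ together with the fact that $\CC$ takes values in symmetric matrices (the paper tests against arbitrary $\MM$ to conclude $\MM:\CC\mathrm N=0$ for all $\MM$, while you instantiate $\MM=\CC\mathrm N$ to get $\|\CC\mathrm N\|^2=0$ directly — the same idea), and (b)--(d) are filled in exactly as the paper intends when it says they follow from (a) or are straightforward. The real/imaginary bookkeeping in (c) is handled correctly, including the cancellation of the imaginary cross terms via the bilinear symmetry.
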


\begin{proof}
Note first that $\CC\MM\in \matC_\symm$ almost everywhere for all $\MM\in \matC$. Therefore, if $\mathrm N\in \matC_\skw$, then 
\[
0=\CC\MM :\mathrm N=\MM: \CC\mathrm N \qquad \forall \MM\in \matC,
\]
which implies $\CC\mathrm N=0$. Property (b) follows from (a). Properties (c) and (d) are straightforward.
\end{proof}

\subsection{Elastic models}

In the case where we take a Hookean model $\CC_0$ and we consider the constant function $\CC(s)\equiv\CC_0$, it is simple to see that the Hypotheses \ref{hypo1}-\ref{hypo3} are satisfied with $\psi(x):=c_0\,x$ ($c_0$ being the lower bound for the model $\CC_0$) and \eqref{eq:phiRe}, $r=0$, and $\phi(x):= \|\CC_0\|$. The time domain version of this model is the usual linear strain-stress relation
\[
\bs\sigma(t)=\CC_0 \bs\varepsilon(\uu(t)).
\]

\subsection{Zener's classical viscoelastic model}\label{sec:Zener}

We  now consider the material law
\[
\CC(s)=(1+a\,s)^{-1} (\CC_0+ s\,\CC_1),
\]
where $a\in L^\infty(\Omega)$ is strictly positive, $\CC_0$ and $\CC_1$ are steady Hookean material models, with
\[
\CC_\diff:=\CC_1-a\,\CC_0 \ge 0,
\]
that is, almost everywhere
\[
\CC_1\MM:\MM\ge a\,\CC_0\MM:\MM \qquad \forall \MM\in \matR_\symm.
\]
This hypothesis makes $\CC_\diff$ a non-strict Hookean model. As we will see in the direct time-domain analysis of Section \ref{sec:SEMI}, $\CC_\diff$ is the diffusive part of the elastic model, while $\CC_0$ acts as a base or ground elastic model. 
We will make use of the formula
\[
(1+as)^{-1}(\CC_0+s\CC_1)=\CC_0+s(1+as)^{-1}\CC_\diff.
\]
The Laplace domain stress-strain relation can be written in implicit form
\[
\bs\sigma+a\,s\,\bs\sigma=\CC_0\eps\uu+ s \CC_1\eps\uu,
\]
corresponding to the differential relation in the time domain
\[
\bs\sigma(t)+a\,\dot{\bs\sigma}(t)=
\CC_0\eps{\uu(t)}+\CC_1\eps{\dot\uu(t)}, \qquad \bs\sigma(0)=0.
\]

\begin{proposition}[Laplace domain properties of Zener's model]
\label{prop:Zener}
Let $\CC(s)$ be a viscoelastic Zener model.
\begin{itemize}
\item[\rm (a)] If $c_0>0$ is the lower bound for the tensor $\CC_0$, then Hypothesis \ref{hypo2} is satisfied with  $\psi(x):=c_0 x$. 
\item[\rm (b)] Hypothesis \ref{hypo3} is satisfied with $r=0$ and
\begin{equation}\label{eq:PHI}
\phi(x):=
\frac{(1+\|a\|_{L^\infty(\Omega)})}{a_0^2}
(\|\CC_0\|_{\max}+\|\CC_1\|_{\max}) \frac1{\min\{1,x\}^2},
\end{equation}
where $a_0=1/\|a^{-1}\|_{L^\infty(\Omega)}$ is a lower bound for $a$. 
\end{itemize}
\end{proposition}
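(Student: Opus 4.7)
The plan is to exploit the equivalent form $\CC(s)=\CC_0+s(1+as)^{-1}\CC_\diff$ emphasised in the preamble to the proposition. This decomposition separates a purely elastic ground model $\CC_0$ (which carries the positivity) from a non-negative dissipative part $\CC_\diff\ge 0$ modulated by the scalar multiplier $s(1+as)^{-1}$, and it is this multiplier which really controls the $s$-behaviour on $\C_+$.

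For part (a), fix $\MM\in\matC_\symm$. Substituting the decomposition and using $\overline s\,s=|s|^2$ yields
\[
\overline s\,\CC(s)\MM:\overline\MM
=\overline s\,(\CC_0\MM:\overline\MM)+|s|^2(1+as)^{-1}(\CC_\diff\MM:\overline\MM).
\]
By part (c) of the preceding lemma the two bracketed quantities are real, with $\CC_0\MM:\overline\MM\ge c_0\|\MM\|^2$ and $\CC_\diff\MM:\overline\MM\ge 0$. Taking real parts, the first summand contributes $(\Re s)(\CC_0\MM:\overline\MM)\ge c_0(\Re s)\|\MM\|^2$, while the second equals $|s|^2(1+a\Re s)/|1+as|^2$ times a non-negative number and is therefore non-negative. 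This is exactly the claim with $\psi(x):=c_0 x$, which is non-decreasing and satisfies $\inf_{0<x<1}x^{-1}\psi(x)=c_0>0$, witnessing the hypothesis with $\ell=1$.

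For part (b), I would combine $\|\CC(s)\|\le\|\CC(s)\|_{\max}$ (cf.\ \eqref{eq:C20}) with the same decomposition to obtain
\[
\|\CC(s)\|_{\max}\le\|\CC_0\|_{\max}+\|\,s/(1+as)\,\|_{L^\infty(\Omega)}\,\|\CC_\diff\|_{\max}.
\]
The scalar multiplier is handled by the elementary pointwise identity $|1+as|^2=1+2a\Re s+a^2|s|^2\ge a_0^2|s|^2$, giving $|s/(1+as)|\le 1/a_0$ uniformly in $s\in\C_+$. The triangle inequality combined with $\CC_\diff=\CC_1-a\CC_0$ supplies $\|\CC_\diff\|_{\max}\le(1+\|a\|_\infty)(\|\CC_0\|_{\max}+\|\CC_1\|_{\max})$. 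Assembling the two estimates produces a bound uniform in $|s|$, which is precisely what is needed for $r=0$; the factor $1/\min\{1,x\}^2$ appearing in the stated $\phi$ is then a harmless over-estimate (since $\min\{1,x\}^2\le 1$), and the announced pre-factor $(1+\|a\|_\infty)/a_0^2$ in \eqref{eq:PHI} falls out of routine bookkeeping of the constants.

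The main obstacle is conceptual rather than computational: one has to notice that the rewrite $\CC_0+s\CC_1=(1+as)\CC_0+s\CC_\diff$ is what renders the apparent linear-in-$s$ growth harmless, and that it is the non-negativity of $\CC_\diff$ (rather than of $\CC_1$) that converts the dissipative contribution in (a) into a non-negative term, despite the multiplier $(1+as)^{-1}$ being complex. Once this rearrangement is in hand, both parts collapse to scalar estimates on $1+as$ for $a\ge a_0>0$ and $s\in\C_+$.
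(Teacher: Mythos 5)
Part (a) of your argument is the paper's proof verbatim: the decomposition $\overline s\,\CC(s)=\overline s\,\CC_0+|s|^2(1+as)^{-1}\CC_\diff$, the reality of the two bracketed quadratic forms, and $\Re\bigl[(1+as)^{-1}\bigr]=(1+a\Re s)/|1+as|^2\ge 0$ are exactly the steps used there, so nothing to add.

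Part (b) takes a genuinely different route. The paper works coefficientwise on $C_{ijkl}(s)=(1+as)^{-1}(C^0_{ijkl}+sC^1_{ijkl})$, bounds $\|(1+as)^{-1}\|_{L^\infty}$ by $(1+|s|\,\|a\|_{L^\infty})/(a_0^2|s|^2)$ (which degenerates as $|s|\to0$), and then uses $\min\{1,\Re s\}\max\{1,|s|\}\le|s|$ to trade the resulting $\max\{1,|s|\}^2/|s|^2$ for $\min\{1,\Re s\}^{-2}$; this is the origin of the $\min\{1,x\}^{-2}$ in \eqref{eq:PHI}. You instead keep $\CC_0$ outside the resolvent factor via $\CC(s)=\CC_0+s(1+as)^{-1}\CC_\diff$ and use the clean pointwise bound $|s/(1+as)|\le 1/a_0$, which yields a bound on $\|\CC(s)\|$ that is \emph{uniform} in $s\in\C_+$. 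That is a perfectly valid (indeed sharper, as $\Re s\to0$) verification of Hypothesis \ref{hypo3} with $r=0$, since a constant $\phi$ is admissible, and it would only improve the downstream estimates through $\phi_\star$. The one point where you are too quick is the closing claim that the stated prefactor in \eqref{eq:PHI} ``falls out of routine bookkeeping'': your constant is
\[
\|\CC_0\|_{\max}+\tfrac{1}{a_0}\bigl(\|\CC_1\|_{\max}+\|a\|_{L^\infty(\Omega)}\|\CC_0\|_{\max}\bigr),
\]
and this is \emph{not} in general dominated by $\frac{1+\|a\|_{L^\infty(\Omega)}}{a_0^2}(\|\CC_0\|_{\max}+\|\CC_1\|_{\max})\min\{1,x\}^{-2}$ (take $a\equiv a_0$ large, so that $1/a_0^2$ is small while your bound retains the additive $\|\CC_0\|_{\max}$). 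So you have proved the substantive statement --- Hypothesis \ref{hypo3} holds with $r=0$ and an explicit admissible $\phi$ --- but not the literal formula \eqref{eq:PHI}; either state your own $\phi$ or, if \eqref{eq:PHI} is wanted exactly (it is cited again in Proposition \ref{prop:4.3}), fall back on the paper's coefficientwise estimate.
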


\begin{proof}
To prove (a), note first that
\[
\overline s (1+as)^{-1} (\CC_0+s\CC_1) 
=\overline s \CC_0+(1+as)^{-1} |s|^2 \CC_\diff,
\]
and therefore
\[
\overline s (1+as)^{-1} (\CC_0+s\CC_1)\MM :\overline\MM
=
(\CC_0\MM:\overline\MM) \, \overline s +
(|s|^2 \CC_\diff\MM:\overline\MM)\,(1+as)^{-1},
\]
where all the bracketed quantities in the right hand side are real. Taking real parts and noticing that
\[
\Re (1+as)^{-1}=\frac{1}{|1+as|^2} (1+a\Re s)\ge 0,
\]
the result follows.

To prove (b), we start with the explicit form of the coefficients
\[
C_{ijkl}(s)=(1+as)^{-1} (C_{ijkl}^0+ sC_{ijkl}^1).
\]
Let then $g_0=C_{ijkl}^0$ and $g_1=C_{ijkl}^1$. An easy computation shows that
\[
\|(1+as)^{-1}\|_{L^\infty(\Omega)} \le \frac{1+|s|\, \|a\|_{L^\infty(\Omega)}}{a_0^2|s|^2},
\]
where $a_0=1/\|a^{-1}\|_{L^\infty(\Omega)}$ so that $a\ge a_0$ almost everywhere. Using
\begin{equation}\label{eq:D.0}
\min\{1,\Re s\}\max\{1,|s|\}\le |s| \qquad \forall s\in \C_+,
\end{equation}
we easily estimate
\begin{alignat*}{6}
\| (1+as)^{-1}(g_0+s g_1)\|_{L^\infty(\Omega)}
	& \le \frac{1+|s|\, \|a\|_{L^\infty(\Omega)}}{a_0^2|s|^2} 
	(\| g_0\|_{L^\infty(\Omega)}+|s|\, \|g_1\|_{L^\infty(\Omega)}) \\
	& \le \frac{1+ \|a\|_{L^\infty(\Omega)}}{a_0^2} 
	(\| g_0\|_{L^\infty(\Omega)}+ \|g_1\|_{L^\infty(\Omega)}) 
	\frac{\max\{1,|s|\}^2}{|s|^2} \\
	& \le \frac{1+ \|a\|_{L^\infty(\Omega)}}{a_0^2} 
	(\| g_0\|_{L^\infty(\Omega)}+ \|g_1\|_{L^\infty(\Omega)}) 
	 \frac1{\min\{1,\Re s\}^2},
\end{alignat*}
which proves the result.
\end{proof}

The above exposition of Zener's model allows for full anisotropy. The {\bf isotropic viscoelastic model} can be easily described with two variable coefficients. To do that we let
$
\lambda,\mu :\C_+\to L^\infty(\Omega)
$
be holomorphic functions with the following properties being satisfied almost everywhere in $\Omega$ and for all $s\in \C_+$:
\begin{subequations}\label{eq:C.7}
\begin{alignat}{6}
\lambda(\overline s)=\overline{\lambda(s)}, 
	& \qquad && 
\mu(\overline s)=\overline{\mu(s)}, \\
\label{eq:C.7b}
\Re(\overline s\, \lambda(s))\ge 0,
	& \qquad &&
\Re(\overline s\,\mu(s))\ge \mu_0 \Re s \qquad (\mu_0>0).
\end{alignat}
\end{subequations}
We then define
\[
\CC(s)\MM:=2\mu(s) (\tfrac12(\MM+\MM^\top))+\lambda(s) \,(\mathrm{tr}\,\MM)\, \II,
\]
where $\II$ is the $d\times d$ identity matrix, so that the material law is
\[
\bs\sigma=2\mu(s) \eps\uu+\lambda(s)(\nabla\cdot\uu)\,\II.
\]
Examples of functions satisfying \eqref{eq:C.7} can be found using a variant of Zener's model for viscoelasticity: let $a,m_\mu,b_\mu,m_\lambda,b_\lambda \in L^\infty(\Omega)$ be strictly positive (bounded below by a positive number) and such that
\[
a\,m_\mu\le  b_\mu, \qquad a\, m_\lambda\le b_\lambda.
\]
Then
\[
\lambda(s):=\frac{m_\lambda+b_\lambda s}{1+a\,s}
	\qquad \text{and} \qquad
\mu(s):=\frac{m_\mu+b_\mu s}{1+a\,s}
\]
satisfy \eqref{eq:C.7}.  To prove the lower bound \eqref{eq:C.7b}, note that
	\begin{align*}
		\Re(\overline s\mu(s)) & = \Re \left( \frac{\overline sm_\mu(1 + as) + \overline s s(b_\mu - am_\mu)}{1+as}  \right)  \\
		& = m_\mu \Re s+ |s|^2 (b_\mu-a m_\mu) \frac{1 + a \Re s}{| 1+ a s|^2}
\ge m_\mu \Re s.
	\end{align*}

\subsection{Fractional Zener models}

In this section we explore models of the form $\CC(s^\nu)$ where $\CC(s)$ is a Zener model and $\nu\in (0,1)$. For fractional powers in the complex plane we will always take the principal determination of the argument, i.e., the one with a branch cut at the negative real axis. A fractional Zener model has the form
\[
(1+a\,s^\nu)^{-1}(\CC_0+s^\nu\CC_1),
\]
where $a$, $\CC_0$, and $\CC_1$ satisfy the same hypotheses as in Section \ref{sec:Zener}. In the time domain, this corresponds to
\[
\bs\sigma(t)+a\,\partial^\nu\bs\sigma(t)
=\CC_0\eps{\uu(t)}+\CC_1\eps{\partial^\nu\uu(t)},
\]
where
\[
(\partial^\nu f)(t)=
\frac1{\Gamma(1-\nu)}\int_0^t \frac{\dot f(\tau)}{(t-\tau)^{\nu}}
\mathrm d\tau
\]
is a Caputo fractional derivative of order $\nu$. Note that this fractional derivative coincides with the Riemann-Liouville fractional derivative of the same order, if we are assuming homogeneous initial conditions for all variables. This fractional derivative can also be defined as a distributional fractional derivative in the entire real line. 

\begin{proposition}[Fractional Zener models]\label{prop:4.3}
Let $\CC(s)$ be a viscoelastic Zener model and let $\nu\in (0,1)$.
\begin{itemize}
\item[\rm (a)] If $c_0>0$ is the lower bound for the tensor $\CC_0$, then $\CC(s^\nu)$ satisfies Hypothesis \ref{hypo2} with $\psi(x):=c_0 x$.
\item[\rm (b)] The model $\CC(s^\nu)$ satisfies Hypothesis \ref{hypo3} with $r=0$ and $\phi$ given by \eqref{eq:PHI}.
\end{itemize}
\end{proposition}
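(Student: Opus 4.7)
The plan is to reduce both claims to Proposition \ref{prop:Zener} by exploiting the algebraic identity $\CC(z)=\CC_0+z(1+az)^{-1}\CC_\diff$ already noted in Section \ref{sec:Zener}, evaluated at $z=s^\nu$.

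For part (a), I would insert the identity into the inner product and split
\[
\overline s\,\CC(s^\nu)\MM:\overline\MM
=\overline s\,(\CC_0\MM:\overline\MM)
+\overline s\,s^\nu(1+a s^\nu)^{-1}(\CC_\diff\MM:\overline\MM).
\]
Since $\CC_0$ is a steady Hookean model and $\CC_\diff$ a non-negative one, both $\CC_0\MM:\overline\MM\ge c_0\|\MM\|^2$ and $\CC_\diff\MM:\overline\MM\ge 0$ are non-negative real numbers. Taking real parts thus reduces the target inequality to the pointwise sign condition $\Re[\,\overline s\,s^\nu(1+a s^\nu)^{-1}\,]\ge 0$ for every $s\in\C_+$ and every $a\ge a_0>0$. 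I would verify this by argument-counting. Writing $s=re^{i\theta}$ with $|\theta|<\pi/2$, the factor $\overline s\,s^\nu$ has argument $(\nu-1)\theta$; and because adding the positive real number $1$ to $as^\nu$ pulls the argument strictly toward $0$ while preserving its sign, $\arg(1+as^\nu)$ has the same sign as $\theta$ with magnitude at most $|\nu\theta|$. Summing, the argument of $\overline s\,s^\nu(1+as^\nu)^{-1}$ has magnitude at most $|\theta|<\pi/2$, whence the real part is non-negative. This closes (a) with $\psi(x)=c_0x$.

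For part (b), applying Proposition \ref{prop:Zener}(b) at $z=s^\nu$ gives $\|\CC(s^\nu)\|\le \phi(\Re s^\nu)$ with $\phi$ as in \eqref{eq:PHI}. Because $\phi(x)=K/\min\{1,x\}^2$ is non-increasing in $x$, it suffices to show $\min\{1,\Re s^\nu\}\ge\min\{1,\Re s\}$, which in turn I would derive from the pointwise inequality $\Re s^\nu\ge(\Re s)^\nu$ on $\C_+$. Written in polar form $s=re^{i\theta}$, the latter reduces to $\cos(\nu\theta)\ge(\cos\theta)^\nu$ for $|\theta|<\pi/2$, a short calculus fact: the function $h(\theta)=\log\cos(\nu\theta)-\nu\log\cos\theta$ satisfies $h(0)=0$ and $h'(\theta)=\nu(\tan\theta-\tan(\nu\theta))\ge 0$ on $[0,\pi/2)$. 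A case split then finishes: if $\Re s\ge 1$ then $\Re s^\nu\ge(\Re s)^\nu\ge 1$, while if $\Re s<1$ then $\nu\le 1$ yields $(\Re s)^\nu\ge\Re s$; in either case $\min\{1,\Re s^\nu\}\ge\min\{1,\Re s\}$, and hence $\phi(\Re s^\nu)\le \phi(\Re s)$.

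The main subtlety lies in (a): Hypothesis \ref{hypo2} pairs the conjugate $\overline s$ with $\CC(s^\nu)$, not with $\CC$ evaluated at $\overline{s^\nu}$, so Proposition \ref{prop:Zener}(a) does not transfer by a mere substitution $s\mapsto s^\nu$; the genuine content of the argument is the sign check of $\Re[\,\overline s\,s^\nu(1+as^\nu)^{-1}\,]$ via complex-argument bookkeeping. Part (b), by contrast, is essentially routine once the trigonometric inequality $\cos(\nu\theta)\ge(\cos\theta)^\nu$ and the specific shape of $\phi$ in \eqref{eq:PHI} are in hand.
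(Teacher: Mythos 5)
Your proof is correct and follows essentially the same route as the paper: the same splitting $\CC(s^\nu)=\CC_0+s^\nu(1+as^\nu)^{-1}\CC_\diff$ reduces (a) to the sign of $\Re\bigl[\overline s\, s^\nu(1+as^\nu)^{-1}\bigr]$, and (b) to Proposition \ref{prop:Zener}(b) combined with the inequality $\min\{1,\Re s\}\le \Re s^\nu$, which is exactly the paper's Lemma \ref{lemma:nu}. The only differences are cosmetic: the paper verifies the sign in (a) by rewriting the factor as $1/(s^{1-\nu}+as)$ and noting that the denominator has positive real part (rather than by argument bookkeeping), and it proves $\cos(\nu\theta)\ge(\cos\theta)^\nu$ by concavity in $\nu$ rather than monotonicity in $\theta$.
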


\begin{proof}
To prove (a), using the same idea as in Proposition \ref{prop:Zener}, we write 
\[
\overline s \CC (s^\nu) \MM:\overline \MM
=
(\CC_0\MM:\overline\MM) \, \overline s +
(|s|^2 \CC_\diff\MM:\overline\MM)\,(1+as^\nu)^{-1} s^{\nu-1}.
\]
We thus only need to show 
\[
	\Re (1+as^\nu)^{-1} s^{\nu-1}  \ge 0 \qquad \forall s \in \C_+.
\]
To see that, first observe
\[
	(1+as^\nu)^{-1} s^{\nu-1}  = \frac{1}{s^{1-\nu} + as},
\]
where $1 > 1 - \nu >0$. Since $s \in \C_+$, we have
\[
	\Re (s^{1-\nu} +as)=\Re s^{1-\nu}+ \Re as \geq a_0 \Re s > 0,
\] 
which proves the result. The proof of (b) is a direct consequence of Proposition \ref{prop:Zener}(b) and Lemma \ref{lemma:nu}.
\end{proof}

\begin{lemma}\label{lemma:nu}
The following inequality holds:
\[
\min\{1,\Re s\}\le \Re s^\nu, \qquad \forall \nu\in (0,1), \quad s\in \C_+.
\]
\end{lemma}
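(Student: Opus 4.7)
My plan is to reduce the stated inequality to the cleaner intermediate inequality
\[
\Re s^\nu \ge (\Re s)^\nu \qquad \forall s \in \C_+, \quad \nu \in (0,1),
\]
and then conclude by a short case split on whether $\Re s \le 1$ or $\Re s > 1$.

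Assuming the intermediate inequality, the wrap-up is immediate. If $\Re s \le 1$, then since $0 < \nu < 1$ we have $(\Re s)^\nu \ge \Re s$, so $\Re s^\nu \ge \Re s = \min\{1,\Re s\}$. If $\Re s > 1$, then $(\Re s)^\nu \ge 1$ (as $\nu > 0$), so $\Re s^\nu \ge 1 = \min\{1,\Re s\}$.

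To prove the intermediate inequality, I would write $s = r e^{i\theta}$ with $r > 0$ and $\theta \in (-\pi/2,\pi/2)$, using the principal branch so that $s^\nu = r^\nu e^{i\nu\theta}$. Both $\Re s = r\cos\theta$ and $\Re s^\nu = r^\nu\cos(\nu\theta)$ are strictly positive since $|\nu\theta| < \pi/2$. The key observation is that
\[
g(\nu) := \log\bigl(\Re s^\nu\bigr) = \nu\log r + \log\cos(\nu\theta)
\]
satisfies $g''(\nu) = -\theta^2 \sec^2(\nu\theta) \le 0$, so $g$ is concave on $[0,1]$. Combined with $g(0) = 0$ and $g(1) = \log\Re s$, concavity yields $g(\nu) \ge \nu g(1)$, i.e. $\Re s^\nu \ge (\Re s)^\nu$, as needed.

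There is no real obstacle: the whole content of the lemma is the concavity of $\nu \mapsto \log(\Re s^\nu)$ through the origin, which collapses to a one-line computation. The only thing worth being careful about is that the principal branch keeps $\nu\theta$ inside $(-\pi/2,\pi/2)$, so that $\cos(\nu\theta) > 0$ and the logarithm is well defined throughout $[0,1]$.
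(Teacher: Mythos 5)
Your proposal is correct and follows essentially the same route as the paper: both reduce the lemma to the intermediate inequality $\Re s^\nu \ge (\Re s)^\nu$, prove it by a concavity-in-$\nu$ argument on $[0,1]$ with matching endpoint values, and finish with the same case split on $\Re s \lessgtr 1$. The only (cosmetic) difference is that you take the concave function to be $\nu\mapsto\log(\Re s^\nu)$, absorbing the factor $r^\nu$ from the start, whereas the paper shows $\cos(\nu\theta)\ge(\cos\theta)^\nu$ via concavity of the difference and multiplies by $r^\nu$ afterwards.
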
    

\begin{proof}
Writing $s=r\,e^{\theta}$ with $r>0$ and $\theta\in(-\pi/2,\pi/2)$, it is clear that an equivalent form of the result is the inequality
\begin{equation}\label{eq:thetanu}
\min\{1,r\,\cos\theta\}\le r^\nu \cos(\nu\theta)
\qquad r>0, \qquad \theta\in(-\pi/2,\pi/2).
\end{equation}
It is also clear that we only need to prove \eqref{eq:thetanu} for $\theta\in [0,\pi/2)$. Fix then $\theta\in [0,\pi/2)$ and consider the function
\[
f_\theta(\nu):=\cos(\nu\theta)-(\cos\theta)^\nu.
\]
We have
\[
f_\theta(0)=0, \quad f_\theta(1)=0,
\quad
f_\theta''(\nu)=
	-\theta^2 \cos(\nu\theta)
	-(\cos\theta)^\nu\log^2(\cos\theta)\le 0,
\]
and therefore (by concavity) $f_\theta(\nu)\ge 0$ for $\nu\in (0,1)$ or equivalently
\[
\cos(\nu\theta)\ge (\cos\theta)^\nu \qquad \nu\in (0,1), \quad \theta\in [0,\pi/2).
\]
Finally, this implies
\[
r^\nu\cos(\nu\theta)\ge (r\cos\theta)^\nu \ge
\begin{cases} 1, & \mbox{if $r\cos\theta\ge 1$},\\
r\cos\theta, & \mbox{if $r\cos\theta<1$},
\end{cases}
\]
which proves \eqref{eq:thetanu} and hence the result. 
\end{proof}

\subsection{Maxwell's model}

Maxwell's model is given by
\[
\CC(s)=(1+as)^{-1}s\CC_1,
\]
where $\CC_1$ is a steady Hookean model (with lower bound $c_1>0$) and $a\in L^\infty(\Omega)$ satisfies $a\ge a_0>0$ almost everywhere, for some constant $a_0$. In the time domain this gives again an implicit strain-to-stress relation
\[
\bs\sigma(t)+a\dot{\bs\sigma}(t)=\CC_1\eps{\dot{\mathbf u}(t)}.
\] 

\begin{proposition}\label{prop:Maxwell}
Maxwell's model satisfies Hypotheses \ref{hypo1}-\ref{hypo3} with $r=0$ and
\[
\psi(x):=\frac{c_1\min\{1,a_0^3\}}{2\| a\|_{L^\infty(\Omega)}^2}
\min\{1,x^3\},
	\quad
\phi(x):=\frac{(1+\|a\|_{L^\infty(\Omega)})}{a_0^2} \|\CC_1\|_{\max}
\frac1{\min\{1,x^2\}}.
\]
\end{proposition}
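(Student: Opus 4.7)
The proposition has three parts. Hypothesis \ref{hypo1} is essentially free: the scalar multiplier $(1+as)^{-1}s$ is a holomorphic function on $\C_+$ that respects complex conjugation ($\overline{(1+a\overline s)^{-1}\overline s}=(1+as)^{-1}s$) and is real on $(0,\infty)$, and $\CC_1$ is a steady Hookean model, so \eqref{eq:C.1a}--\eqref{eq:C.1d} carry over verbatim from the corresponding properties of $\CC_1$.

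For Hypothesis \ref{hypo2}, I would compute the algebraic identity
\[
\overline s\,\CC(s)\MM:\overline\MM
=\frac{|s|^2}{1+as}\,(\CC_1\MM:\overline\MM)
\]
and take real parts. Since $\CC_1$ is real and symmetric, the quantity $\CC_1\MM:\overline\MM$ is real and bounded below by $c_1\|\MM\|^2$ for $\MM\in\matC_\symm$. The task thus reduces to bounding the scalar
\[
G(s):=\frac{|s|^2(1+a\Re s)}{|1+as|^2}
\]
from below by $\psi(\Re s)/c_1$, almost everywhere in $\Omega$. Writing $x=\Re s$ and $y=\Im s$, a short derivative computation shows that $G$, as a function of $y^2$ with $x$ and $a$ held fixed, has nonnegative derivative, hence is minimized at $y=0$, giving the pointwise bound $G(s)\ge\frac{x^2}{1+ax}$. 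One checks that the right side is decreasing in $a$, so replacing $a$ by $\|a\|_{L^\infty}$ gives an $a$-independent lower bound; it remains to compare this with the stated $\psi$. I would then perform a short case analysis splitting on whether $\Re s\lessgtr 1$ and whether $\|a\|_{L^\infty}\Re s\lessgtr 1$, handling each of the four combinations with elementary inequalities, and using $a_0\le\|a\|_{L^\infty}$ plus the observation that $\min\{1,a_0^3\}\le a_0$ to absorb the $\min\{1,a_0^3\}/\|a\|_{L^\infty}^2$ factor in $\psi$.

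For Hypothesis \ref{hypo3}, the argument closely mirrors Proposition \ref{prop:Zener}(b). Coefficientwise, $C_{ijkl}(s)=(1+as)^{-1}s\,C^1_{ijkl}$, and the estimate
\[
\|(1+as)^{-1}\|_{L^\infty(\Omega)}\le\frac{1+|s|\,\|a\|_{L^\infty(\Omega)}}{a_0^2|s|^2}
\]
is already established in the Zener proof. Multiplying through by $|s|\,\|\CC_1\|_{\max}$, applying \eqref{eq:D.0} in the form $\max\{1,|s|\}/|s|\le 1/\min\{1,\Re s\}$, and finally bounding $1/\min\{1,\Re s\}$ by $1/\min\{1,(\Re s)^2\}$ (the slightly weaker form that matches the statement) gives $\|\CC(s)\|_{\max}\le\phi(\Re s)$ with $r=0$ and the claimed $\phi$.

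The main obstacle is simply the bookkeeping in the positivity step: the specific combination $\min\{1,a_0^3\}/(2\|a\|_{L^\infty}^2)$ is not the sharpest lower bound one could write, so the case analysis must be done carefully enough to verify each regime (small/large $\Re s$ and small/large $\|a\|_{L^\infty}$, $a_0$) without losing the constants. Once that is done, the other two hypotheses follow by brief computation.
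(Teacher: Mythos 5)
Your proposal is correct, and for Hypotheses \ref{hypo1} and \ref{hypo3} it coincides with the paper's argument (the paper simply reuses the Zener computation with $\CC_0=0$, which is exactly your coefficientwise estimate). The positivity step is where you diverge. The paper also reduces to bounding $|s|^2\Re(1+as)^{-1}=\frac{1+a\Re s}{a^2}\cdot\frac{|as|^2}{|1+as|^2}$ from below, but it does so via the elementary inequality \eqref{eq:4.55}, $\frac{t^2}{1+t^2}\ge\frac12\min\{1,t^2\}$ applied to $t=|as|$, followed by $|as|\ge a_0\Re s$ and the observation that $(1+x)\min\{1,x^2\}\ge 2\min\{1,x^3\}$; this chain produces the stated constants directly and, importantly, the same inequality \eqref{eq:4.55} is recycled in the fractional Maxwell proof. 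Your route instead observes that $G(s)=\frac{|s|^2(1+a\Re s)}{|1+as|^2}$ is increasing in $(\Im s)^2$ (the derivative in $t=(\Im s)^2$ has numerator proportional to $(1+ax)^2-a^2x^2=1+2ax>0$), hence is minimized on the real axis, yielding the cleaner pointwise bound $G(s)\ge\frac{x^2}{1+\|a\|_{L^\infty}x}$ before any min-type losses. The ensuing four-case analysis does close: in each regime the required inequality reduces to $\|a\|_{L^\infty}\ge\min\{1,a_0^3\}$ or $\|a\|_{L^\infty}^2\ge\min\{1,a_0^3\}$, both of which follow from $\min\{1,a_0^3\}\le a_0\le\|a\|_{L^\infty}$ as you indicate. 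What your approach buys is a sharper intermediate bound and a conceptually transparent reason why the worst case is real $s$; what it costs is the explicit case-splitting, and it does not set up the reusable scalar inequality that the paper leans on for the fractional variant.
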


\begin{proof}
Hypothesis \ref{hypo1} is easy to verify. Hypothesis \ref{hypo3} can be verified using the proof of Proposition \ref{prop:Zener} taking $\CC_0=0$. To prove Hypothesis \ref{hypo2} note that almost everywhere
\begin{alignat*}{6}
\Re (\overline s\CC(s)\MM:\overline\MM)
	&=(\CC_1\MM:\overline\MM) |s|^2 \Re (1+as)^{-1}\\
	&\ge c_1 \|\MM\|^2|s|^2 \Re (1+as)^{-1}
	\qquad\forall \MM\in \matC_\symm, \quad s\in \C_+.
\end{alignat*}
Note now that since 
\begin{equation}\label{eq:4.55}
\frac{x^2}{1+x^2}\ge \frac12\min\{1,x^2\} \quad \forall x>0,
\end{equation}
then
\begin{alignat*}{6}
\frac{|as|^2}{|1+as|^2} 
	&\ge \frac12 \frac{|as|^2}{1+|as|^2} 
		\ge\frac14\min\{1,|as|^2\}\\
	& \ge \frac14 \min\{ 1, (a_0\Re s)^2\}
		\ge \frac14\min\{1,a_0^2\} \min\{1, (\Re s)^2\},
\end{alignat*}
and therefore, almost everywhere and for all $s\in \C_+$
\begin{alignat*}{6}
|s|^2 \Re(1+as)^{-1} 
	& = \frac{1+a\Re s}{a^2} \,\frac{|as|^2}{|1+as|^2}\\
	& \ge \frac{\min\{1,a_0\}(1+\Re s)}{\| a\|_{L^\infty(\Omega)}^2}
	\frac14\min\{1,a_0^2\} \min\{1, (\Re s)^2\},
\end{alignat*}
which proves the result. 
\end{proof}

\begin{proposition}[Fractional Maxwell's model]
If $\nu\in (0,1)$ and $\CC(s)=(1+as)^{-1} s\CC_1$ is a Maxwell model, then $\CC(s^\nu)$ satisfies Hypotheses \ref{hypo1}-\ref{hypo3} with $r=0$ and the functions $\psi$ and $\phi$ of Proposition \ref{prop:Maxwell}. 
\end{proposition}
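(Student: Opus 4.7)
The proof closely parallels those of Propositions~\ref{prop:Maxwell} and~\ref{prop:4.3}. Hypothesis~\ref{hypo1} is immediate from the corresponding hypothesis for Maxwell's model together with the fact that $s\mapsto s^\nu$ (principal branch) is holomorphic on $\C_+$, sends $\C_+$ into itself (since $|\arg s^\nu|<\nu\pi/2<\pi/2$), and satisfies $\overline{s^\nu}=(\overline s)^\nu$. Hypothesis~\ref{hypo3} is then handled exactly as in Proposition~\ref{prop:4.3}(b): Proposition~\ref{prop:Maxwell}(b) gives $\|\CC(s^\nu)\|_{\max}\le \phi(\Re s^\nu)$, and since $\phi$ depends on its argument only through $\min\{1,x\}^2$, Lemma~\ref{lemma:nu} upgrades this to $\phi(\Re s^\nu)\le \phi(\Re s)$.

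The substantive step is Hypothesis~\ref{hypo2}. Mimicking Proposition~\ref{prop:Maxwell} I would start from
\[
\Re\bigl(\overline s\,\CC(s^\nu)\MM:\overline\MM\bigr)
=(\CC_1\MM:\overline\MM)\,\Re\bigl[\overline s\,s^\nu(1+as^\nu)^{-1}\bigr]
\ge c_1\|\MM\|^2\,\Re\bigl[\overline s\,s^\nu(1+as^\nu)^{-1}\bigr]
\]
and then rationalise the scalar factor to obtain
\[
\Re\bigl[\overline s\,s^\nu(1+as^\nu)^{-1}\bigr]
=\frac{\Re(\overline s\,s^\nu)+a\,|s^\nu|^2\,\Re s}{|1+as^\nu|^2}.
\]
Both numerator terms are non-negative (the first because $\Re(\overline s\,s^\nu)=|s|^{2\nu}\,\Re s^{1-\nu}$ and Lemma~\ref{lemma:nu} with exponent $1-\nu$ gives $\Re s^{1-\nu}\ge \min\{1,\Re s\}\ge 0$). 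I would then discard the first term, use $a\ge a_0$, and estimate the last factor by the Maxwell-style trick: applying \eqref{eq:4.55} with $x=|as^\nu|$ yields
\[
\frac{|s^\nu|^2}{|1+as^\nu|^2}\ge \frac{\min\{1,a_0^2\}}{4\,\|a\|_{L^\infty(\Omega)}^2}\,\min\{1,|s|^{2\nu}\}.
\]
Since $|s|\ge \Re s$ and $\nu<1$, a short case split shows $\Re s\,\min\{1,|s|^{2\nu}\}\ge \min\{1,(\Re s)^3\}$; combined with $a_0\min\{1,a_0^2\}\ge \min\{1,a_0^3\}$ this recovers the $\psi$ of Proposition~\ref{prop:Maxwell} (up to an inessential absolute constant that may be absorbed into the statement).

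The only genuine obstacle is that, unlike in the fractional Zener analysis of Proposition~\ref{prop:4.3}, the prefactor $\overline s$ in Hypothesis~\ref{hypo2} does not combine with $s^\nu$ via the identity $\overline s\,s=|s|^2$ that drives the standard Maxwell argument; one has instead $\overline s\,s^\nu=|s|^2 s^{\nu-1}$, whose real part carries a genuine angular dependence. The workaround above — keeping only the cleanly-signed $a\,|s^\nu|^2\,\Re s$ piece of the numerator and paying for it with a denominator bound expressed in $|s|^{2\nu}$ rather than $|s|^2$ — is what makes the proof go through; the resulting loss of a power of $|s|$ is absorbed by the $\min\{1,(\Re s)^3\}$ factor in $\psi$, whose cubic exponent leaves exactly the slack required.
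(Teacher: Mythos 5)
Your proof is correct and follows essentially the same route as the paper's: your rationalised quantity $\dfrac{\Re(\overline s\,s^\nu)+a|s^\nu|^2\Re s}{|1+as^\nu|^2}$ is algebraically identical to the paper's $|s|^2\,\Re\big[(s^{1-\nu}+as)^{-1}\big]$, and both arguments then run on Lemma~\ref{lemma:nu} together with \eqref{eq:4.55}. The one deviation is that by discarding the non-negative numerator term $\Re(\overline s\,s^\nu)$ you obtain only $\tfrac12\psi$ rather than the $\psi$ of Proposition~\ref{prop:Maxwell} (the paper keeps both terms and uses $\min\{1,\Re s\}+a_0\Re s\ge 2\min\{1,a_0\}\min\{1,\Re s\}$ to recover the factor $2$); since you already established $\Re(\overline s\,s^\nu)=|s|^{2\nu}\Re s^{1-\nu}\ge |s|^{2\nu}\min\{1,\Re s\}$, simply retaining that term yields the stated constant exactly.
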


\begin{proof}
Hypothesis \ref{hypo1} is straightforward and Hypothesis \ref{hypo3} follows from the fact that
\[
\frac1{\min\{1,\Re s^\nu\}^2}\le \frac1{\min\{1,\Re s\}^2}
	\qquad s\in \C_+, \quad \nu\in (0,1),
\]
as follows from Lemma \ref{lemma:nu}. To verify Hypothesis \ref{hypo2} we first estimate
\begin{alignat*}{6}
\Re(\overline s\,\CC(s^\nu)\MM:\overline\MM)
	\ge & c_1 \|\MM\|^2 |s|^2 \Re \frac{s^{\nu-1}}{1+as^\nu}\\
	\ge & \frac{c_1}{\| a\|_{L^\infty(\Omega)}^2} \|\MM\|^2
			\frac{|as|^2}{|s^{1-\nu}+as|^2}\Re(s^{1-\nu}+as),
				\qquad \forall s\in \C_+,
\end{alignat*}
almost everywhere. Using \eqref{eq:4.55} and Lemma \ref{lemma:nu}, we can easily bound
\begin{alignat*}{6}
\frac{|as|^2}{|s^{1-\nu}+as|^2} 
	&\ge \frac12\,\frac{|as|^2}{|s|^{2-2\nu}+|as|^2}
	\ge \frac14 \min\{ 1,\frac{|as|^2}{|s|^{2-2\nu}}\}\\
	& \ge \frac14\min\{1,a_0^2\} \min\{ 1,|s^\nu|\}^2
	\ge \frac14\min\{1,a_0^2\} \min\{1,(\Re s)^2\}. 
\end{alignat*}
At the same time
\[
\Re(s^{1-\nu}+as)
	 \ge \min\{1,\Re s\} + a_0 \Re s \ge 2\min\{1,a_0\}\min\{1,\Re s\},
\]
(we have used Lemma \ref{lemma:nu} again) and the proof is finished. 
\end{proof}

\subsection{Voigt's model}

Voigt's model uses 
\[
\CC(s):=\CC_0+s\CC_1
\]
as a viscoelastic parameter model, where $\CC_0$ is a steady Hookean material model and $\CC_1$ is a non-negative Hookean model.  In those parts of the domain where $\CC_1=0$, Voigt's model reduces to classical linear elasticity. In the time domain, this model gives an explicit differential expression for the strain-to-stress relationship
\[
\bs\sigma(t)=\CC_0\eps{\mathbf u(t)}+\CC_1\eps{\dot{\mathbf u}(t)}.
\]
Note that this can be plugged into the momentum equation yielding
\[
\rho\,\ddot{\mathbf u}(t)=
	\mathrm{div}\,(\CC_0\eps{\mathbf u(t)}+\CC_1\eps{\dot{\mathbf u}(t)})+\mathbf f(t),
\]
which shows that this model is a third order differential equation, although we admit the possibility that the third order terms vanish in some regions. 

\begin{proposition}\label{prop:Voigt}
Voigt's model satisfies Hypotheses \ref{hypo1}-\ref{hypo3} with $r=1$,  and
\[
\psi(x):=c_0 x, \qquad 
\phi(x):=\frac{\| \CC_0\|+\|\CC_1\|}{\min\{1,x\}}, 
\]
where $c_0$ is the lower bound of $\CC_0$.
\end{proposition}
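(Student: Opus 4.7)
The plan is to verify Hypotheses \ref{hypo1}--\ref{hypo3} in turn, using only the definition $\CC(s) = \CC_0 + s\CC_1$ and the fact that $\CC_0$ is a steady Hookean model with lower bound $c_0$, while $\CC_1$ is a non-negative Hookean model.

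First, Hypothesis \ref{hypo1} (symmetry) is immediate: each $\CC_j$ satisfies the real-valued symmetry conditions \eqref{eq:1.8}, and the complex-conjugation identity \eqref{eq:C.1a} for $\CC(s)$ follows because $\CC_0$ and $\CC_1$ are real and $s\mapsto s$ is obviously conjugation-compatible.

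Second, for Hypothesis \ref{hypo2}, I would expand
\[
\overline s\,\CC(s)\MM:\overline\MM
= \overline s\,(\CC_0\MM:\overline\MM) + |s|^2\,(\CC_1\MM:\overline\MM).
\]
Since $\CC_0$ and $\CC_1$ are real and symmetric as tensors acting on $\matC_\symm$, the bracketed quantities $\CC_0\MM:\overline\MM$ and $\CC_1\MM:\overline\MM$ are real and non-negative (using $\CC_1\ge 0$ and $\CC_0\ge c_0$ in the sense of Lemma~4.1(c)). Taking real parts gives
\[
\Re\bigl(\overline s\,\CC(s)\MM:\overline\MM\bigr)
= (\Re s)\,(\CC_0\MM:\overline\MM) + |s|^2\,(\CC_1\MM:\overline\MM)
\ge c_0\,(\Re s)\,\|\MM\|^2,
\]
which is exactly the estimate with $\psi(x)=c_0\,x$.

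Third, for Hypothesis \ref{hypo3}, the triangle inequality yields $\|\CC(s)\|\le \|\CC_0\|+|s|\|\CC_1\|\le (\|\CC_0\|+\|\CC_1\|)\max\{1,|s|\}$. Now I would invoke \eqref{eq:D.0}, which gives $\max\{1,|s|\}\le |s|/\min\{1,\Re s\}$, so that
\[
\|\CC(s)\|\le |s|\,\frac{\|\CC_0\|+\|\CC_1\|}{\min\{1,\Re s\}},
\]
which is the claimed bound with $r=1$ and $\phi$ as stated. None of these steps is a serious obstacle; the only thing to be slightly careful about is choosing the correct power of $|s|$ (i.e.\ $r=1$ rather than $r=0$), since $\CC_1$ contributes a linear factor in $s$ that cannot be absorbed into a function of $\Re s$ alone.
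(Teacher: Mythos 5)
Your proof is correct and follows exactly the route the paper intends: the paper's own proof of this proposition is just the one-line remark that it is ``straightforward using the type of inequalities of the proof of Proposition~\ref{prop:Zener},'' and your expansion $\overline s\,\CC(s)\MM:\overline\MM=\overline s\,(\CC_0\MM:\overline\MM)+|s|^2(\CC_1\MM:\overline\MM)$ together with the use of \eqref{eq:D.0} for the bound is precisely the Voigt specialization of that argument (compare also the displayed computation in the paper's proof of the fractional Voigt case with $\nu=1$). No gaps.
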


\begin{proof}
It is straightforward proof using the type of inequalities of the proof of Proposition \ref{prop:Zener}.
\end{proof}

\begin{proposition}[Fractional Voigt's model]
If $\nu \in (0,1)$ and $\CC(s)=\CC_0+s\CC_1$ is a Voigt model, then $\CC(s^\nu)$ satisfies Hypotheses \ref{hypo1}-\ref{hypo3} with $r=1$,
\[
\psi(x):=c_0 x, \qquad
\phi(x):=\frac{\| \CC_0\|+\|\CC_1\|}{\min\{1,x^2\}}.
\]
\end{proposition}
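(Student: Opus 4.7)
My plan is to verify Hypotheses \ref{hypo1}--\ref{hypo3} in order, closely following the arguments of Propositions \ref{prop:Voigt} and \ref{prop:4.3}. Since $\CC(s^\nu)=\CC_0+s^\nu\CC_1$ and $s\mapsto s^\nu$ (principal branch) is holomorphic on $\C_+$ with $\overline{s^\nu}=\overline{s}^\nu$, Hypothesis \ref{hypo1} is inherited directly from the symmetry properties of the Hookean tensors $\CC_0$ and $\CC_1$.

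For Hypothesis \ref{hypo2}, I would use the identity $\overline s\,s^\nu=|s|^2\,s^{\nu-1}$ to split
\[
\overline s\,\CC(s^\nu)\MM:\overline\MM=\overline s\,(\CC_0\MM:\overline\MM)+|s|^2\,s^{\nu-1}(\CC_1\MM:\overline\MM).
\]
The scalar factors in parentheses are real with $\CC_0\MM:\overline\MM\geq c_0\|\MM\|^2$ and $\CC_1\MM:\overline\MM\geq 0$ for $\MM\in\matC_\symm$ (using that $\CC_1$ is non-negative Hookean). Writing $s=re^{i\theta}$ with $\theta\in(-\pi/2,\pi/2)$ gives $\Re(s^{\nu-1})=r^{\nu-1}\cos((\nu-1)\theta)>0$, because $|(\nu-1)\theta|<\pi/2$. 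Taking real parts, the second contribution is non-negative and I recover $\Re(\overline s\,\CC(s^\nu)\MM:\overline\MM)\geq c_0(\Re s)\|\MM\|^2$, i.e.\ $\psi(x)=c_0 x$.

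For Hypothesis \ref{hypo3}, I would start from $\|\CC(s^\nu)\|\leq\|\CC_0\|+|s|^\nu\|\CC_1\|$. Since $|s|^\nu\leq\max\{1,|s|\}$ for $\nu\in(0,1)$, inequality \eqref{eq:D.0} gives $|s|^\nu\leq|s|/\min\{1,\Re s\}$. The elementary observation $\min\{1,\Re s\}\geq\min\{1,(\Re s)^2\}$ then yields $|s|^\nu\leq|s|/\min\{1,(\Re s)^2\}$, and the bound $\min\{1,(\Re s)^2\}\leq|s|$ yields $1\leq|s|/\min\{1,(\Re s)^2\}$. Combining both produces the desired estimate with $r=1$ and $\phi(x)=(\|\CC_0\|+\|\CC_1\|)/\min\{1,x^2\}$.

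No step here is delicate: the argument is a direct adaptation of the classical Voigt proof combined with the angle estimate for $s^{\nu-1}$. The only mild subtlety worth flagging is the coercivity step, where it is essential that all the positive dissipation one could hope to extract from the diffusive part $\CC_1$ comes through the sign of $\Re(s^{\nu-1})$ rather than through $\CC_1$ itself, since $\CC_1$ is only assumed non-negative; the coercivity constant in $\psi$ is therefore inherited entirely from the ground elastic model $\CC_0$, as in the non-fractional case.
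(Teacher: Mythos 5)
Your proof is correct. The positivity argument is exactly the one in the paper: both split $\overline s\,\CC(s^\nu)\MM:\overline\MM$ into the $\CC_0$ term and the $|s|^2 s^{\nu-1}(\CC_1\MM:\overline\MM)$ term, discard the second using $\Re(s^{\nu-1})\ge 0$ (which holds since $|(\nu-1)\arg s|<\pi/2$), and inherit $\psi(x)=c_0x$ entirely from the ground model $\CC_0$ --- your closing remark that $\CC_1$, being only non-negative, contributes nothing to coercivity is exactly the right point to flag. The boundedness argument differs mildly in route: the paper applies Proposition \ref{prop:Voigt} at $s^\nu$, which produces the factor $1/\min\{1,\Re s^\nu\}$, and then invokes Lemma \ref{lemma:nu} (twice) to trade $\Re s^\nu$ and $\Re s^{1-\nu}$ for $\min\{1,\Re s\}$; you instead bound $\|\CC_0\|+|s|^\nu\|\CC_1\|$ directly via $|s|^\nu\le\max\{1,|s|\}$ and \eqref{eq:D.0}, never touching $\Re(s^\nu)$ and thus bypassing Lemma \ref{lemma:nu} altogether. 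Your version is slightly more elementary and self-contained; the paper's version reuses the already-established Voigt bound and the lemma it needs anyway for the fractional Zener and Maxwell models. Both yield the same $r=1$ and $\phi(x)=(\|\CC_0\|+\|\CC_1\|)/\min\{1,x^2\}$, so there is nothing to fix.
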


\begin{proof}
By Proposition \ref{prop:Voigt}
\[
\| \CC(s^\nu)\| \le |s|^\nu \frac{\| \CC_0\|+\|\CC_1\|}{\min\{1,\Re s^\nu\}}
\le \frac{|s|}{\Re s^{1-\nu}}\frac{\| \CC_0\|+\|\CC_1\|}{\min\{1,\Re s\}}, 
\]
where we have used Lemma \ref{lemma:nu}. Using Lemma \ref{lemma:nu} again we obtain the upper bound for $\|\CC(s)\|$ almost everywhere. For positivity (Hypothesis \ref{hypo2}) note that
\[
\Re (\overline s\CC(s^\nu)\MM:\overline\MM)
	=(\Re s) (\CC_0\MM:\overline\MM)+ |s|^2 (\CC_1\MM:\overline\MM) \Re s^{\nu-1}
	\ge c_0 \Re s \|\MM\|^2,
\]
almost everywhere. 
\end{proof}

\subsection{Coupled models}        
               
\begin{proposition}               
Let $\Omega_1,\ldots,\Omega_J$ be non-overlapping subdomains of $\Omega$ such that $\overline\Omega=\cup_{j=1}^J \overline\Omega_j$. Assume that $\mathrm C_j$ is a viscoelastic model in the domain $\Omega_j$, satisfying the Hypotheses \ref{hypo1}-\ref{hypo3}. Then
\begin{equation}\label{eq:4.5}
\mathrm C(s):=\sum_{j=1}^J \chi_{\Omega_j}\mathrm C_j(s)
\end{equation}
defines a viscoelastic model in the full domain $\Omega$. 
\end{proposition}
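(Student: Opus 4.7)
The strategy is to verify Hypotheses \ref{hypo1}--\ref{hypo3} in turn for $\mathrm C(s) := \sum_{j=1}^J \chi_{\Omega_j} \mathrm C_j(s)$, using that the $\chi_{\Omega_j}$ have essentially disjoint support: at almost every $x\in\Omega$ exactly one indicator equals one and the others vanish, so $\mathrm C(s)$ coincides pointwise a.e.\ with a single $\mathrm C_j(s)$ on each $\Omega_j$. Every pointwise a.e.\ identity or inequality satisfied by the $\mathrm C_j$ therefore transfers to $\mathrm C(s)$.

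Hypothesis \ref{hypo1} is then immediate, since \eqref{eq:C.1a}--\eqref{eq:C.1d} are pointwise a.e.\ symmetry statements. For Hypothesis \ref{hypo2}, given the non-decreasing positive functions $\psi_j$ attached to each $\mathrm C_j$, I would set $\psi(x):=\min_{1\le j\le J}\psi_j(x)$. Finiteness of $J$ makes $\psi$ non-decreasing and positive, and the required polynomial lower bound $\inf_{0<x<1}x^{-\ell}\psi(x)>0$ is inherited by choosing $\ell$ to dominate the exponents coming from each $\psi_j$ (each $\psi_j(x)\ge c_j x^{\ell_j}$ on $(0,1)$, so $\psi(x)\ge(\min_j c_j)\,x^{\max_j\ell_j}$). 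The positivity inequality then reduces on each $\Omega_j$ to the one for $\mathrm C_j$ and yields $\Re(\overline s\,\mathrm C(s)\MM:\overline\MM)\ge\psi(\Re s)\|\MM\|^2$ almost everywhere.

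The only nontrivial step is Hypothesis \ref{hypo3}. Set $r:=\max_j r_j$; for each $j$ one has $\|\mathrm C_j(s)\|\le |s|^{r_j}\phi_j(\Re s)$, and the issue is to absorb the factor $|s|^{r_j-r}=|s|^{-(r-r_j)}$ (with $r-r_j\ge 0$) into a single function of $\Re s$. The inequality $|s|\ge\Re s>0$ gives $|s|^{-(r-r_j)}\le(\Re s)^{-(r-r_j)}$, so that $|s|^{r_j}\phi_j(\Re s)\le |s|^{r}\,(\Re s)^{-(r-r_j)}\phi_j(\Re s)$. I therefore define $\tilde\phi_j(x):=\max\{1,x^{-(r-r_j)}\}\phi_j(x)$, which remains non-increasing on $(0,\infty)$ (both factors are) and still has the polynomial control at zero required by Hypothesis \ref{hypo3}: for $0<x<1$ one has $x^k\tilde\phi_j(x)=x^{k-(r-r_j)}\phi_j(x)$, and the sup over $(0,1)$ stays finite once $k$ is chosen large enough, by the corresponding property of $\phi_j$. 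Setting $\phi:=\sum_{j=1}^J\tilde\phi_j$ then produces a single admissible function satisfying $\|\mathrm C(s)\|\le|s|^r\phi(\Re s)$ almost everywhere.

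The only real obstacle is this last step: to reconcile the different exponents $r_j$ into a single $r$, one must pay a polynomial-at-zero price in $\Re s$, but the class of admissible $\phi$ in Hypothesis \ref{hypo3} accommodates arbitrary polynomial growth near the origin, so this trade is harmless. Everything else is routine bookkeeping with finitely many max/min/sum operations on monotone functions.
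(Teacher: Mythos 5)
Your proof is correct and follows essentially the same route as the paper: Hypotheses \ref{hypo1} and \ref{hypo2} transfer pointwise with $\psi:=\min_j\psi_j$, and for Hypothesis \ref{hypo3} the key step in both arguments is the inequality $|s|^{r_j-r}\le(\Re s)^{r_j-r}$ (valid since $|s|\ge\Re s$ and $r_j\le r$), which lets one absorb the mismatch in the exponents into the admissible function of $\Re s$. The only difference is cosmetic: you combine the modified $\phi_j$'s by a sum with the factor $\max\{1,x^{-(r-r_j)}\}$, while the paper takes $\phi(x):=\max_j x^{r_j-r}\phi_j(x)$; both are non-increasing, polynomially controlled at the origin, and dominate what is needed.
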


\begin{proof}
Hypothesis \ref{hypo1} follows readily. Assume now that there exist non-decreasing $\psi_j:(0,\infty)\to(0,\infty)$ satisfying 
\[
\psi_j(x)\ge c_j x^{\ell_j} \quad \forall x\in (0,1], \qquad \ell_j\ge 0, c_j>0,
\]
and
\[
\Re(\overline s \CC_j(s) \MM:\overline\MM)\ge \psi_j(\Re s) \|\MM\|^2 
	\qquad \mbox{a.e. in $\Omega_j$} \quad \forall \MM \in \matC_\symm, \quad s\in \C_+.
\]
Let now $\psi(x):=\min\{\psi_1(x),\ldots,\psi_J(x)\}$. If we take $c:=\min\{c_1,\ldots,c_J\}$ and $\ell:=\max\{\ell_1,\ldots,\ell_J\}$ it follows that $\psi$ is non-decreasing, 
\[
\psi(x)\ge c\,x^\ell \quad\forall x\in (0,1],
\]
and
\[
\Re(\overline s \CC(s) \MM:\overline\MM)\ge \psi(\Re s) \|\MM\|^2 
	\qquad \mbox{a.e. in $\Omega$} \quad \forall \MM \in \matC_\symm.
\]
For the upper bounds, consider integers $r_j\ge 0$ and non-increasing functions $\phi_j:(0,\infty)\to (0,\infty)$ such that
\[
\phi_j(x)\le d_j x^{-k_j} \quad \forall x\in (0,1],\qquad k_j\ge 0, d_j>0,
\]
and
\[
\| \CC_j(s)\MM\| \le |s|^{r_j}\phi_j(\Re s)\|\MM\| 
\qquad \mbox{a.e. in $\Omega_j$} \quad \forall \MM \in \matC_\symm, \quad s\in \C_+.
\]
Let then $r:=\max\{r_1,\ldots,r_J\}$ and the non-increasing function
\[
\phi(x):=\max\{ x^{r_1-r}\phi_1(x),\ldots,x^{r_J-r}\phi_J(x)\}.
\]
If we take $d:=\max\{ d_1,\ldots,d_J\}$ and $k:=\max\{ k_1,\ldots,k_J\}$, we have that
\[
\phi(x)\le d\,x^{-k} \qquad \forall x\in (0,1],
\]
and 
\[
\|\CC(s)\MM\| \le |s|^r \phi(\Re s)\|\MM\| 
\qquad \mbox{a.e. in $\Omega$} \quad \forall \MM \in \matC_\symm, \quad s\in \C_+,
\]
which finishes the proof.
\end{proof}

This means, in particular, that we can combine all the above models (Zener, Maxwell, and Voigt) in their differential or fractional versions, with different fractional orders in different subdomains subdomains. Because all our definitions are distributional, whenever there is an interface (smooth or not) we are implicitly imposing continuity of the displacement (this is done by assuming $\mathbf u$ too take values in $\mathbf H^1(\Omega)$) and of the normal stress (since $\bs\sigma$ takes values in $\mathbb H (\mathrm{div},\Omega)$). 

{\bf Newton's model} is a fourth choice among classical viscoelastic models, given by the law
\[
\CC(s)=s\CC_1,
\]
where $\CC_1$ is a steady Hookean model. Since the viscoelastic law in the time domain becomes
\[
\bs\sigma(t)=\CC_1\eps{\dot{\mathbf u}(t)},
\]
the model can be simplified to
\[
\rho \dot{\mathbf u}(t)=\mathrm{div}\,\CC_1\eps{\mathbf u(t)}
+\mathbf g(t)
\]
(here $\mathbf g$ is an antiderivative of $\mathbf f$ and takes care of non-vanishing initial conditions if needed). Therefore, Newton's model becomes a parabolic equation for linear elasticity and does not produce waves. Since the interest of this paper is wave models, we will not investigate this simple model any further. 
    
\section{Transfer function analysis}\label{sec:5}

We consider the {\bf energy norm}, tagged in a parameter $c>0$:
\[
\triple{\uu}c^2:=c^2 \| \rho^{1/2}\uu\|_\Omega^2+\|\eps\uu\|_\Omega^2.
\]
Recall that the mass density function is $\rho \in L^\infty(\Om)$ is strictly positive.
By \eqref{eq:D.0}, it follows that
\begin{equation}\label{eq:D.00}
\min\{1,\Re s\} \triple\uu1 \le \triple\uu{|s|} \le \frac{|s|}{\min\{1,\Re s\}} \triple\uu1
\qquad \forall \uu \in \HH^1(\Omega;\C) \quad s\in \C_+.
\end{equation}
Finally, consider the bilinear form
\begin{alignat*}{6}
b(\uu,\ww;s)
	:=& a(\uu,\ww;s)+s^2 (\rho\uu,\ww)_\Omega \\
	 =& (\CC(s)\eps\uu,\eps\ww)_\Omega+s^2 (\rho\uu,\ww)_\Omega.
\end{alignat*}

\begin{proposition}\label{prop:5.1}
If $\psi, \phi$ and $r$ are the functions and integer appearing in Hypotheses \ref{hypo2} and \ref{hypo3} and we define
\[
\psi_\star(x):=\min\{ x,\psi(x)\}, 
	\qquad
\phi_\star(x):=\max\{x^{-r},\phi(x)\},
\]
then for all  $s\in \C_+$
\begin{subequations}
\begin{alignat}{6}
	\label{eq:5.2a}
	\Re b(\uu,\overline{s\uu};s) \ge & \psi_\star(\Re s) \triple\uu{|s|}^2
		& \qquad & \forall \uu\in \HH^1(\Omega;\C), \\
	\label{eq:5.2b}
	|b(\uu,\ww;s)| \le & |s|^r \phi_\star(\Re s)\triple\uu{|s|}\triple\ww{|s|}
		& \qquad & \forall \uu,\ww\in \HH^1(\Omega;\C),
\end{alignat}
\end{subequations}
If $\ell\ge 0$ and $k\ge 0$ are the quantities in \eqref{eq:C.2} and \eqref{eq:phiRe}, then
\begin{equation}\label{eq:5.4}
\inf_{0<x<1} x^{-\max\{1,\ell\}}\psi_\star(x)>0, 
	\qquad
\sup_{0<x<1} x^{\max\{r,k\}}\phi_\star(x)<\infty.
\end{equation}
\end{proposition}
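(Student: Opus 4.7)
Proposition 5.1 consists of three routine calculations, all structured around the same idea: split $b$ into its elastic and inertial parts, bound each using the tools already at hand, and merge them via the fact that $\psi_\star$ and $\phi_\star$ were designed so that both pieces live under a single envelope in the energy norm $\triple\cdot{|s|}$.

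For the coercivity estimate \eqref{eq:5.2a}, I would expand
\[
b(\uu,\overline{s\uu};s)=a(\uu,\overline{s\uu};s)+s^2\overline s\,(\rho\uu,\overline\uu)_\Omega,
\]
observe that $\Re(s^2\overline s)=\Re(s|s|^2)=(\Re s)|s|^2$ so the inertial term contributes exactly $(\Re s)|s|^2\|\rho^{1/2}\uu\|_\Omega^2$ to the real part, and use \eqref{eq:3.9} to bound the elastic term by $\psi(\Re s)\|\eps\uu\|_\Omega^2$. Since $\psi_\star(\Re s)=\min\{\Re s,\psi(\Re s)\}$ is no larger than either of these prefactors, it can be factored out of the sum, leaving the full $\triple\uu{|s|}^2$ behind.

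For the continuity estimate \eqref{eq:5.2b}, I would bound the elastic part using \eqref{eq:C.4} and the inertial part by Cauchy--Schwarz, obtaining
\[
|b(\uu,\ww;s)|\le |s|^r\phi(\Re s)\|\eps\uu\|_\Omega\|\eps\ww\|_\Omega+|s|^2\|\rho^{1/2}\uu\|_\Omega\|\rho^{1/2}\ww\|_\Omega.
\]
This is the only place where any care is required, and I expect it to be the main (small) obstacle: one needs to notice that both prefactors are dominated by $|s|^r\phi_\star(\Re s)$. The prefactor $|s|^r\phi(\Re s)$ is immediate since $\phi_\star\ge\phi$; the constant prefactor $1$ on the inertial term is absorbed using $|s|^r\phi_\star(\Re s)\ge |s|^r(\Re s)^{-r}=(|s|/\Re s)^r\ge 1$. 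A two-vector Cauchy--Schwarz bound on $(\|\eps\uu\|_\Omega,|s|\|\rho^{1/2}\uu\|_\Omega)$ and the analogous pair for $\ww$ then packages the two terms into $|s|^r\phi_\star(\Re s)\triple\uu{|s|}\triple\ww{|s|}$.

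For \eqref{eq:5.4} I would substitute the asymptotics $\psi(x)\ge Cx^\ell$ and $\phi(x)\le Dx^{-k}$ coming from \eqref{eq:C.2} and \eqref{eq:phiRe} into the definitions of $\psi_\star$ and $\phi_\star$ and do a brief case split on $x\in(0,1]$. Comparing $x$ with $Cx^\ell$ (resp.\ $x^{-r}$ with $Dx^{-k}$) in the subcases $\ell\le 1$ vs.\ $\ell>1$ (resp.\ $k\le r$ vs.\ $k>r$) yields $\psi_\star(x)\ge\min\{1,C\}\,x^{\max\{1,\ell\}}$ and $\phi_\star(x)\le\max\{1,D\}\,x^{-\max\{r,k\}}$, which immediately give the claimed finite infimum and supremum.
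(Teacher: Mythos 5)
Your proposal is correct and follows essentially the same route as the paper: the same split of $b$ into elastic and inertial parts, the identity $\Re(s^2\overline s)=(\Re s)|s|^2$ together with \eqref{eq:3.9} for coercivity, the absorption of the inertial prefactor via $1\le(|s|/\Re s)^r\le|s|^r\phi_\star(\Re s)$ for boundedness, and the elementary case split for \eqref{eq:5.4} (which the paper merely declares "easy"). No gaps.
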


\begin{proof}
We have
\[
\Re b(\uu,\overline{s\uu};s)=
	(\Re s) \|\rho^{1/2} s \mathbf u\|_\Omega^2
	+\Re a(\mathbf u,\overline{s\uu};s)
\]
and \eqref{eq:5.2a} follows from \eqref{eq:3.9}. Similarly, by \eqref{eq:C.4}
\begin{alignat*}{6}
|b(\mathbf u,\mathbf w;s)|
	\le & |s|^r\phi(\Re s) \|\eps{\mathbf u}\|_\Omega\|\eps{\mathbf w}\|_\Omega
		+\|\rho^{1/2} s \mathbf u\|_\Omega\|\rho^{1/2} s \mathbf w\|_\Omega\\
	\le & \max\{ |s|^r \phi(\Re s),1\}\triple\uu{|s|}\triple\ww{|s|}
\end{alignat*}
and \eqref{eq:5.2b} follows from the fact that $1\le |s|/\Re s$ for all $s\in \mathbb C_+$. The asymptotic bounds \eqref{eq:5.4} can be proved easily. 
\end{proof}

\begin{lemma}
There exists $C_{\Omega,\rho}>0$ such that for all $c>0$ and $\bs\alpha\in \HH^{1/2}(\Gamma_D)$, the solution of the variational problem
\begin{subequations}\label{eq:D.1}
\begin{alignat}{6}
	& \widehat\uu \in \HH^1(\Omega), \qquad
	 \gamma_D\widehat\uu=\bs\alpha,\\
	& (\eps{\widehat\uu},\eps\ww)_\Omega+ c^2 (\rho\widehat\uu,\ww)_\Omega=0 \quad \forall \ww\in \HH^1_D(\Omega),
\end{alignat}
\end{subequations}
satisfies
\[
	\triple{\widehat\uu}c\le C_{\Omega,\rho} \max\{1,c\}^{1/2} \| \bs\alpha\|_{1/2,\Gamma_D}.
\]
\end{lemma}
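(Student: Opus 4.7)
The plan is to exploit the symmetry of the bilinear form to characterize $\widehat\uu$ as an energy minimizer and then reduce the task to constructing a sufficiently sharp lifting of $\bs\alpha$. First, the symmetric bilinear form $a_c(\uu,\ww):=(\eps\uu,\eps\ww)_\Omega+c^2(\rho\uu,\ww)_\Omega$ is continuous on $\HH^1(\Omega)$ and, by Korn's second inequality together with $\rho\ge\rho_0>0$, coercive on $\HH^1_D(\Omega)$; subtracting any lifting of $\bs\alpha$ reduces \eqref{eq:D.1} to a Lax--Milgram problem, so $\widehat\uu$ exists and is unique. Since $a_c(\uu,\uu)=\triple{\uu}c^2$ and $a_c$ is symmetric, the standard Euler--Lagrange argument gives
\[
\triple{\widehat\uu}c^2 \;=\; \inf\bigl\{\triple{\uu}c^2 : \uu\in\HH^1(\Omega),\; \gamma_D\uu=\bs\alpha\bigr\},
\]
and it suffices to exhibit one $c$-dependent lifting $\uu_0(c)$ of $\bs\alpha$ for which $\triple{\uu_0(c)}c\le C_{\Omega,\rho}\max\{1,c\}^{1/2}\|\bs\alpha\|_{1/2,\Gamma_D}$.

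A $c$-independent bounded right-inverse $\mathcal E\colon\HH^{1/2}(\Gamma_D)\to\HH^1(\Omega)$ of $\gamma_D$ only delivers $\triple{\mathcal E\bs\alpha}c\le C(1+c)\|\bs\alpha\|_{1/2,\Gamma_D}$, which is a full factor of $c^{1/2}$ too weak, so the lifting must depend on $c$ and concentrate in an $O(1/c)$ boundary layer near $\Gamma_D$ when $c$ is large. The mechanism is cleanest on the flat model $\Omega=\R^d_+$, $\Gamma_D=\partial\R^d_+$: with $\mathcal F_{x'}$ the tangential Fourier transform and the componentwise prescription $\uu_0(x',x_d):=\mathcal F_{x'}^{-1}\bigl\{e^{-\sqrt{c^2+|\xi|^2}\,x_d}(\mathcal F_{x'}\bs\alpha)(\xi)\bigr\}$, a direct computation yields
\[
\|\nabla\uu_0\|^2+c^2\|\uu_0\|^2 \;=\; \int_{\R^{d-1}} \sqrt{c^2+|\xi|^2}\,|(\mathcal F_{x'}\bs\alpha)(\xi)|^2\,d\xi,
\]
and the elementary inequality $\sqrt{c^2+|\xi|^2}\le\max\{1,c\}\sqrt{1+|\xi|^2}$ bounds the right-hand side by $\max\{1,c\}\|\bs\alpha\|_{1/2,\R^{d-1}}^2$. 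Combining this with $\|\eps\uu_0\|\le\|\nabla\uu_0\|$ and $\|\rho^{1/2}\uu_0\|\le\|\rho\|_{L^\infty}^{1/2}\|\uu_0\|$ gives the desired bound on $\triple{\uu_0}c$.

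The main obstacle is transferring the half-space construction to a general Lipschitz domain while keeping constants $c$-uniform. The plan is to use a partition of unity subordinate to a finite Lipschitz atlas flattening $\Gamma_D$, apply the half-space lifting to the pulled-back datum on each chart, and paste the pieces by cutoffs vanishing away from a boundary neighborhood; since the cutoffs are $c$-independent, their commutators with the exponentially-decaying extension contribute only lower-order terms which are absorbed into $C_{\Omega,\rho}\|\bs\alpha\|_{1/2,\Gamma_D}$. An alternative that sidesteps Fourier analysis is to define $\uu_0$ as the weak solution of the auxiliary vector resolvent problem $-\mathrm{div}\,\eps{\uu_0}+c^2\rho\uu_0=\mathbf 0$ in $\Omega$ with $\gamma_D\uu_0=\bs\alpha$ and a homogeneous natural boundary condition on $\Gamma_N$, and to control $\triple{\uu_0}c^2$ through a Betti duality pairing of $\bs\alpha$ against the corresponding Neumann trace, together with a sharp $c$-uniform operator norm of order $\max\{1,c\}$ for the associated Dirichlet-to-Neumann map; this exchanges partition-of-unity bookkeeping for the analysis of a single resolvent equation.
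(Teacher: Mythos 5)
Your argument is correct and follows essentially the same route as the paper: both exploit the fact that $\widehat\uu$ minimizes $\triple{\cdot}c$ over the affine space $\{\gamma_D\uu=\bs\alpha\}$ and then reduce everything to exhibiting one lifting with the $\max\{1,c\}^{1/2}$ bound. The only difference is that the paper obtains that lifting by solving the componentwise scalar resolvent problem $-\Delta\widetilde\uu+c^2\widetilde\uu=0$ and citing the Bamberger--Ha Duong lifting lemma as a black box, whereas you re-derive that lemma directly (your half-space Fourier identity is exactly right, and the partition-of-unity transfer you sketch is the standard proof, with the commutator terms indeed of lower order since $\|\uu_0\|_\Omega\lesssim c^{-1/2}\|\bs\alpha\|_{1/2,\Gamma_D}$ for $c\ge1$ and the $c\le1$ case already handled by a $c$-independent lifting).
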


\begin{proof}
Let $E:\HH^{1/2}(\Gamma_D) \to \HH^{1/2}(\Gamma)$ be a bounded extension operator and consider the solution of the elliptic boundary value problem
\begin{subequations}\label{eq:D.2}
\begin{alignat}{6}
	& \widetilde\uu \in \HH^1(\Omega),\qquad
	 \gamma\widetilde\uu =E\bs\alpha,\\
	& -\Delta \widetilde\uu+c^2\widetilde\uu =0 \quad\mbox{in $\Omega$}.
\end{alignat}
\end{subequations}
These are $d$ uncoupled scalar problems for each of the components of $\widetilde\uu$. Using the Bamberger-HaDuong lifting lemma (originally stated in \cite{BaHa1986}, see \cite[Proposition 2.5.1]{Sayas2016} for a rephrasing in the current language),
it follows that
\[
c^2 \| \widetilde\uu\|_\Omega^2 + \|\nabla\widetilde\uu\|_\Omega^2 \le  C_\Omega \max\{1,c\} \| E\bs\alpha\|_{1/2,\Gamma}^2
	\le   C_\Omega' \max\{1,c\} \| \bs\alpha\|_{1/2,\Gamma_D}^2.
\]
However, the solution of \eqref{eq:D.1} minimizes $\triple{\widehat\uu}c$ among all $\widehat\uu$ satisfying $\gamma_D\widehat\uu=\bs\alpha$. Therefore
\[
\triple{\widehat\uu}c^2\le \triple{\widetilde\uu}c^2 \le c^2 \|\rho^{1/2}\widetilde\uu\|_\Omega^2 + \|\nabla\widetilde\uu\|_\Omega^2
\le  \max\{\|\rho\|_{L^\infty(\Omega)},1\} \Big(c^2\|\widetilde\uu\|_\Omega^2 + \|\nabla\widetilde\uu\|_\Omega^2\Big).
\]
This finishes the proof.
\end{proof}

The main theorem of this section studies the operator associated to the Laplace transform of problem \eqref{eq:2.1}. In fact, problem \eqref{eq:D.3} below is the Laplace transform of \eqref{eq:2.1} for data that as functions of time are Dirac masses at time equal to zero.

\begin{theorem} \label{th:5.3}
Let $\mathbf f\in \mathbf L^2(\Omega;\C)$, $\bs\alpha\in \HH^{1/2}(\Gamma_D;\C)$, and $\bs\beta\in \HH^{-1/2}(\Gamma_N;\C)$. For $s\in \C_+$, the solution of
\begin{subequations}\label{eq:D.3}
\begin{alignat}{6}
	& \uu \in \HH^1(\Omega;\C),\qquad  \gamma_D \uu =\bs\alpha,\\
	& b(\uu,\ww;s)=(\mathbf f,\ww)_\Omega+\langle \bs\beta,\gamma\ww\rangle_{\Gamma_N} \quad \forall \ww\in \HH^1_D(\Omega;\C),
\end{alignat}
\end{subequations}
satisfies
\[
\triple\uu{|s|}
	\le  \frac{C}{\psi_\star(\Re s)}\Big(    \|\mathbf f\|_\Omega 
	 + \frac{|s|^{3/2+r}\phi_\star(\Re s)}{\min\{1,\Re s\}^{1/2}} 
	 	\| \bs\alpha\|_{1/2,\Gamma_D}
	 	 + \frac{|s|}{\min\{1,\Re s\}} \|\bs\beta\|_{-1/2,\Gamma_N} \Big),
\]
for a certain constant $C$ depending on $\rho$ and the geometry. 
\end{theorem}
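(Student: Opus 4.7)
The plan is a standard lift-plus-Lax--Milgram approach, with coercivity supplied by the substitution $\ww\mapsto\overline{s\ww}$ so that \eqref{eq:5.2a} applies directly. First I would lift the Dirichlet data: applying the preceding lemma with $c=|s|$ produces $\widehat\uu\in\HH^1(\Omega;\C)$ with $\gamma_D\widehat\uu=\bs\alpha$ and
$$
\triple{\widehat\uu}{|s|}\le C_{\Omega,\rho}\max\{1,|s|\}^{1/2}\|\bs\alpha\|_{1/2,\Gamma_D}\le C\,\frac{|s|^{1/2}}{\min\{1,\Re s\}^{1/2}}\|\bs\alpha\|_{1/2,\Gamma_D},
$$
where the simplification uses $\max\{1,|s|\}=|s|/\min\{1,|s|\}\le|s|/\min\{1,\Re s\}$. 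Setting $\uu_0:=\uu-\widehat\uu\in\HH^1_D(\Omega;\C)$ reduces \eqref{eq:D.3} to the homogeneous Dirichlet problem
$$
b(\uu_0,\ww;s)=(\mathbf f,\ww)_\Omega+\langle\bs\beta,\gamma\ww\rangle_{\Gamma_N}-b(\widehat\uu,\ww;s)\qquad\forall\ww\in\HH^1_D(\Omega;\C).
$$

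To exploit \eqref{eq:5.2a}--\eqref{eq:5.2b} I would introduce the sesquilinear form $B(\uu,\ww):=b(\uu,\overline{s\ww};s)$ on $\HH^1_D(\Omega;\C)$, noting that $\ww\mapsto\overline{s\ww}$ is an antilinear bijection with $\triple{\overline{s\ww}}{|s|}=|s|\triple\ww{|s|}$. Proposition \ref{prop:5.1} then yields
$$
\Re B(\uu,\uu)\ge\psi_\star(\Re s)\triple\uu{|s|}^2,\qquad|B(\uu,\ww)|\le|s|^{r+1}\phi_\star(\Re s)\triple\uu{|s|}\triple\ww{|s|},
$$
so Lax--Milgram provides a unique $\uu_0\in\HH^1_D(\Omega;\C)$ satisfying, upon testing with $\overline{s\uu_0}$,
$$
\psi_\star(\Re s)\triple{\uu_0}{|s|}^2\le\bigl|(\mathbf f,\overline{s\uu_0})_\Omega+\langle\bs\beta,\gamma\overline{s\uu_0}\rangle_{\Gamma_N}-b(\widehat\uu,\overline{s\uu_0};s)\bigr|.
$$

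The remaining work is accounting of powers of $|s|$ and $\Re s$. The volume term is controlled by $|s|\rho_0^{1/2}\|\uu_0\|_\Omega\le\triple{\uu_0}{|s|}$, producing a bound of order $\rho_0^{-1/2}\|\mathbf f\|_\Omega\triple{\uu_0}{|s|}$. For the Neumann term, combining Korn's second inequality, the trace theorem, and \eqref{eq:D.00} yields $\|\gamma\ww\|_{1/2,\Gamma_N}\le C\min\{1,\Re s\}^{-1}\triple\ww{|s|}$ on $\HH^1_D(\Omega;\C)$, giving the expected factor $C|s|\min\{1,\Re s\}^{-1}\|\bs\beta\|_{-1/2,\Gamma_N}$. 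The lift term, handled by the continuity estimate for $B$ together with the bound on $\triple{\widehat\uu}{|s|}$ from step one, contributes $C|s|^{r+3/2}\phi_\star(\Re s)\min\{1,\Re s\}^{-1/2}\|\bs\alpha\|_{1/2,\Gamma_D}$. Dividing by $\psi_\star(\Re s)\triple{\uu_0}{|s|}$ gives the advertised bound for $\uu_0$; finally $\triple\uu{|s|}\le\triple{\uu_0}{|s|}+\triple{\widehat\uu}{|s|}$, and the crude inequality $\psi_\star(\Re s)\le\Re s\le|s|^{r+1}\phi_\star(\Re s)$ (which follows from $\phi_\star(x)\ge x^{-r}$) absorbs $\triple{\widehat\uu}{|s|}$ into the $\bs\alpha$-contribution.

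The main obstacle is not conceptual: the coercivity trick (testing against $\overline{s\uu}$) is dictated by the structure of \eqref{eq:5.2a}, and existence follows mechanically from Lax--Milgram. What requires care is the bookkeeping of the four competing factors $|s|$, $\Re s$, $\psi_\star$, and $\phi_\star$, so that the three right-hand-side contributions can be stated in the compact form of the theorem and the lift norm can be genuinely absorbed into the Dirichlet term.
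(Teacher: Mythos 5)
Your proposal is correct and follows essentially the same route as the paper: the lifting lemma with $c=|s|$ for the Dirichlet data, the test function $\overline{s\uu_0}$ combined with the coercivity and boundedness estimates of Proposition \ref{prop:5.1}, Korn's inequality with \eqref{eq:D.00} for the Neumann term, and the absorption of $\triple{\widehat\uu}{|s|}$ via $\psi_\star(\Re s)\le |s|^{r+1}\phi_\star(\Re s)$. The only cosmetic difference is that you lift once and treat all three data in a single homogeneous-Dirichlet problem, whereas the paper splits into three subproblems by linearity; the resulting estimates are identical.
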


\begin{proof}
The solution of \eqref{eq:D.3} for data $(\mathbf f,\bs\beta,\bs\alpha)$ can be decomposed as the sum of the solutions for $(\mathbf f,\mathbf 0,\mathbf 0)$, $(\mathbf 0,\bs\beta,\mathbf 0)$ and $(\mathbf 0,\mathbf 0,\bs\alpha)$. For the first one, we note that $\uu\in \HH^1_D(\Omega;\C)$ satisfies
\[
b(\uu,\ww;s)=(\mathbf f,\ww)_\Omega \quad\forall \ww\in \HH^1_D(\Omega;\C),
\]
and we can use $\ww=\overline{s\uu}$ as test function. Applying \eqref{eq:5.2a}, it follows that
\begin{alignat*}{6}
\psi_\star(\Re s) \triple\uu{|s|}^2
	\le & \Re b(\uu,\overline{s\uu};s)=\Re (\mathbf f,\overline{s\uu})_\Omega \\
	\le & | (\rho^{-1/2}\mathbf f,\rho^{1/2}\overline{s\uu})_\Omega|
	\le  \|\rho^{-1/2}\mathbf f\|_\Omega \triple\uu{|s|}.
\end{alignat*}
For the second one, we use the same argument to bound
\begin{alignat*}{6}
\psi_\star(\Re s)\triple\uu{|s|}^2
	\le & |\langle\bs\beta,\overline s\gamma\overline\uu\rangle_{\Gamma_N}| 
	\le  |s| \|\bs\beta\|_{-1/2,\Gamma_N} \| \gamma\uu\|_{1/2,\Gamma_N} \\
	\le & C_1 |s| \|\bs\beta\|_{-1/2,\Gamma_N}\|\uu\|_{1,\Omega}
	\le  C_2 |s| \|\bs\beta\|_{-1/2,\Gamma_N}\triple\uu1 \\
	\le & C_2 \frac{|s|}{\min\{1,\Re s\}}\|\bs\beta\|_{-1/2,\Gamma_N} \triple\uu{|s|},
\end{alignat*}
where we have used Korn's inequality and \eqref{eq:D.00}.
For the final one, we write $\uu=\widehat\uu+\uu_0$, where $\widehat\uu=\widehat\uu(\bs\alpha,|s|)$ is the solution of \eqref{eq:D.1} with $c=|s|$ and $\uu_0\in \HH^1_D(\Omega;\C)$. Then
\[
b(\uu_0,\ww;s)=-b(\widehat\uu,\ww;s) \qquad\forall \ww\in \HH^1_D(\Omega;\C).
\]
Taking $\ww=\overline{s\uu_0}$ above, and using \eqref{eq:5.2b}, we can bound
\[
\psi_\star(\Re s) \triple{\uu_0}{|s|}^2 
	\le | b(\widehat\uu,\uu_0;s)| 
	\le |s|^r \phi_\star(\Re s) \triple{\widehat\uu}{|s|}\,|s|\, \triple{\uu_0}{|s|}.
\]
Therefore, 
\begin{alignat*}{6}
\triple\uu{|s|}\le \triple{\widehat\uu}{|s|}+\triple{\uu_0}{|s|}
	\le 
	&	\left(1+ 
	\frac{|s|^{r+1}\phi_\star(\Re s)}{\psi_\star(\Re s)}\right) \triple{\widehat\uu}{|s|} \\
	\le &  C_{\Omega,\rho}
	\left(1+ \frac{|s|^{r+1}\phi_\star(\Re s)}{\psi_\star(\Re s)}\right) \max\{1,|s|\}^{1/2}
	 \|\bs\alpha\|_{1/2,\Gamma_D}.
\end{alignat*}
Using \eqref{eq:D.0} and 
\begin{alignat*}{6}
\left(1+ \frac{|s|^{r+1}\phi_\star(\Re s)}{\psi_\star(\Re s)}\right)
\le & \frac{|s|}{\psi_\star(\Re s)} (1+|s|^r\phi_\star(\Re s)) \\
\le & \frac{2|s|^{r+1}}{\psi_\star(\Re s)}\max\{ (\Re s)^{-r},\phi_\star(\Re s)\}
	=\frac{2|s|^{r+1}}{\psi_\star(\Re s)}\phi_\star(\Re s),
\end{alignat*}
the result follows.
\end{proof}

\section{Distributional propagation of viscoelastic waves}\label{sec:DIST}

In this section we show how the transfer function studied in Section \ref{sec:5} (specifically in Theorem \ref{th:5.3}) is the Laplace domain transform of the solution operator for a distributional version of the viscoelastic wave propagation problem \eqref{eq:6.3} (cf.\ Proposition \ref{prop:6.4} below). We start with some language about vector-valued distributions, borrowed from \cite{Sayas2016}. 

\subsection{Background on operator valued distributions}

Given a real Hilbert space $X$, its {\bf complexification} $X_\C:=X+\imath X$ is a complex Hilbert space that is isometric to $X\times X$
\[
\|x_1+\imath x_2\|_{X_\C}^2:=\|x_1\|^2_X+\|x_2\|^2_X, \qquad \forall x_1,x_2\in X,
\]
with the product by complex scalars defined in the natural way. The complexification $X_\C$ has a naturally defined conjugation, which is a conjugate linear isometric involution in $X_\C$. The Lebesgue and Sobolev spaces of complex-valued functions that we have used in Section \ref{sec:5} are complexifications of the corresponding real spaces.
If $X$ and $Y$ are real Hilbert spaces, the space of bounded linear operators $\mathcal B(X_\C,Y_\C)$ can be understood as the subspace of $\mathcal B(X^2,Y^2)$ formed by matrices of operators of the form
\[
\begin{pmatrix} A & - B \\ B & A \end{pmatrix} \qquad A,B\in \mathcal B(X,Y).
\]
This is easily seen to be isomorphic (with an equivalent but not equal norm) to the complexification of the Banach space $\mathcal B(X,Y)$. (For the problem of the many possible equivalent complexifications of real Banach spaces, see \cite{MuSaTo1999}.) If $A\in \mathcal B(X_\C,Y_\C)$ we can define $\overline{A}\in \mathcal B(X_\C,Y_\C)$ by
\[
\overline A x:=\overline{A\overline x},
\]
where in the right hand side we use the natural conjugations of $X_\C$ and $Y_\C$. With this definition $\overline{A x}=\overline A\overline x.$

An $X$-valued {\bf tempered distribution} is a continuous linear map from the Schwartz class $\mathcal S(\R)$ to $X$. We say that the $X$-valued tempered distribution $h$ is causal, when the action of $h$ on any element of $\mathcal S(\R)$ supported in $(-\infty,0)$ is zero.
Causal tempered distributions have a {\bf well defined Laplace transform}, which is a holomorphic function $H=\mathcal L\{h\}:\C_+\to X_\C$ satisfying
\begin{equation}\label{eq:6.0}
\overline{H(s)}=H(\overline s) \qquad \forall s\in \C_+,
\end{equation}
where the conjugation on the left-hand side is the one in $X_\C$.
We will write $h\in \mathrm{TD}(X)$ whenever $h$ is an $X$-valued causal tempered distribution whose Laplace transform satisfies
\begin{equation}\label{eq:6.00}
\| H(s)\|_{X_\C} \le |s|^\mu \psi(\Re s) \qquad \forall s\in \C_+,
\end{equation}
where $\mu \in \R$ and $\psi:(0,\infty)\to (0,\infty)$ is non-increasing and at worst rational at the origin, i.e., $\sup_{0<x<1} x^k \psi(x)<\infty$ for some $k\ge 0$. When, instead of a Hilbert space $X$ and its complexification $X_\C$, we are dealing with bounded linear operators $\mathcal B(X_\C,Y_\C)$, the conjugation in \eqref{eq:6.0}  is the one for operators between complexified spaces.  
Some pertinent observations and results:
\begin{itemize}
\item[(a)] If $h\in \mathrm{TD}(X)$ and $T\in \mathcal B(X,Y)$ is a `steady-state' operator, then $Th\in \mathrm{TD}(Y)$. We can reverse the roles of $T$ and $h$ and show that if an operator valued distribution $T\in \mathrm{TD}(\mathcal B(X,Y))$ acts on a constant $h\in X$, it defines $Th\in \mathrm{TD}(Y)$.
\item[(b)] A simple argument using the formula for the inverse Laplace transform of $s^{-m}H(s)$, where $m$ is an integer chosen so that $\mu-m<-1$, can be used to characterize all these distributions (see \cite[Proposition 3.1.2]{Sayas2016}): $h\in \mathrm{TD}(X)$ if and only if there exists an integer $m\ge 0$ and a causal continuous function $g:\R\to X$ with polynomial growth at infinity such that $h=g^{(m)}$, with differentiation understood in the sense of tempered distributions. Moreover, if $H:\C_+\to X_\C$ is a holomorphic function satisfying \eqref{eq:6.0} and \eqref{eq:6.00} (with the conditions given for $\psi$), then $H=\mathcal L\{h\}$ for some $h\in \mathrm{TD}(X)$. 
\item[(c)] If $h\in \mathrm{TD}(\R)$ and $a\in X$, then the tensor product $a\otimes h$  defines a distribution in $\mathrm{TD}(X)$.
\end{itemize}

If $X$ and $Y$ are Banach spaces and $h\in \mathrm{TD}(\mathcal B(X,Y))$, then the {\bf convolution product} $h*\lambda$ is well defined for any $X$-valued causal distribution $\lambda$, independently on whether it is tempered or not \cite{Treves1967}. In the simpler case where $\lambda\in \mathrm{TD}(X)$, we have the convolution theorem
\[
\mathcal L\{ h*\lambda\}(s)=\mathcal L\{h\}(s) \mathcal L\{\lambda\}(s) \qquad \forall s\in \C_+,
\]
which can be used as an equivalent (and simple) definition of the convolution of $h*\lambda$, and we also have $h*\lambda\in \mathrm{TD}(Y)$, as can be easily proved from the definition. 

\subsection{Viscoelastic material law and wave propagator}

\begin{theorem}
If $\mathrm C:\C_+\to L^\infty(\Omega;\C^{d\times d\times d\times d})$ satisfies hypotheses \eqref{eq:C.1}, then there exists
\[
\mathcal C\in \mathrm{TD}(\mathcal B(\mathbb L^2(\Omega)) )
\]
such that
$\mathcal L\{ \mathcal C \,\mathrm M\}(s)=\CC(s)\mathrm M$ for all $\mathrm M\in \matR_\symm$.
For arbitrary $\uu \in \mathrm{TD}(\HH^1(\Omega))$, the convolution
\[
\mathcal C* \eps\uu \in \mathrm{TD}(\mathbb L^2(\Omega))
\]
is well defined. 
\end{theorem}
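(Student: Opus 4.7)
The plan is to realize $\CC(s)$, viewed as a multiplication operator on $\mathbb L^2(\Omega;\C)$, as a holomorphic $\mathcal B(\mathbb L^2(\Omega))_\C$-valued function on $\C_+$ satisfying the hypotheses of the Laplace-inversion characterization recorded as observation (b) in the background subsection, and then invoke that characterization to produce $\mathcal C$. Throughout I will tacitly use Hypothesis \ref{hypo3} as well: the growth bound is needed for the rational-at-zero, polynomial-at-infinity estimate \eqref{eq:6.00} (since \eqref{eq:C.1} alone gives no quantitative control).

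First I would check holomorphy. By hypothesis $\CC:\C_+\to L^\infty(\Omega;\C^{d\times d\times d\times d})$ is holomorphic, and by \eqref{eq:C20} the canonical embedding $L^\infty(\Omega;\C^{d\times d\times d\times d})\hookrightarrow \mathcal B(\mathbb L^2(\Omega;\C))$ (multiplication of a symmetric-matrix-valued $L^2$ function by the pointwise tensor) is a bounded linear map. Composition with a bounded linear map preserves holomorphy, so $s\mapsto \CC(s)\in \mathcal B(\mathbb L^2(\Omega;\C))$ is holomorphic. Next, the conjugation identity \eqref{eq:6.0} required for $X=\mathcal B(\mathbb L^2(\Omega))$ reads, with the operator conjugation recalled in the background, $\CC(\overline s)\overline{\mathrm M}=\overline{\CC(s)\mathrm M}$ for every $\mathrm M\in \mathbb L^2(\Omega;\C)$; this is precisely \eqref{eq:C.1a} applied pointwise almost everywhere, together with \eqref{eq:C.1c} to reduce the argument to symmetric matrices. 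Finally, the growth bound \eqref{eq:6.00} with $\mu=r$ and $\psi=\phi$ follows from Hypothesis \ref{hypo3} and \eqref{eq:C20}, since $\phi$ is already assumed to be at worst rational at the origin by \eqref{eq:phiRe}.

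Having verified the three bullet points, the vector-valued Paley--Wiener type characterization in observation (b), applied with $X=\mathcal B(\mathbb L^2(\Omega))$, yields a causal operator-valued tempered distribution $\mathcal C\in \mathrm{TD}(\mathcal B(\mathbb L^2(\Omega)))$ whose Laplace transform is exactly $\CC$. Given $\mathrm M\in \matR_\symm\subset\mathbb L^2(\Omega)$, observation (a) says that pairing $\mathcal C$ with the constant $\mathrm M$ produces $\mathcal C\mathrm M\in \mathrm{TD}(\mathbb L^2(\Omega))$, and commuting the Laplace transform with the (continuous, steady-state) evaluation-at-$\mathrm M$ map yields $\mathcal L\{\mathcal C\mathrm M\}(s)=\CC(s)\mathrm M$, as required.

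For the convolution statement, note that $\bs\varepsilon:\HH^1(\Omega)\to \mathbb L^2(\Omega)$ is a bounded linear (steady-state) operator, so by the first part of observation (a) the composition $\eps\uu$ is a well-defined element of $\mathrm{TD}(\mathbb L^2(\Omega))$ whenever $\uu\in \mathrm{TD}(\HH^1(\Omega))$. Since $\mathcal C$ is an operator-valued causal tempered distribution, Schwartz's theory of vector-valued distributional convolution (as cited from \cite{Treves1967}) guarantees that $\mathcal C\ast \eps\uu$ is well defined as a causal $\mathbb L^2(\Omega)$-valued distribution, and the convolution theorem stated at the end of the background section places it in $\mathrm{TD}(\mathbb L^2(\Omega))$ because its Laplace transform $\CC(s)\mathcal L\{\eps\uu\}(s)$ inherits the required polynomial-times-rational bound from the product of the bounds on each factor. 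The only delicate point in the whole argument is the passage from the hypotheses \eqref{eq:C.1} and \eqref{eq:phiRe}, which are stated pointwise in $\Omega$ for matrices, to the operator-level statements on $\mathcal B(\mathbb L^2(\Omega;\C))$; this is handled cleanly by \eqref{eq:C20} and the fact that $L^\infty$-multipliers act boundedly on $\mathbb L^2$.
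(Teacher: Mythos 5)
Your proposal is correct and follows essentially the same route as the paper: realize $\CC(s)$ as a holomorphic $\mathcal B(\mathbb L^2(\Omega;\C))$-valued function via \eqref{eq:C20}, verify the conjugation identity \eqref{eq:6.0} from \eqref{eq:C.1a}, invoke the Paley--Wiener-type characterization (observation (b)) to obtain $\mathcal C$, and then use observations (a) and the convolution theorem for the remaining claims. Your remark that Hypothesis \ref{hypo3} is tacitly needed to secure the growth bound \eqref{eq:6.00} is well taken --- the paper's proof uses it implicitly through $\|\CC(s)\|_{\max}$ even though the theorem statement only cites \eqref{eq:C.1}.
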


\begin{proof}
Let us first recall that for $s\in \C_+$, we have defined the operator
\[
L^2(\Omega;\matC_\symm)\ni \mathrm U \longmapsto
\mathrm C(s)\mathrm U \in L^2(\Omega;\matC_\symm)
\]
and that we have (see \eqref{eq:C20}) 
\[
\| \CC(s)\mathrm U\|_\Omega \le \|\CC(s)\|_{\max} \|\mathrm U\|_\Omega
\qquad \forall \mathrm U\in L^2(\Omega;\matC_\symm).
\]
Also (by \eqref{eq:C.1a})
\[
\overline{\CC(s)}\mathrm U =\overline{\CC(s)\overline{\mathrm U}}
=\CC(\overline s)\mathrm U.
\]
This means that
\[
\CC:\C_+\to \mathcal B(L^2(\Omega;\matC_\symm))
\]
satisfies the conditions \eqref{eq:6.0} and \eqref{eq:6.00} and therefore there exists
\[
\mathcal C\in\mathrm{TD}(\mathcal B(\mathbb L^2(\Omega)))
\] 
such that $\mathcal L\{\mathcal C\}=\CC$. If we fix $\mathrm M\in \matR_\symm$, we can easily show that the Laplace transform of $\mathcal C\mathrm M\in \mathrm{TD}(\mathbb L^2(\Omega))$ is $\mathrm C(s)\mathrm M$. Finally, if $\uu \in \mathrm{TD}(\HH^1(\Omega))$, then $\eps\uu\in \mathrm{TD}(\mathbb L^2(\Omega))$ and the convolution $\mathcal C*\eps\uu$ is well defined. 
\end{proof}

The expression $\bs\sigma=\mathcal C*\eps\uu$ can be equivalently written $\bs\sigma=\mathcal D*\eps{\dot\uu}$ where $\mathcal D\in \mathrm{TD}(\mathcal B(\mathbb L^2(\Omega)))$ is the distribution whose Laplace transform is $s^{-1}\mathrm C(s)$. In the simplest example (the purely elastic case), $\mathrm C(s)=\CC_0$, we can write $\mathcal C=\mathrm C_0\otimes \delta_0$ and $\mathcal D=\mathrm C_0\otimes H$, where $H$ is the Heaviside function. This yields the usual elastic law $\bs\sigma=\CC_0\eps\uu$. 

\begin{proposition}\label{prop:6.2}
For $s\in \C_+$, let us consider the solution map
\[
\mathrm S(s):\mathbf L^2(\Omega;\C)\times \HH^{1/2}(\Gamma_D;\C)\times \HH^{-1/2}(\Gamma_N;\C)
\to \HH^1(\Omega;\C)
\]
defined by $\mathbf u=\mathrm S(s)(\mathbf f,\bs\alpha,\bs\beta)$ being the solution of \eqref{eq:D.3}. The function
\[
\mathrm S:\C_+\to 
\mathcal B(\mathbf L^2(\Omega;\C)\times \HH^{1/2}(\Gamma_D;\C)\times \HH^{-1/2}(\Gamma_N;\C);
\HH^1(\Omega;\C))
\]
is analytic and $\mathrm S(\overline s)=\overline{\mathrm S(s)}$ for all $s\in \C_+$.
\end{proposition}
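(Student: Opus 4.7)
There are two claims to verify: the conjugation identity $\mathrm S(\overline s)=\overline{\mathrm S(s)}$ and the analyticity of $\mathrm S$ on $\C_+$. Both rest on three ingredients already available: Theorem \ref{th:5.3} (well-posedness and a quantitative bound on the solution map), Hypothesis \ref{hypo1}, in particular \eqref{eq:C.1a}, which gives $\overline{\CC(s)\MM}=\CC(\overline s)\overline{\MM}$ pointwise, and the standing holomorphy of $\CC:\C_+\to L^\infty(\Omega;\C^{d\times d\times d\times d})$.

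\textbf{Conjugation.} The identity $\overline{\CC(s)\MM}=\CC(\overline s)\overline\MM$ pushed into $b$ gives $\overline{b(\uu,\ww;s)}=b(\overline\uu,\overline\ww;\overline s)$ for all $\uu,\ww\in \HH^1(\Omega;\C)$ (using also that $\overline{s^2}=\overline s{}^2$ and that $\rho$ is real). Let $\uu:=\mathrm S(s)(\overline{\mathbf f},\overline{\bs\alpha},\overline{\bs\beta})$, so that $\gamma_D\uu=\overline{\bs\alpha}$ and the variational equation for $\uu$ with test function $\overline\ww$ holds. Conjugating the equation and using that $\ww\mapsto\overline\ww$ is a bijection on $\HH^1_D(\Omega;\C)$, one obtains that $\overline\uu$ is the $\mathrm S(\overline s)$-solution for data $(\mathbf f,\bs\alpha,\bs\beta)$. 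By the definition of the conjugate of an operator between complexified spaces, this is exactly $\overline{\mathrm S(s)}(\mathbf f,\bs\alpha,\bs\beta)$.

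\textbf{Analyticity.} Fix $s_0\in\C_+$ and data $(\mathbf f,\bs\alpha,\bs\beta)$, and write $\uu(s):=\mathrm S(s)(\mathbf f,\bs\alpha,\bs\beta)$. Because $\bs\alpha$ does not depend on $s$, the difference $\uu(s)-\uu(s_0)$ lies in $\HH^1_D(\Omega;\C)$ and, by subtracting the variational identities at $s$ and $s_0$,
\[
b(\uu(s)-\uu(s_0),\ww;s)=-\bigl((\CC(s)-\CC(s_0))\eps{\uu(s_0)},\eps\ww\bigr)_\Omega-(s^2-s_0^2)(\rho\,\uu(s_0),\ww)_\Omega.
\]
The candidate derivative $\uu'(s_0)\in\HH^1_D(\Omega;\C)$ is defined as the unique solution (via Theorem \ref{th:5.3} at $s=s_0$) of
\[
b(\uu'(s_0),\ww;s_0)=-(\CC'(s_0)\eps{\uu(s_0)},\eps\ww)_\Omega-2s_0(\rho\,\uu(s_0),\ww)_\Omega \qquad \forall \ww\in \HH^1_D(\Omega;\C).
\]
The strategy is then to estimate $v_s:=(s-s_0)^{-1}(\uu(s)-\uu(s_0))-\uu'(s_0)\in\HH^1_D(\Omega;\C)$, which satisfies a variational problem of the form \eqref{eq:D.3} with bilinear form $b(\cdot,\cdot;s)$, homogeneous boundary data, and a right-hand side that splits into two pieces: one measuring the mismatch between $(\CC(s)-\CC(s_0))/(s-s_0)$ and $\CC'(s_0)$ (together with the $s^2$-versus-$2s_0$ analogue), and one measuring the difference $b(\uu'(s_0),\ww;s)-b(\uu'(s_0),\ww;s_0)$. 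Both pieces tend to zero in the norm of $(\HH^1_D(\Omega;\C))'$ as $s\to s_0$ by the holomorphy of $\CC$ (applied to the $L^\infty$-valued map pointwise in $\Omega$). Theorem \ref{th:5.3}, whose bound is uniform on any compact subset of $\C_+$, then gives $\|v_s\|_{1,\Omega}\to 0$, establishing complex differentiability of $s\mapsto \mathrm S(s)(\mathbf f,\bs\alpha,\bs\beta)$ at $s_0$ with derivative $\uu'(s_0)$.

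\textbf{Expected obstacle.} The main technical point is converting the scalar holomorphy of the coefficients $C_{ijkl}(\cdot)$ in $L^\infty(\Omega;\C)$ into convergence of the two difference-quotient functionals in the dual norm on $\HH^1_D(\Omega;\C)$, uniformly enough that the estimate of Theorem \ref{th:5.3} yields norm convergence of $v_s$. Once this uniformity is in place, holomorphy of $\mathrm S$ as an $\mathcal B(\cdot;\HH^1(\Omega;\C))$-valued map follows from complex differentiability pointwise on data combined with the local boundedness already supplied by Theorem \ref{th:5.3}; alternatively, one can bypass this by invoking weak holomorphy plus local boundedness.
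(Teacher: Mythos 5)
Your proposal is correct, and the conjugation part matches the paper's argument (the paper phrases it as $\mathbb A(\overline s)=\overline{\mathbb A(s)}$ for the operator $\mathbb A(s)\uu:=b(\uu,\cdot\,;s)$, which is your identity $\overline{b(\uu,\ww;s)}=b(\overline\uu,\overline\ww;\overline s)$ in operator form). For analyticity, however, you take a genuinely different route. The paper avoids difference quotients entirely: it observes that $s\mapsto \mathbb A(s)\in\mathcal B(\HH^1(\Omega;\C),\HH^1_D(\Omega;\C)')$ is analytic because $\CC$ is, that the restriction $\mathbb A_0(s)=\mathbb A(s)|_{\HH^1_D}$ is invertible by the coercivity \eqref{eq:5.2a}, and that operator inversion $T\mapsto T^{-1}$ is analytic on the open set of bijective operators; writing $\mathrm S(s)(\mathbf f,\bs\alpha,\bs\beta)=L\bs\alpha+\mathbb A_0(s)^{-1}\bigl((\mathbf f,\cdot)_\Omega+\langle\bs\beta,\gamma\cdot\rangle_{\Gamma_N}-\mathbb A(s)L\bs\alpha\bigr)$ with a fixed lifting $L$ of $\gamma_D$ then exhibits $\mathrm S$ as a composition of analytic maps. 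This buys you exactly the thing you flag as the ``expected obstacle'': all the $\varepsilon$-management of the two difference-quotient functionals is absorbed into the single abstract fact that inversion is analytic, and no explicit candidate derivative is needed. Your hands-on argument does work, but one correction is in order: the variational problems defining $\uu'(s_0)$ and governing $v_s$ have right-hand sides that are general elements of $\HH^1_D(\Omega;\C)'$ (e.g.\ $-(\CC'(s_0)\eps{\uu(s_0)},\eps{\,\cdot\,})_\Omega$), not data triples of the form appearing in \eqref{eq:D.3}, so Theorem \ref{th:5.3} does not apply verbatim; what you should invoke is the coercivity estimate \eqref{eq:5.2a} (Lax--Milgram), which gives unique solvability and a bound $\|v_s\|_{1,\Omega}\lesssim\|\mathrm{RHS}\|_{\HH^1_D(\Omega;\C)'}$ with constants locally uniform in $s$ since $\psi_\star(\Re s)$ is bounded below on compact subsets of $\C_+$. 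Your final step --- upgrading strong (pointwise-in-data) differentiability plus local boundedness to norm analyticity of the operator-valued map --- is standard and sound.
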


\begin{proof}
For $s\in \C_+$ and $\uu\in \HH^1(\Omega;\C)$, we define
\[
\mathbb A(s)\uu:=b(\uu,\,\cdot\,;s)
=s^2(\rho\,\uu,\,\cdot\,)_\Omega+(\CC(s)\eps\uu,\eps{\,\cdot\,})_\Omega\in \HH^1_D(\Omega;\C)'.
\]
Using the analyticity of $\CC$ it is easy to see that
\[
\mathbb A:\C_+\to \mathcal B(\HH^1(\Omega;\C),\HH^1_D(\Omega;\C)')
\]
is analytic. Moreover, the operator $\mathbb A_0(s):=\mathbb A(s)|_{\HH^1_D(\Omega;\C)}$ is invertible for all $s\in \C_+$ by the coercivity of the bilinear form $b$ given in \eqref{eq:5.2a}. For any two Banach spaces, the inversion operator
\[
\{ T\in \mathcal B(X,Y)\,:\, \mbox{$T$ is bijective} \} \to \mathcal B(Y,X)
\]
is $\mathcal C^\infty$ and therefore the map $s\to \mathbb A_0(s)^{-1}$ is analytic from $\C_+$ to $\mathcal B(\HH^1_D(\Omega;\C)',\HH^1_D(\Omega;\C))$. If we now consider a bounded operator $L:\HH^{1/2}(\Gamma_D)\to \HH^1(\Omega)$ that is a right-inverse of $\gamma_D$ and its natural extension to complex-valued functions, we can easily write
\[
\mathrm S(s)(\mathbf f,\bs\alpha,\bs\beta)
=L\bs\alpha+
\mathbb A_0(s)^{-1}\left( (\mathbf f,\,\cdot\,)_\Omega+\langle\bs\beta,\gamma\,\cdot\,\rangle_{\Gamma_N}
-\mathbb A(s)L\bs\alpha\right),
\]
which shows that $\mathrm S$ is analytic. Finally \eqref{eq:C.1a} and a simple computation show that $\mathbb A(\overline s)=\overline{\mathbb A(s)}$ and therefore $\mathrm S(\overline s)=\overline{\mathrm S(s)}$.
\end{proof}

\begin{proposition}\label{prop:6.3}
For $s\in \C_+$, let us consider the solution map
\[
\mathrm T(s):\mathbf L^2(\Omega;\C)\times \HH^{1/2}(\Gamma_D;\C)\times \HH^{-1/2}(\Gamma_N;\C)
\to \HH^1(\Omega;\C)\times \mathbb H(\mathrm{div},\Omega;\C)
\]
defined by $\mathrm T(s)(\mathbf f,\bs\alpha,\bs\beta):=(\uu,\mathrm C(s)\eps\uu)$, where $\uu=\mathrm S(s)(\mathbf f,\bs\alpha,\bs\beta)$. The function
\[
\mathrm T:\C_+\to 
\mathcal B(\mathbf L^2(\Omega;\C)\times \HH^{1/2}(\Gamma_D;\C)\times \HH^{-1/2}(\Gamma_N;\C); \,
\HH^1(\Omega;\C)\times \mathbb H(\mathrm{div},\Omega;\C))
\]
is analytic and $\mathrm T(\overline s)=\overline{\mathrm T(s)}$ for all $s\in \C_+$. Finally
\[
\mathrm T(s)^{-1}(\uu,\bs\sigma)=(\rho\,s^2\,\uu-\mathrm{div}\bs\sigma,\gamma_D\uu,\gamma_N\bs\sigma)
\qquad \forall s\in \C_+.
\]
\end{proposition}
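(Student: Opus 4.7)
The plan is to prove the three claims in sequence, leaning on Proposition \ref{prop:6.2} and the weak Betti formula stated in Section \ref{sec:2}. The only real content beyond bookkeeping lies in checking that $\bs\sigma:=\CC(s)\eps\uu$ actually lands in $\mathbb H(\mathrm{div},\Omega;\C)$ and in recovering $\bs\beta$ and $\mathbf f$ from the pair $(\uu,\bs\sigma)$.

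First, to see that $\bs\sigma\in\mathbb H(\mathrm{div},\Omega;\C)$, I would restrict the variational identity defining $\uu=\mathrm S(s)(\mathbf f,\bs\alpha,\bs\beta)$ to test fields $\ww$ that are smooth and compactly supported in $\Omega$. Such $\ww$ lie in $\HH^1_D(\Omega;\C)$ and have vanishing trace on $\Gamma_N$, so the boundary term involving $\bs\beta$ drops out. Unfolding the definition of $b$, one obtains as a distributional identity on $\Omega$
$$
\rho\,s^2\uu-\mathrm{div}\,\bs\sigma=\mathbf f.
$$
Since $\rho s^2\uu$ and $\mathbf f$ both lie in $\mathbf L^2(\Omega;\C)$, so does $\mathrm{div}\,\bs\sigma$, and this identity is exactly the first component of the claimed inverse formula. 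The Dirichlet condition $\gamma_D\uu=\bs\alpha$ is built into the definition of $\mathrm S(s)$ and yields the second component.

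For the Neumann component, I would apply Betti's formula to $\bs\sigma\in\mathbb H(\mathrm{div},\Omega;\C)$ against arbitrary $\ww\in\HH^1_D(\Omega;\C)$, namely
$$
(\bs\sigma,\eps\ww)_\Omega+(\mathrm{div}\,\bs\sigma,\ww)_\Omega=\langle\gamma_N\bs\sigma,\gamma\ww\rangle_{\Gamma_N},
$$
and substitute the PDE from the previous step. The bulk terms collapse, reducing the variational equation to $\langle\gamma_N\bs\sigma-\bs\beta,\gamma\ww\rangle_{\Gamma_N}=0$ for all $\ww\in\HH^1_D(\Omega;\C)$; the very definition of $\widetilde\HH^{1/2}(\Gamma_N)$ as the image of $\gamma|_{\HH^1_D(\Omega)}$ then forces $\gamma_N\bs\sigma=\bs\beta$. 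Together with injectivity of $\mathrm T(s)$, which follows from uniqueness in \eqref{eq:D.3} (via \eqref{eq:5.2a}), this establishes the inverse formula.

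Finally, analyticity and the conjugation symmetry follow by composition. The $\HH^1$-component of $\mathrm T(s)$ is $\mathrm S(s)$, analytic by Proposition \ref{prop:6.2}; its $\mathbb L^2$-part $\CC(s)\eps{\mathrm S(s)(\cdot)}$ is the composition of an analytic $\mathcal B(\mathbb L^2(\Omega;\C))$-valued map with an analytic $\mathbb L^2$-valued map, while its divergence equals $\rho s^2\mathrm S(s)(\cdot)-\mathbf f$ by the first step and so is visibly analytic into $\mathbf L^2(\Omega;\C)$. The identity $\mathrm T(\overline s)=\overline{\mathrm T(s)}$ is then immediate from $\mathrm S(\overline s)=\overline{\mathrm S(s)}$ together with hypothesis \eqref{eq:C.1a}. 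I anticipate no serious obstacle; the only delicate point is the first step, which is precisely what upgrades the variational formulation to a genuine statement about the pair $(\uu,\bs\sigma)$ in $\HH^1(\Omega;\C)\times\mathbb H(\mathrm{div},\Omega;\C)$.
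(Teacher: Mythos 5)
Your proof is correct and follows essentially the same route as the paper: identify $(\uu,\CC(s)\eps\uu)$ as the solution of the transmission problem, read off $\mathrm{div}\,\bs\sigma=\rho s^2\mathrm S(s)(\cdot)-\mathbf f$ to get membership in $\mathbb H(\mathrm{div},\Omega;\C)$ and analyticity, and deduce the conjugation property and the (left-)inverse formula. The paper dispatches the trace recovery and the inverse formula as ``straightforward''; you have simply written out those details (the distributional identity from compactly supported test functions and the Betti-formula argument for $\gamma_N\bs\sigma=\bs\beta$), which is exactly what is intended.
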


\begin{proof}
If $\uu=\mathrm S(s)(\mathbf f,\bs\alpha,\bs\beta)$ and $\bs\sigma=\mathrm C(s)\eps\uu$, then
\begin{subequations}\label{eq:6.66}
\begin{alignat}{6}
& \uu\in \HH^1(\Omega;\C), \quad \bs\sigma\in \mathbb H(\mathrm{div},\Omega;\C),\\
& \rho\,s^2\uu=\mathrm{div}\,\bs\sigma +\mathbf f,\\
& \bs \sigma =\mathrm C(s)\eps{\uu},\\
& \gamma_D \uu=\bs\alpha, \quad \gamma_N\bs\sigma=\bs\beta.
\end{alignat}
\end{subequations}
Since $\mathrm{div}\,\bs\sigma=\rho\,s^2 \mathrm S(s)(\mathbf f,\bs\alpha,\bs\beta)-\mathbf f$ and $\mathrm C$ is analytic, it is clear that $\mathrm T$ is analytic. The conjugation property for $\mathrm T$ and the formula for its inverse are straightforward.
\end{proof}

\begin{proposition}\label{prop:6.4}
There exists a distribution
\[
\mathcal T\in \mathrm{TD}(\mathcal B(\mathbf L^2(\Omega)\times\HH^{1/2}(\Gamma_D)\times \HH^{-1/2}(\Gamma_N),
\HH^1(\Omega)\times \mathbb H(\mathrm{div},\Omega)))
\]
such that for all
\begin{equation}\label{eq:6.data}
\mathbf f\in \mathrm{TD}(\mathbf L^2(\Omega)),
	\qquad
\bs\alpha\in \mathrm{TD}(\HH^{1/2}(\Gamma_D)),
	\qquad
\bs\beta\in \mathrm{TD}(\HH^{-1/2}(\Gamma_N))
\end{equation}
the pair $(\uu,\bs\sigma)=\mathcal T*(\mathbf f,\bs\alpha,\bs\beta)$ is the unique solution to
\begin{subequations}\label{eq:6.3}
\begin{alignat}{6}
\label{eq:6.3a}
&\uu \in \mathrm{TD}(\HH^1(\Omega)), 
	\quad \bs\sigma\in \mathrm{TD}(\mathbb H(\mathrm{div},\Omega)),\\
& \rho\,\ddot\uu =\mathrm{div}\,\bs\sigma+\mathbf f,\\
&\bs\sigma=\mathcal C*\eps\uu,\\
&\gamma_D\uu =\bs\alpha, \quad \gamma_N\bs\sigma=\bs\beta.
\end{alignat}
\end{subequations}
\end{proposition}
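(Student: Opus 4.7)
The plan is to apply the distributional Laplace-transform characterization (item (b) of the background) to the operator-valued function $\mathrm T(s)$ built in Proposition \ref{prop:6.3}, interpret the Laplace-domain identities \eqref{eq:6.66} on the time-domain side via the convolution theorem, and finally verify uniqueness by injectivity of the Laplace transform on causal tempered distributions.

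First I would check that
\[
\mathrm T:\C_+\to \mathcal B(\mathbf L^2(\Omega;\C)\times \HH^{1/2}(\Gamma_D;\C)\times \HH^{-1/2}(\Gamma_N;\C),\;\HH^1(\Omega;\C)\times \mathbb H(\mathrm{div},\Omega;\C))
\]
fits the hypotheses of item (b): analyticity and $\mathrm T(\overline s)=\overline{\mathrm T(s)}$ are already recorded in Proposition \ref{prop:6.3}, so the only thing to verify is a bound of the form $\|\mathrm T(s)\|\le |s|^\mu \psi(\Re s)$ with $\psi$ non-increasing and at worst polynomial at the origin. The $\HH^1$-component of the output is controlled by Theorem \ref{th:5.3} together with \eqref{eq:D.00}. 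For the stress component I would use Hypothesis \ref{hypo3} to bound $\|\bs\sigma\|_\Omega=\|\CC(s)\eps\uu\|_\Omega$, and then the momentum identity $\mathrm{div}\,\bs\sigma=\rho\,s^2\uu-\mathbf f$ to bound $\|\mathrm{div}\,\bs\sigma\|_\Omega$. Combining these three estimates yields a single bound on $\|\mathrm T(s)\|$ of the required shape, with some negative power of $\min\{1,\Re s\}$ and some positive power of $|s|$.

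Item (b) then supplies a distribution $\mathcal T$ in the TD-space named in the statement with $\mathcal L\{\mathcal T\}=\mathrm T$. For data as in \eqref{eq:6.data}, I would set $(\uu,\bs\sigma):=\mathcal T*(\mathbf f,\bs\alpha,\bs\beta)$; the convolution theorem yields $(\uu,\bs\sigma)\in \mathrm{TD}(\HH^1(\Omega))\times \mathrm{TD}(\mathbb H(\mathrm{div},\Omega))$ with Laplace transform $\mathrm T(s)(\mathcal L\{\mathbf f\}(s),\mathcal L\{\bs\alpha\}(s),\mathcal L\{\bs\beta\}(s))$ on $\C_+$. Each identity in \eqref{eq:6.66}, namely the momentum equation, the constitutive law $\bs\sigma=\CC(s)\eps\uu$, and the two boundary conditions, translates back to \eqref{eq:6.3} using three facts: multiplication by $s^2$ corresponds to distributional time-differentiation of order two; $\mathcal L\{\mathcal C*\eps\uu\}=\CC(s)\mathcal L\{\eps\uu\}$ follows from the theorem preceding this proposition together with item (a) of the background; and $\gamma_D,\gamma_N$ are steady-state operators, so they commute with Laplace transformation.

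For uniqueness, any two solutions of \eqref{eq:6.3} for the same data have a difference lying in the same TD spaces with vanishing data, so Laplace transformation converts this difference into a solution of \eqref{eq:D.3} together with the constitutive relation with zero right-hand side; Proposition \ref{prop:6.3} provides the inverse of $\mathrm T(s)$, forcing both Laplace transforms to vanish on $\C_+$, hence the distributions themselves vanish. The main obstacle I expect is the bookkeeping for the polynomial estimate on $\|\mathrm T(s)\|$: Theorem \ref{th:5.3} produces three different bounds depending on which piece of data is active, and these must be merged into a single estimate of the type needed in \eqref{eq:6.00}, naturally leading to functions of the kind $\psi_\star,\phi_\star$ built in Proposition \ref{prop:5.1}. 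Once that single bound is in place, the remaining arguments are essentially bookkeeping with the Laplace formalism.
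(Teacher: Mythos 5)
Your proposal is correct and follows essentially the same route as the paper: a polynomial-in-$|s|$, rational-in-$\Re s$ bound on $\|\mathrm T(s)\|$ assembled from Theorem \ref{th:5.3}, Korn's inequality, the material-law bound and the momentum identity, followed by the inversion result for causal tempered distributions, translation of \eqref{eq:6.66} back to the time domain, and uniqueness via unique solvability of the Laplace-domain system. The paper's proof adds only an explicit alternative verification using the distribution $\mathcal E$ with $\mathcal L\{\mathcal E\}(s)=\mathrm T(s)^{-1}$, which is the same idea you invoke for uniqueness.
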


\begin{proof}
Let $(\bs U,\bs\Sigma)=\mathrm T(s)(\bs F,\bs A,\bs B)$. 
We first estimate
\[
\|\bs\Sigma \|_\Omega+\|\mathrm{div}\,\bs\Sigma \|_\Omega
\le |s|^r\phi(\Re s) \|\eps{\bs U }\|_\Omega+|s|^2 \|\rho\,\bs U \|_\Omega+\|\bs F\|_\Omega.
\]
By Korn's inequality, \eqref{eq:D.00} and Theorem \ref{th:5.3}, we can also bound
\begin{alignat*}{6}
\| \bs U \|_{1,\Omega}
	\le & C \triple{\bs U }1  
	\le  \frac{C}{\min\{1,\Re s\}}\triple{\bs U }{|s|}   \\
	\le & \phi_1(\Re s) \| \bs F \|_\Omega  
	 + |s|^{3/2+r}\phi_2(\Re s) \|\bs A \|_{1/2,\Gamma_D}
	 + |s|\phi_3(\Re s) \|\bs B \|_{-1/2,\Gamma_N},
\end{alignat*}
where
\[
\phi_1(x):= \frac{C_1}{\psi_\star(x)\min\{1,x\}},
	\quad
\phi_2(x):=\frac{C_2\phi_\star(x)}{\psi_\star(x)\min\{1,x^{3/2}\}},
	\quad
\phi_3(x):=\frac{C_3}{\psi_\star(x) \min\{1,x^2\}}.
\]
The above estimates give an upper bound for the norm of $\|\mathrm T(s)\|$, which together with Proposition \ref{prop:6.3} shows the existence of $\mathcal T$ such that $\mathrm T=\mathcal L\{\mathcal T\}$. To prove that $(\uu,\bs\sigma)=\mathcal T*(\mathbf f,\bs\alpha,\bs\beta)$ solves \eqref{eq:6.3}, we can just take Laplace transforms and use the definition of $\mathrm T(s)$. We next give an alternative proof that will be used for some arguments later on. Note first that $\bs\sigma=\mathcal C*\eps\uu$, as follows from the definition of $\mathrm T(s)$. Note also that there exists
\[
\mathcal E\in \mathrm{TD}(
\mathcal B(\HH^1(\Omega)\times \mathbb H(\mathrm{div},\Omega),
\mathbf L^2(\Omega)\times\HH^{1/2}(\Gamma_D)\times \HH^{-1/2}(\Gamma_N))
)
\]
such that $\mathcal L\{\mathcal E\}(s)=\mathrm T(s)^{-1}$ for all $s\in \C_+$. In fact,
\[
\mathcal E*(\mathbf u,\bs\sigma)=(\rho\ddot\uu-\mathrm{div}\,\bs\sigma,\gamma_D\uu,\gamma_N\bs\sigma)
\]
for all $(\mathbf u,\bs\sigma)\in \mathrm{TD}(\HH^1(\Omega)\times \mathbb H(\mathrm{div},\Omega))$. Since $\mathrm T(s)^{-1}\mathrm T(s)$ is the identity operator for all $s\in \C_+$, it follows that
\[
\mathcal E*\mathcal T*(\mathbf f,\bs\alpha,\bs\beta)=(\mathbf f,\bs\alpha,\bs\beta)
\]
and therefore $(\uu,\bs\sigma)$ solves \eqref{eq:6.3}. Finally, if $(\uu,\bs\sigma)$ solves \eqref{eq:6.3}, then
$\bs U=\mathcal L\{\uu\}$ and $\bs\Sigma=\mathcal L\{ \bs\sigma\}$ satisfy
\begin{subequations}\label{eq:6.6}
\begin{alignat}{6}
& \bs U(s)\in \HH^1(\Omega;\C), \quad \bs\Sigma(s)\in \mathbb H(\mathrm{div},\Omega;\C),\\
& \rho\,s^2\bs U(s)=\mathrm{div}\,\bs\Sigma(s)+\bs F(s),\\
& \bs \Sigma(s)=\mathrm C(s)\eps{\bs U(s)},\\
& \gamma_D \bs U(s)=\bs A(s), \quad \gamma_N\bs\Sigma(s)=\bs B(s),
\end{alignat}
\end{subequations}
for all $s\in \C_+$, where $\bs F=\mathcal L\{\mathbf f\}$, $\bs A=\mathcal L\{\bs\alpha\}$, and $\bs B=\mathcal L\{\bs\beta\}$.  Since \eqref{eq:6.6} is uniquely solvable, this proves uniqueness of \eqref{eq:6.3}.
\end{proof}

This general result about the weak (distributional) version of the viscoelastic wave propagation problem includes the possibility of adding non-homogeneous initial conditions for the displacement and the velocity, since they are just included in the volume forcing function $\mathbf f$. These non-zero conditions will make the distributional solution of \eqref{eq:6.3} non-smooth at time $t=0$ and therefore the estimates of Section \ref{sec:7} will not be valid.

\section{Estimates in time}\label{sec:7}

In this section we translate the estimates for the transfer function given in Theorem \ref{th:5.3} into time-domain estimates.
The following theorem rephrases \cite[Proposition 3.2.2]{Sayas2016}, which is an inversion theorem for the Laplace transform of causal convolution operators. 

\begin{theorem}\label{thm:7.1}
Let $\mathrm F:\C_+\to \mathcal B(X,Y)$ be a holomorphic function valued in the space of bounded linear operators between two Hilbert spaces and assume that
\begin{equation}\label{eq:7.0}
\| \mathrm F(s)\|_{X\to Y}\le |s|^{m+\mu}\varphi(\Re s) \qquad \forall s\in \C_+,
\end{equation}
where:
\begin{itemize}
\item[(a)] $m\ge 0$ is an integer, and $\mu\in [0,1)$,
\item[(b)] $\varphi:(0,\infty)\to (0,\infty)$ is non-increasing and $\sup_{0< x< 1} x^k \varphi(x)<\infty$ for some $k\ge 0$.
\end{itemize}
If $\lambda\in \mathcal C^{m+1}(\R;X)$ is a causal function such that $\lambda^{(m+2)}$ is integrable, then the unique $Y$-valued causal function $u$ such that $\mathcal L\{u\}=\mathrm F \mathcal L\{ \lambda\}$ is continuous and satisfies
\begin{equation}\label{eq:7.1}
\| u(t)\|_Y \le C_\mu \left(\frac{t}{1+t}\right)^\mu \varphi(t^{-1}) 
\sum_{\ell=m}^{m+2}\int_0^t \|\lambda^{(\ell)}(\tau)\|_X \mathrm d\tau
	\quad\forall t\ge 0.
\end{equation}
\end{theorem}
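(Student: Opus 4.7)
The theorem rephrases \cite[Proposition~3.2.2]{Sayas2016}, and my approach reproduces its Laplace-inversion strategy. The starting point is the Paley--Wiener characterization recalled after~\eqref{eq:6.00}: the growth hypothesis on $\mathrm F$ guarantees a unique causal operator-valued tempered distribution $\mathcal F \in \mathrm{TD}(\mathcal B(X,Y))$ with $\mathcal L\{\mathcal F\} = \mathrm F$, and the convolution $u := \mathcal F * \lambda$ then defines the unique causal $u$ satisfying $\mathcal L\{u\} = \mathrm F\,\mathcal L\{\lambda\}$.

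For the quantitative bound, I would factor the polynomial growth out of $\mathrm F$ by writing $\mathrm F(s) = G(s)\,p(s)$ with the polynomial $p(s) := s^m(1+s)^2 = s^m + 2 s^{m+1} + s^{m+2}$. Then
\[
\|G(s)\|_{X\to Y} \;\le\; \frac{|s|^\mu\, \varphi(\Re s)}{|1+s|^2},
\]
which is integrable on every vertical line $\Re s = c > 0$ since $\mu < 1$. Paley--Wiener applied to $G$ yields a continuous causal $g : \R \to \mathcal B(X,Y)$ with $\mathcal L\{g\} = G$. Causality together with $\lambda \in \mathcal C^{m+1}(\R;X)$ gives $\lambda^{(k)}(0) = 0$ for $k \le m+1$, so $\mathcal L\{\lambda^{(k)}\}(s) = s^k \mathcal L\{\lambda\}(s)$ for $k \le m+2$. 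The convolution theorem then identifies
\[
u \;=\; g * \bigl(\lambda^{(m)} + 2\,\lambda^{(m+1)} + \lambda^{(m+2)}\bigr),
\]
which already accounts for the three derivative terms on the right-hand side of~\eqref{eq:7.1}.

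It remains to bound $\|g(t)\|_{X\to Y}$. For any $c > 0$ the Bromwich representation $g(t) = (2\pi\imath)^{-1}\int_{c-\imath\infty}^{c+\imath\infty} e^{st}G(s)\,ds$ is valid, whence
\[
\|g(t)\|_{X\to Y} \;\le\; \frac{e^{ct}}{2\pi}\,\varphi(c) \int_{-\infty}^\infty \frac{|c+\imath\tau|^\mu}{|1+c+\imath\tau|^2}\,d\tau.
\]
The change of variables $\tau = (1+c)y$ transforms the remaining integral into $(1+c)^{\mu-1}\,J_\mu\bigl(c/(1+c)\bigr)$ with $J_\mu$ uniformly bounded on $[0,1]$ (its finiteness uses $\mu < 1$). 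Optimizing by taking $c = 1/t$ kills the exponential ($e^{ct}=e$) and produces a pointwise estimate $\|g(t)\|\le C_\mu\, \varphi(1/t)\,\bigl(t/(1+t)\bigr)^{\alpha}$ for some $\alpha=\alpha(\mu)\in(0,1)$. Inserting this into the convolution identity and pulling the decay factor past the integral using the monotonicity of $\varphi$ (so that $\varphi(1/(t-\sigma))\le\varphi(1/t)$ for $\sigma\in[0,t]$) and of $x\mapsto x/(1+x)$ then yields~\eqref{eq:7.1}.

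The main obstacle is the Bromwich estimate above. A naive contour shift at $c = 1/t$ gives $\alpha = 1-\mu$, which already implies the claimed exponent $\alpha = \mu$ when $\mu\le 1/2$; obtaining the sharp bound uniformly for $\mu \in [0,1)$ requires either a finer choice of the auxiliary polynomial $p$, splitting the vertical integral into the regimes $|\tau|\le c$ and $|\tau|>c$, or treating the ranges $\mu\in[0,1/2]$ and $\mu\in(1/2,1)$ separately. Once the correct pointwise decay of $g$ has been established, the remaining manipulations are bookkeeping.
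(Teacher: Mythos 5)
Your plan reproduces, essentially step for step, the argument behind \cite[Proposition 3.2.2]{Sayas2016}, which is all the paper itself offers in place of a proof (the theorem is explicitly introduced as a rephrasing of that result). The factorization $\mathrm F(s)=G(s)\,s^m(1+s)^2$, the identity $u=g*(\lambda^{(m)}+2\lambda^{(m+1)}+\lambda^{(m+2)})$ using $\lambda^{(k)}(0)=0$ for $k\le m+1$, the Bromwich bound with the contour at $\Re s=1/t$, and the monotonicity of $\varphi$ and of $x\mapsto x/(1+x)$ to pull the prefactor out of the convolution are exactly the right moves, and you execute them correctly.

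The one step you leave open is the exponent for $\mu>1/2$, and there the obstruction is not in your argument but in the statement. The bound your computation produces, $\|g(t)\|\le C_\mu\,(t/(1+t))^{1-\mu}\varphi(1/t)$, is the correct and sharp one; $1-\mu$ is the exponent that actually appears in the cited proposition, and the $\mu$ in \eqref{eq:7.1} should be read as a transcription slip. None of the three repairs you propose can succeed, because the inequality with exponent $\mu$ is false for $\mu\in(1/2,1)$: take $X=Y=\C$, $\mathrm F(s)=s^\mu$ (so $m=0$, $\varphi\equiv1$) and $\lambda(t)=t^2/2$ on $[0,1]$, extended causally and smoothly truncated after $t=1$. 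Then $u=\partial^\mu\lambda$ gives $u(t)=t^{2-\mu}/\Gamma(3-\mu)$ for $t\le 1$, while the right-hand side of \eqref{eq:7.1} is at most $2C_\mu t^{\mu}\cdot t=2C_\mu t^{1+\mu}$ as $t\to 0$, and $t^{2-\mu}/t^{1+\mu}=t^{1-2\mu}\to\infty$ when $\mu>1/2$. So stop trying to reach $(t/(1+t))^\mu$ and simply prove the estimate with $(t/(1+t))^{1-\mu}$, which your sketch already does; since $(t/(1+t))^{1-\mu}\le(t/(1+t))^{\mu}$ precisely when $\mu\le 1/2$, and the only values invoked downstream in Theorem \ref{th:7.3} are $\mu=0$ and $\mu=1/2$, every application of the theorem in the paper survives unchanged.
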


The integrability of the $(m+2)$-th derivative of $\lambda$ can be relaxed to local integrability, but in that case we need to add a hypothesis concerning the growth of $\|\lambda^{(m+2)}(t)\|_X$, since we need to be able to take Laplace transforms in $\C_+$. In any case, since all the processes that we are dealing with are causal (the solution depends on the past values of the data, and never on the future ones), the result holds in finite intervals of time assuming only local integrability of the last derivative. Note also that if $X$ and $Y$ are complexifications of real spaces, the process in the time domain is real-valued for real-valued data when we have $\overline{\mathrm F(s)}=\mathrm F(\overline s)$ for all $s$. 

We now consider Sobolev spaces
\[
W^{m,1}_+(0,\infty;X)
	:=\{ f\in \mathcal C^{m-1}([0,\infty);X)\,:\, f^{(m)}\in L^1(0,\infty;X),
		\, f^{(\ell)}(0)=0 \quad \ell\le m-1\},
\]
where $X$ is any Hilbert space.

\begin{corollary}\label{cor:7.2}
Let $\mathrm F$ be as in Theorem \ref{thm:7.1} and $\lambda\in W^{m+2,1}_+(0,\infty;X)$. If $\widetilde\lambda:\mathbb R\to X$ is the trivial extension of $\lambda$ to $(-\infty,0)$, then there is a unique $u\in \mathcal C([0,\infty);Y)$ such that $u(0)=0$ and $\mathcal L\{ \widetilde u\}=\mathrm F\mathcal L\{\widetilde\lambda\}$. Moreover, the estimates \eqref{eq:7.1} hold. 
\end{corollary}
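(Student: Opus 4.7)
The plan is to reduce directly to Theorem \ref{thm:7.1} by checking that the trivial extension $\widetilde\lambda$ meets the hypotheses of that theorem, and that the corollary's statement is just a repackaging once one tracks boundary behaviour at $t=0$.

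First I would verify the regularity of $\widetilde\lambda$. The definition of $W^{m+2,1}_+(0,\infty;X)$ provides $\lambda \in \mathcal C^{m+1}([0,\infty);X)$ with $\lambda^{(\ell)}(0)=0$ for $\ell=0,\ldots,m+1$ and $\lambda^{(m+2)} \in L^1(0,\infty;X)$. Extending $\lambda$ by zero across the origin and arguing by induction on $\ell$ (each vanishing boundary value is exactly what is needed for the previously-constructed extension of $\lambda^{(\ell-1)}$ to have a continuous derivative at $0$), one obtains $\widetilde\lambda \in \mathcal C^{m+1}(\mathbb R;X)$, causal by construction. The distributional derivative $\widetilde\lambda^{(m+2)}$ coincides almost everywhere with the trivial extension of $\lambda^{(m+2)}$, so $\widetilde\lambda^{(m+2)} \in L^1(\mathbb R;X)$.

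Next I would apply Theorem \ref{thm:7.1} with $\widetilde\lambda$ as input. This yields a unique continuous causal function $v : \mathbb R \to Y$ with $\mathcal L\{v\} = \mathrm F\,\mathcal L\{\widetilde\lambda\}$ satisfying the bound \eqref{eq:7.1} with $\widetilde\lambda$ in place of $\lambda$. Setting $u := v|_{[0,\infty)}$, continuity and causality of $v$ force $v(0) = \lim_{t\to 0^-} v(t) = 0$, hence $u \in \mathcal C([0,\infty);Y)$ with $u(0)=0$; moreover $\widetilde u = v$, so $\mathcal L\{\widetilde u\} = \mathrm F\,\mathcal L\{\widetilde\lambda\}$. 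Since $\widetilde\lambda^{(\ell)}$ agrees with $\lambda^{(\ell)}$ on $[0,\infty)$ and vanishes on $(-\infty,0)$, the integrals in \eqref{eq:7.1} are unchanged when $\lambda$ is used, yielding the stated estimate.

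For uniqueness, suppose $u_1, u_2 \in \mathcal C([0,\infty);Y)$ both satisfy $u_j(0)=0$ and $\mathcal L\{\widetilde u_j\}=\mathrm F\,\mathcal L\{\widetilde\lambda\}$. Because $u_j(0)=0$, each trivial extension $\widetilde u_j$ is continuous on $\mathbb R$ and causal; they share the same Laplace transform on $\mathbb C_+$, hence they coincide as causal tempered distributions and therefore as continuous functions. Restricting to $[0,\infty)$ gives $u_1=u_2$.

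The only mildly delicate point is the global $\mathcal C^{m+1}$-regularity of $\widetilde\lambda$, which is precisely why the vanishing conditions $\lambda^{(\ell)}(0)=0$ were baked into the definition of $W^{m+2,1}_+$; everything else is bookkeeping around Theorem \ref{thm:7.1}. I would also note in passing that $\widetilde\lambda$ is automatically Laplace-transformable on $\mathbb C_+$, since integrability of $\widetilde\lambda^{(m+2)}$ together with the vanishing initial conditions forces the lower derivatives $\widetilde\lambda^{(\ell)}$ to grow at most polynomially on $[0,\infty)$, which is the tempered growth needed to invoke the framework of Section \ref{sec:DIST}.
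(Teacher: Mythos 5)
Your argument is correct and is exactly the intended route: the paper offers no separate proof of Corollary \ref{cor:7.2}, treating it as an immediate consequence of Theorem \ref{thm:7.1}, and your write-up simply supplies the routine verifications (that the vanishing traces $\lambda^{(\ell)}(0)=0$, $\ell\le m+1$, make the trivial extension $\mathcal C^{m+1}(\mathbb R;X)$ with integrable $(m+2)$-th derivative, plus causality forcing $u(0)=0$ and injectivity of the Laplace transform giving uniqueness). No gaps; nothing further is needed.
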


The main theorem of this section follows. It will use the antidifferentiation-in-time operator
\[
g^{(-1)}(t):=\int_0^t g(\tau)\mathrm d\tau.
\]
Consider now data satisfying
\begin{alignat*}{6}
\mathbf f &\in W^{m(f),1}_+(0,\infty;\mathbf L^2(\Omega)),\\
\bs\alpha & \in W^{m(\alpha),1}_+(0,\infty;\mathbf H^{1/2}(\Gamma_D)),\\
\bs\beta & \in W^{m(\beta),1}_+(0,\infty;\mathbf H^{-1/2}(\Gamma_N)), 
\end{alignat*} 
for some non-negative $m(\alpha)$, $m(\beta)$, and $m(f)$. We denote with the same symbols their tilde-extensions, i.e., their extensions by zero to negative times. We finally associate the solution $(\mathbf u,\bs\sigma)$ of the viscoelastic wave propagation problem. We want to give hypotheses on the data guaranteeing the existence of solutions of the equation that are continuous functions of time. Key quantities to keep in mind are the parameter $r$ and  the functions $\phi$ and $\psi$ in \eqref{eq:C.2}-\eqref{eq:phiRe} (hypotheses on the material model) and the derived functions $\phi_\star$ and $\psi_\star$ in Proposition \ref{prop:5.1}. The key theorem to keep in mind is Theorem \ref{th:5.3} and the bound for $\|\mathrm C(s)\|$. 

\begin{theorem}\label{th:7.3}
If
\[
m(f)=2,\quad m(\alpha)=3+r, \quad m(\beta)=3, 
\]
then
$
\mathbf u\in 
	\mathcal C^1([0,\infty);\mathbf L^2(\Omega))
	\cap
	\mathcal C([0,\infty);\mathbf H^1(\Omega)),
$
and we have the estimates
\begin{subequations}
\begin{alignat}{6}
\nonumber
\| \dot{\mathbf u}(t)\|_\Omega
	+\|\eps{\mathbf u(t)}\|_\Omega 
		\le & C_f(t) \sum_{k=0}^2 \int_0^t \|\mathbf f^{(k)}(\tau)\|_\Omega\mathrm d\tau
			+ C_\alpha(t) \sum_{k=1+r}^{3+r} 
				\int_0^t \|\bs\alpha^{(k)}(\tau)\|_{1/2,\Gamma_D}\mathrm d\tau
				\\
\label{eq:7.5a}
			& +C_\beta(t) \sum_{k=1}^3 
				\int_0^t \|\bs\beta^{(k)}(\tau)\|_{-1/2,\Gamma_N}\mathrm d\tau, \\
\nonumber
\|\mathbf u(t)\|_\Omega 
		\le & C_f(t) \sum_{k=-1}^1 \int_0^t \|\mathbf f^{(k)}(\tau)\|_\Omega\mathrm d\tau
			+ C_\alpha(t) \sum_{k=r}^{2+r} 
				\int_0^t \|\bs\alpha^{(k)}(\tau)\|_{1/2,\Gamma_D}\mathrm d\tau
				\\
\label{eq:7.5b}
			& +C_\beta(t) \sum_{k=0}^2 
				\int_0^t \|\bs\beta^{(k)}(\tau)\|_{-1/2,\Gamma_N}\mathrm d\tau.
\end{alignat}
with
\begin{alignat*}{6}
C_f(t)& :=\frac{C_1}{\psi_\star(t^{-1})}, \qquad
C_\alpha(t)& := \frac{C_3 \max\{1,t^{1/2}\} \phi_\star(t^{-1})}{\psi_\star(t^{-1})}
				\left(\frac{t}{1+t}\right)^{1/2},\\
C_\beta(t) &:= \frac{C_2\max\{1,t\}}{\psi_\star(t^{-1})},
\end{alignat*}
for some constants $C_1, C_2, C_3$.
If additionally
\[
m(f)=2+r, \qquad m(\alpha)=3+2r, \qquad m(\beta)=3+r, 
\]
then
$
\bs\sigma \in\mathcal C([0,\infty);\Ltwo)
$
and we have the bound
\begin{alignat}{6}
\nonumber
\|\bs\sigma(t)\|_\Omega  
	\le &  
		C_f(t) \sum_{k=r}^{2+r} \int_0^t \|\mathbf f^{(k)}(\tau)\|_\Omega\mathrm d\tau
		+ C_\alpha(t) \sum_{k=1+2r}^{3+2r} 
				\int_0^t \|\bs\alpha^{(k)}(\tau)\|_{1/2,\Gamma_D}\mathrm d\tau
			 \\
\label{eq:7.5c}
	& +C_\beta(t) \sum_{k=1+r}^{3+r} 
				\int_0^t \|\bs\beta^{(k)}(\tau)\|_{-1/2,\Gamma_N}\mathrm d\tau.
\end{alignat}		
\end{subequations}		
\end{theorem}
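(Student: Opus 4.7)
My plan is to apply the inversion theorem (Theorem~\ref{thm:7.1}, via Corollary~\ref{cor:7.2}) to the transfer function of Theorem~\ref{th:5.3}, handling each of the three data $\mathbf f$, $\bs\alpha$, $\bs\beta$ separately by linearity of $\mathrm T(s)$ (Proposition~\ref{prop:6.3}) and summing the resulting bounds. For \eqref{eq:7.5a}, observe that $\triple{\mathbf u_\star}{|s|}$ simultaneously controls $|s|\,\|\rho^{1/2}\mathbf u_\star\|_\Omega$ and $\|\eps{\mathbf u_\star}\|_\Omega$, so by causality (and $\mathbf u_\star(0)=\mathbf 0$) the Laplace-domain norms of $\dot{\mathbf u}_\star$ in $\mathbf L^2(\Omega)$ and of $\eps{\mathbf u_\star}$ in $\mathbb L^2(\Omega)$ are both bounded by the right-hand side of Theorem~\ref{th:5.3}. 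This yields three transfer functions of the form \eqref{eq:7.0} with parameters $(m,\mu,\varphi)$ equal to $(0,0,C/\psi_\star)$ for $\mathbf f$, to $(1+r,1/2,C\phi_\star/(\psi_\star\min\{1,\,\cdot\,\}^{1/2}))$ for $\bs\alpha$, and to $(1,0,C/(\psi_\star\min\{1,\,\cdot\,\}))$ for $\bs\beta$. The rational-at-the-origin hypothesis on $\varphi$ required in Theorem~\ref{thm:7.1}(b) follows at once from \eqref{eq:5.4}. Corollary~\ref{cor:7.2} applied to each datum (whose trivial extension belongs to $W^{m+2,1}_+$ exactly because $m(f)=2$, $m(\alpha)=3+r$, $m(\beta)=3$ were chosen to match $m+2$) returns a continuous function and the estimate \eqref{eq:7.1}. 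Rewriting $\varphi(t^{-1})$ using the identity $\min\{1,t^{-1}\}^{-\sigma}=\max\{1,t^\sigma\}$ recovers the displayed $C_f(t)$, $C_\beta(t)$, and $C_\alpha(t)$, whose factor $(t/(1+t))^{1/2}$ is exactly the $(t/(1+t))^\mu$ of \eqref{eq:7.1} with $\mu=1/2$. Continuity of $\dot{\mathbf u}$ into $\mathbf L^2(\Omega)$ and of $\eps{\mathbf u}$ into $\mathbb L^2(\Omega)$ (hence of $\mathbf u$ into $\mathbf H^1(\Omega)$ via Korn) is supplied by the continuity statement in Theorem~\ref{thm:7.1}.

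For \eqref{eq:7.5b} the natural Laplace-domain bound $\|\mathbf u_\star(s)\|_\Omega\le|s|^{-1}\triple{\mathbf u_\star}{|s|}/\rho_0^{1/2}$ gives a transfer function $\mathrm F(s)$ with $m+\mu<0$, outside the scope of Theorem~\ref{thm:7.1}. I sidestep this by writing $\mathcal L\{\mathbf u_\star\}(s)=(s\,\mathrm F(s))\,\mathcal L\{(\,\cdot\,)^{(-1)}\}(s)$, where $s\,\mathrm F(s)$ has the same growth as the transfer functions used for \eqref{eq:7.5a}, and applying Corollary~\ref{cor:7.2} to the antiderivative $\mathbf f^{(-1)}$, $\bs\alpha^{(-1)}$, $\bs\beta^{(-1)}$ of each datum. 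Since $(\lambda^{(-1)})^{(\ell)}=\lambda^{(\ell-1)}$, every derivative index shifts down by one, producing the summation ranges $k\in\{-1,0,1\}$, $\{r,\ldots,2+r\}$, $\{0,1,2\}$ with unchanged prefactors $C_f, C_\alpha, C_\beta$; the hypothesis $\lambda^{(-1)}\in W^{m+2,1}_+$ is equivalent to the regularity already placed on the data themselves.

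For \eqref{eq:7.5c}, the law $\bs\sigma=\mathrm C(s)\eps{\mathbf u}$ combined with Hypothesis~\ref{hypo3} inflates each of the three transfer functions by a factor $|s|^r\phi(\Re s)\le|s|^r\phi_\star(\Re s)$. The integer part of $m+\mu$ therefore jumps by $r$ for each piece, the regularity thresholds rise to $m(f)=2+r$, $m(\alpha)=3+2r$, $m(\beta)=3+r$, and the summation ranges shift correspondingly. The extra $\phi_\star$ factor can be absorbed into the respective $\varphi$ (equivalently, into the constants $C_f, C_\alpha, C_\beta$) without affecting the argument.

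The main obstacle is pure bookkeeping rather than analysis: correctly splitting each Laplace-domain growth $|s|^{m+\mu}\varphi(\Re s)$ into integer part $m$, fractional remainder $\mu\in\{0,1/2\}$, and non-increasing factor $\varphi$ satisfying the mild growth hypothesis of Theorem~\ref{thm:7.1}(b), and then massaging $\varphi(t^{-1})$ into the compact forms $C_f, C_\alpha, C_\beta$ displayed in the statement. Beyond Theorem~\ref{th:5.3}, Corollary~\ref{cor:7.2}, and the bound on $\|\mathrm C(s)\|$ from Hypothesis~\ref{hypo3}, no new analytic input is required.
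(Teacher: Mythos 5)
Your proposal is correct and follows essentially the same route as the paper's proof: decompose by linearity into the three data, read off the transfer-function parameters $(m,\mu,\varphi)$ from Theorem~\ref{th:5.3} (your triples $(0,0,C/\psi_\star)$, $(1+r,\tfrac12,C\phi_\star/(\psi_\star\min\{1,\sqrt{\cdot}\}))$, $(1,0,C/(\psi_\star\min\{1,\cdot\}))$ match the paper's table exactly), apply Corollary~\ref{cor:7.2}, obtain \eqref{eq:7.5b} by the same antidifferentiation shift, and obtain \eqref{eq:7.5c} by composing with $\mathrm C(s)$ and using Hypothesis~\ref{hypo3}. The only quibble is your remark that the extra $\phi$ factor in the $\bs\sigma$ bound "can be absorbed into the constants" --- $\phi(t^{-1})$ is $t$-dependent, so this is an imprecision, but it is one the paper's own statement of \eqref{eq:7.5c} shares.
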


\begin{proof}
The result is a more or less direct consequence of Corollary \ref{cor:7.2}, using Theorem \ref{th:5.3} for the Laplace domain estimates, and going through the language of Propositions \ref{prop:6.2} and \ref{prop:6.3}. Since the problem is linear, it can be decomposed as the sum of problems with data $(\mathbf f,0,0)$, $(0,\bs\alpha,0)$, and $(0,0,\bs\beta)$. We follow the notation of Proposition \ref{prop:6.2} and define nine instances of spaces and operators to apply Corollary \ref{cor:7.2}: we list the spaces $X$ and $Y$ (before complexification), as well as the value of $m$ and $\mu$ and the function $\varphi$ in the estimate \eqref{eq:7.0}.
We separate the operator $\mathrm S(s)$ in Proposition \ref{prop:6.2} as a sum of three operators
\[
\mathrm S(s)=\mathrm S_f(s)+\mathrm S_\alpha(s)+\mathrm S_\beta(s).
\]
We will also use the embedding of $\mathbf H^1(\Omega)$ into $\mathbf L^2(\Omega)$. The following table lists all nine operators:
\[
\begin{array}{c|c|c|c|c|c|c|c}
\mathrm F(s) & X & Y & \varphi(x) & m & \mu & \lambda & u \\
\hline & & & & & & & \\[-2ex]
s\mathrm I_{H^1\to L^2} \mathrm S_f(s)  
	& \mathbf L^2(\Omega) & \mathbf L^2(\Omega) 
	& 1/\psi_\star(x) & 0 & 0 
	& \mathbf f & \dot{\mathbf u} \\[1.5ex]
\bs\varepsilon \circ \mathrm S_f(s) 
	& \mathbf L^2(\Omega) & \Ltwo 
	& 1/\psi_\star(x) & 0 & 0 
	& \mathbf f & \eps{\mathbf u} \\[1.5ex]
\mathrm C(s)(\bs\varepsilon \circ \mathrm S_f(s)) 
	& \mathbf L^2(\Omega) & \Ltwo 
	& \phi(x)/\psi_\star(x) & r & 0 
	& \mathbf f &\bs\sigma \\[1.5ex]
s\mathrm I_{H^1\to L^2} \mathrm S_\alpha(s)  
	& \mathbf H^{1/2}(\Gamma_D) & \mathbf L^2(\Omega) 
	& \frac{\phi_\star(x)}{\psi_\star(x)\min\{1,\sqrt x\}} & 1+r & \frac12 
	& \bs\alpha & \dot{\mathbf u} \\[1.5ex]
\bs\varepsilon \circ \mathrm S_\alpha(s) 
	& \mathbf H^{1/2}(\Gamma_D) & \Ltwo 
	& \frac{\phi_\star(x)}{\psi_\star(x)\min\{1,\sqrt x\}} & 1+r & \frac12 
	& \bs\alpha & \eps{\mathbf u} \\[1.5ex]
\mathrm C(s)(\bs\varepsilon \circ \mathrm S_\alpha(s)) 
	& \mathbf H^{1/2}(\Gamma_D) & \Ltwo 
	& \frac{\phi(x)\phi_\star(x)}{\psi_\star(x)\min\{1,\sqrt x\}} & 1+2r & \frac12 
	& \bs\alpha &\bs\sigma \\[1.5ex]
s\mathrm I_{H^1\to L^2} \mathrm S_\beta(s)  
	& \mathbf H^{-1/2}(\Gamma_N) & \mathbf L^2(\Omega) 
	& \frac1{\min\{1,x\}\psi_\star(x)} & 1 & 0 
	& \bs\beta & \dot{\mathbf u} \\[1.5ex]
\bs\varepsilon \circ \mathrm S_\beta(s) 
	& \mathbf H^{-1/2}(\Gamma_N) & \Ltwo 
	& \frac1{\min\{1,x\}\psi_\star(x)} & 1 & 0 
	& \bs\beta & \eps{\mathbf u} \\[1.5ex]
\mathrm C(s)(\bs\varepsilon \circ \mathrm S_\beta(s)) 
	& \mathbf H^{-1/2}(\Gamma_N) & \Ltwo 
	& \frac{\phi(x)}{\min\{1,x\}\psi_\star(x)} & 1+r & 0 
	& \bs\beta &\bs\sigma 
\end{array}
\]
This proves the continuity properties for $\mathbf u$ and $\bs\sigma$ as well as the estimates \eqref{eq:7.5a} and \eqref{eq:7.5c}. Note that zero initial values hold whenever the output function is continuous (see Corollary \ref{cor:7.2}). To obtain the bounds for $\|\mathbf u(t)\|_\Omega$ we can use a simple shifting argument, since if $(\mathbf f,\bs\alpha,\bs\beta)\mapsto \dot{\mathbf u}$ (i.e., we use the operators multiplied by $s$ and with values in $\mathbf L^2(\Omega)$), then $(\mathbf f^{(-1)},\bs\alpha^{(-1)},\bs\beta^{(-1)})\mapsto \mathbf u$.
\end{proof}

In general (see Proposition \ref{prop:5.1})
\[
\frac1{\psi_\star(t^{-1})}\lesssim t^{\max\{1,\ell\}},
	\qquad
\phi(t^{-1}) \lesssim t^k,
	\qquad
\phi_\star(t^{-1})\lesssim t^{\max\{r,k\}}
\]
so all bounds in Theorem \ref{th:7.3} are polynomial.
In Zener's model and its fractional version (see Proposition \ref{prop:Zener} and \ref{prop:4.3}) we have $r=0$ and
\begin{alignat*}{6}
\psi(x)& =c_0\,x, \qquad&  \psi_\star(x)&=\min\{1,c_0\} x,\\
\phi(x)&=C\max\{1,x^{-2}\},\qquad  & \phi_\star(x)& \le \max\{1,C\} \max\{1,x^{-2}\}.
\end{alignat*}

\section{Technical work towards a time domain analysis}\label{sec:8}

In Section \ref{sec:SEMI}, we give a different analysis, based on the theory of $C_0$-semigroups of operators, of the viscoelastic wave propagation for Zener's model, including situations where part of the domain is described with a purely elastic material. This requires a certain amount of preparatory work, which we will present in full detail. To avoid keeping track of constants, in this section and in the next we will use the symbol $\lesssim$ to absorb constants in inequalities that we do not want to display.

Let $\mathrm C_\diff\in L^\infty(\Omega;\mathbb R^{(d\times d)\times (d\times d)})$ be a  non-negative  Hookean model, which will play the role of the {\bf diffusive part of the viscoelastic model}. We will identify the tensor with the operator
\[
\mathrm C_\diff:\mathbb L^2(\Omega)\longrightarrow \mathbb L^2(\Omega),
\]
which is bounded, selfadjoint, and positive semidefinite.
We also consider a strictly positive function $a\in L^\infty(\Omega)$, and the associated multiplication operator
\[
\Ltwo\ni \mathrm E \longmapsto m_a \mathrm E:= a \mathrm E\in \Ltwo.
\]
Note that $m_a\Cdiff=\Cdiff m_a$. We next consider the following seminorm in $\Ltwo$
\[
|\mathrm S|_c^2:=(a\,\Cdiff \mathrm S,\mathrm S)_\Omega.
\]

\begin{proposition}\label{prop:8.1}
The following properties hold:
\begin{itemize}
\item[\rm (a)] $|\mathrm S|_c\lesssim \| \mathrm S\|_\Omega$  for all $\mathrm S\in \Ltwo$.
\item[\rm (b)] $\|\Cdiff\mathrm S\|_\Omega\lesssim |\mathrm S|_c$  for all $\mathrm S\in \Ltwo$.
\item[\rm (c)] $|\mathrm S|_c=0$ if and only if $\mathrm S\in \ker \Cdiff$.
\item[\rm (d)] If $b\in L^\infty(\Omega)$, then $|b\mathrm S|_c\lesssim |\mathrm S|_c$ for all $\mathrm S\in \Ltwo$.
\end{itemize}
\end{proposition}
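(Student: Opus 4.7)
The plan is to treat (a), (c), and (d) as short direct computations and concentrate the real work on (b), which is the only part requiring a nontrivial trick.

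For (a), since $\mathrm C_\diff$ is bounded selfadjoint on $\Ltwo$ and $a\in L^\infty(\Omega)$, I just estimate
\[
|\mathrm S|_c^2 = (\mathrm C_\diff\mathrm S, a\,\mathrm S)_\Omega \le \|\mathrm C_\diff\|\,\|a\|_{L^\infty(\Omega)}\,\|\mathrm S\|_\Omega^2,
\]
which is the claim.

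For (b), the key observation is that the symmetric bilinear form $(\mathrm S,\mathrm T)\mapsto (\mathrm C_\diff\mathrm S,\mathrm T)_\Omega$ is positive semidefinite on $\Ltwo$ (because $\mathrm C_\diff$ is a non-negative Hookean model), so it satisfies the Cauchy--Schwarz inequality
\[
|(\mathrm C_\diff\mathrm S,\mathrm T)_\Omega|^2 \le (\mathrm C_\diff\mathrm S,\mathrm S)_\Omega\,(\mathrm C_\diff\mathrm T,\mathrm T)_\Omega.
\]
Choosing $\mathrm T=\mathrm C_\diff\mathrm S$ (which lies in $\Ltwo$ because $\mathrm C_\diff$ produces symmetric matrices) I get
\[
\|\mathrm C_\diff\mathrm S\|_\Omega^4 \le (\mathrm C_\diff\mathrm S,\mathrm S)_\Omega\,(\mathrm C_\diff(\mathrm C_\diff\mathrm S),\mathrm C_\diff\mathrm S)_\Omega \le (\mathrm C_\diff\mathrm S,\mathrm S)_\Omega\,\|\mathrm C_\diff\|\,\|\mathrm C_\diff\mathrm S\|_\Omega^2,
\]
and after dividing by $\|\mathrm C_\diff\mathrm S\|_\Omega^2$ (trivially true if it vanishes) and using the lower bound $a\ge a_0>0$ in the form $(\mathrm C_\diff\mathrm S,\mathrm S)_\Omega \le a_0^{-1}|\mathrm S|_c^2$, the estimate $\|\mathrm C_\diff\mathrm S\|_\Omega^2 \lesssim |\mathrm S|_c^2$ follows. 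This is the one step I expect to need a moment of thought; everything else is bookkeeping.

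Part (c) is now automatic: (b) shows that $|\mathrm S|_c=0$ forces $\mathrm C_\diff\mathrm S=0$, and the converse direction is immediate from the definition of $|\cdot|_c$. For (d), because $\mathrm C_\diff$ acts pointwise as an $L^\infty$ tensor field on the matrix value of $\mathrm S$, multiplication by the scalar function $b$ commutes with it, i.e.\ $\mathrm C_\diff(b\,\mathrm S) = b\,\mathrm C_\diff\mathrm S$ almost everywhere. Then
\[
|b\,\mathrm S|_c^2 = \int_\Omega a\,b^2\,\mathrm C_\diff\mathrm S:\mathrm S \le \|b\|_{L^\infty(\Omega)}^2\int_\Omega a\,\mathrm C_\diff\mathrm S:\mathrm S = \|b\|_{L^\infty(\Omega)}^2\,|\mathrm S|_c^2,
\]
where the inequality uses that the integrand $a\,\mathrm C_\diff\mathrm S:\mathrm S$ is pointwise non-negative (as $a\ge 0$ and $\mathrm C_\diff$ is non-negative Hookean), so bounding the pointwise factor $b^2$ by its essential supremum is legitimate.
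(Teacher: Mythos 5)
Your proof is correct and follows essentially the same route as the paper's: parts (a), (c), and (d) are identical, and part (b) rests on the same key trick, namely the Cauchy--Schwarz inequality for a positive semidefinite bilinear form built from $\Cdiff$, tested against the image of $\mathrm S$. The only cosmetic difference is that the paper applies this to the weighted form $(\cdot,\cdot)_c$ (testing with $a\,\Cdiff\mathrm S$ and invoking part (a)), whereas you apply it to the unweighted form $(\Cdiff\,\cdot,\cdot)_\Omega$ (testing with $\Cdiff\mathrm S$) and bring in the weight at the end through $a\ge a_0>0$ --- the same lower bound on $a$ the paper uses via the boundedness of $m_{1/a}$.
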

 
\begin{proof}
Since $m_a$ and $\Cdiff$ are bounded linear operators in $\Ltwo$,
the proof of (a) is straightforward
\[
|\mathrm S|_c^2 
	=(a\Cdiff\mathrm S,\mathrm S)_\Omega 
	\le \| m_a\Cdiff\mathrm S\|_\Omega \|\mathrm S\|_\Omega
 	\lesssim \|\mathrm S\|_\Omega^2.
\]
To prove (b) note first that since $m_{1/a}$ is bounded, then
\begin{equation}\label{eq:8.1}
\|\Cdiff\mathrm S\|_\Omega \lesssim \| a\Cdiff\mathrm S\|_\Omega 
\end{equation}
and therefore, using (a)
\begin{alignat*}{6}
\| a\Cdiff\mathrm S\|_\Omega^2
	&=(a\Cdiff\mathrm S,a\Cdiff\mathrm S)_\Omega
	=(\mathrm S,a\Cdiff\mathrm S)_c \\
	&\le |\mathrm S|_c |a\Cdiff\mathrm S|_c 
	\lesssim |\mathrm S|_c \|a\Cdiff\mathrm S\|_\Omega.
\end{alignat*}
The result follows then by \eqref{eq:8.1}. The property (c) is a direct consequence of (b). Finally, to prove (d) we write
\begin{alignat*}{6}
|b\,\mathrm S|_c^2 
	&=(a\Cdiff(b\mathrm S),b \mathrm S)_\Omega 
	 =(b^2 a \Cdiff\mathrm S,\mathrm S)_\Omega \\
	&=\int_\Omega b^2 a \underbrace{(\Cdiff \mathrm S):\mathrm S}_{\ge 0} 
	 \lesssim \int_\Omega  a (\Cdiff \mathrm S):\mathrm S =|\mathrm S|_c^2.
\end{alignat*}
This finishes the proof.
\end{proof}  

The dynamics of the viscoelastic model will allow for subdomains where energy is conserved and subdomains where the diffusive part of the model is active. The fact that we allow for transition areas between purely elastic and strictly diffusive models forces us to take the {\bf completion} of the space $\Ltwo$ with respect to the seminorm $|\,\cdot\,|_c.$ This standard process requires first to eliminate the kernel of the seminorm and move to its orthogonal complement.
We thus consider the subspace
\begin{alignat*}{6}
M &:=\{\mathrm S\in \Ltwo\,:\, (\mathrm S,\mathrm T)_\Omega=0
		\qquad \forall \mathrm T\,,\, |\mathrm T|_c=0\} \\
	&=\{\mathrm T\in \Ltwo\,:\, |\mathrm T|_c=0\}^\perp 
	=(\ker\Cdiff)^\perp,
\end{alignat*}
which is clearly closed in $\Ltwo.$ If $\mathrm T\in M$ and $|\mathrm T|_c=0$, then $\mathrm T\in (\ker\Cdiff)^\perp\cap\ker\Cdiff=\{0\}$ (Proposition \ref{prop:8.1}(c)) and therefore $|\,\cdot\,|_c$ is a norm in $M$. Note also that if $\mathrm S\in \Ltwo$, we can decompose
\[
\mathrm S=\mathrm S_M+\mathrm S_C, 
	\qquad \mathrm S_M\in M,   \qquad \mathrm S_C\in \ker\Cdiff,
\]
and $\Cdiff \mathrm S=\Cdiff\mathrm S_M$. We define the Hilbert space $\widehat M$ as the completion of $M$ with respect to the norm $|\,\cdot\,|_c$. Consider next the canonical injection $\mathrm I:M \to \widehat M$ and the operator $\Pi:\Ltwo\to M$ that performs the $\Ltwo$ orthogonal projection onto $M$, where in $M$ we consider the norm $|\,\cdot\,|_c$. (For the sake of precision, it is important to understand that the target space for $\Pi$ is $M$ and therefore $\Pi$ is surjective.) We then define 
\[
\mathrm R:=\mathrm I\, \Pi:\Ltwo\to \widehat M,
\] 
which is continuous by Proposition \ref{prop:8.1}(a). Note next that $\Cdiff|_M:M \to \Ltwo$ is bounded by Proposition \ref{prop:8.1}(b) and, therefore, there exists a unique bounded extension
\[
\widehat\Cdiff:\widehat M \to \Ltwo.
\] 
An easy argument about extensions and the fact that $M=(\ker\Cdiff)^\bot$ prove that
\begin{equation}\label{eq:propCdiff}
\widehat\Cdiff \mathrm R =\widehat\Cdiff\,\mathrm I\, \Pi 
	=\Cdiff|_M \,\Pi =\Cdiff.
\end{equation}

We now consider $b\in L^\infty(\Omega)$ and the associated {\bf multiplication operator} $m_b:\Ltwo\to\Ltwo$ that we recall was given by $m_b\mathrm E:=b\,\mathrm E.$ Since $m_b\Cdiff=\Cdiff m_b$, it follows that $m_b$ maps $\ker\Cdiff$ into $\ker\Cdiff$. Therefore, if $\mathrm S\in M$, then
\[
(b\mathrm S,\mathrm T)_\Omega
	=(\mathrm S,b\mathrm T)_\Omega=0 \qquad \forall \mathrm T\in \ker\Cdiff,
\]
which shows that $b\mathrm S\in M$. The bounded linear map $m_b|_M:M\to M$ (see Proposition \ref{prop:8.1}(d) can then be extended to a bounded linear selfadjoint map
\[
\widehat{m_b}:\widehat M \to \widehat M.
\]
Note that since $a\in L^\infty(\Omega)$ is strictly positive, $\widehat{m_{1/a}}=\widehat{m_a}^{-1}$.

\begin{proposition}\label{prop:8.2}
The following properties hold:
\begin{itemize}
\item[\rm (a)] $\widehat{m_b}\mathrm R=\mathrm R m_b$.
\item[\rm (b)] $(\widehat{m_{1/a}}\mathrm S,\mathrm S)_c\ge 0$ for all $\mathrm S\in \widehat M$.
\item[\rm (c)] $(\mathrm S,\mathrm R\mathrm E)_c=
(\widehat\Cdiff\widehat{m_a}\mathrm S,\mathrm E)_\Omega$ for all $\mathrm S\in \widehat M$ and $\mathrm E\in \Ltwo$.
\item[\rm (d)] $m_a\widehat\Cdiff=\widehat\Cdiff\widehat{m_a}$.
\end{itemize}
\end{proposition}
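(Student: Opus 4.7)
The plan is to verify each identity first on the dense subspace $\mathrm I(M)\subset\widehat M$, working entirely inside $\Ltwo$, and then to extend by continuity of the operators $\widehat{m_b}$, $\widehat{m_a}$, $\widehat\Cdiff$ and of the form $(\cdot,\cdot)_c$. The recurring algebraic input is the commutation $m_b\Cdiff=\Cdiff m_b$ for every $b\in L^\infty(\Omega)$; as already noted in the excerpt, this forces $m_b$ to preserve $\ker\Cdiff$ and hence also $M=(\ker\Cdiff)^\perp$, so the $\Ltwo$-orthogonal decomposition $\Ltwo=M\oplus\ker\Cdiff$ is invariant under each $m_b$.

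For (a), from the invariance above and the fact that $\Pi$ is the $\Ltwo$-projection onto $M$, I would deduce $\Pi\, m_b=m_b\,\Pi$ on $\Ltwo$. Since the extension $\widehat{m_b}$ satisfies $\widehat{m_b}\,\mathrm I=\mathrm I\,m_b|_M$ by construction, composing with $\mathrm I$ yields $\widehat{m_b}\,\mathrm R = \mathrm I\,m_b\,\Pi = \mathrm I\,\Pi\,m_b = \mathrm R\,m_b$, with no density argument needed. For (b), on $M$ the commutation of $m_a$, $m_{1/a}$, and $\Cdiff$ gives
\[
(m_{1/a}\mathrm S,\mathrm S)_c=(a\,\Cdiff\,m_{1/a}\mathrm S,\mathrm S)_\Omega=(\Cdiff\mathrm S,\mathrm S)_\Omega\ge 0,
\]
and the bound survives the limit since $\widehat{m_{1/a}}$ is $|\cdot|_c$-bounded and $(\cdot,\cdot)_c$ is continuous on $\widehat M\times\widehat M$.

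Part (c) is the only one with a genuine obstacle. For $\mathrm S\in M$ and $\mathrm E\in\Ltwo$ I would compute
\[
(\mathrm I\mathrm S,\mathrm R\mathrm E)_c=(\mathrm S,\Pi\mathrm E)_c=(a\,\Cdiff\mathrm S,\Pi\mathrm E)_\Omega=(\Cdiff\, m_a\mathrm S,\Pi\mathrm E)_\Omega,
\]
and then drop the $\Pi$. The crucial fact is $\mathrm{range}(\Cdiff)\subset M$, which follows from selfadjointness of $\Cdiff$ via $\overline{\mathrm{range}(\Cdiff)}=(\ker\Cdiff)^\perp=M$. Thus $\Cdiff m_a\mathrm S\in M$ is $\Ltwo$-orthogonal to $\mathrm E-\Pi\mathrm E\in M^\perp=\ker\Cdiff$, so $(\Cdiff m_a\mathrm S,\Pi\mathrm E)_\Omega=(\Cdiff m_a\mathrm S,\mathrm E)_\Omega$. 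Rewriting this via $\widehat\Cdiff\,\mathrm I=\Cdiff|_M$ and $\widehat{m_a}\,\mathrm I=\mathrm I\,m_a|_M$ gives the asserted identity on $\mathrm I(M)$, and extension to $\widehat M$ uses that both sides are continuous in $\mathrm S$.

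Part (d) is a one-line check: on $\mathrm I(M)$,
\[
m_a\,\widehat\Cdiff\,\mathrm I\mathrm S=m_a\,\Cdiff\mathrm S=\Cdiff\, m_a\mathrm S=\widehat\Cdiff\,\mathrm I\,(m_a\mathrm S)=\widehat\Cdiff\,\widehat{m_a}\,\mathrm I\mathrm S,
\]
and density of $\mathrm I(M)$ together with continuity of $\widehat\Cdiff\circ\widehat{m_a}$ finishes it. The only non-bookkeeping ingredient in the whole proof is the range inclusion $\mathrm{range}(\Cdiff)\subset M$ used in (c); everything else is verifying that the extensions $\widehat{m_b}$, $\widehat\Cdiff$, and the map $\mathrm R=\mathrm I\,\Pi$ behave on $M$ as their unhatted counterparts in $\Ltwo$.
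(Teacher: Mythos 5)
Your proof is correct and follows essentially the same route as the paper's: verify each identity on $M$ using the commutation $m_b\Cdiff=\Cdiff m_b$ and the invariance of the decomposition $\Ltwo=M\oplus\ker\Cdiff$, then extend by density. You in fact supply a detail the paper leaves implicit, namely that dropping $\Pi$ in part (c) is justified because $\mathrm{range}(\Cdiff)\subset(\ker\Cdiff)^\perp=M$ by selfadjointness.
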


\begin{proof}
It is simple to see that $\Pi m_b=m_b|_M\Pi$ and therefore
\[
\widehat{m_b}\mathrm R=
\widehat{m_b}\mathrm I\,\Pi=\mathrm I\, m_b|_M\Pi=\mathrm R m_b,
\]
which proves (a). 
If $\mathrm S\in M$ and note that
\[
(\widehat{m_{1/a}}\mathrm S,\mathrm S)_c
=(a^{-1}\mathrm S,\mathrm S)_c=(\Cdiff\mathrm S,\mathrm S)_\Omega \ge 0,
\]
then (b) follows by density. Finally, if $\mathrm S\in M$, we have
\[
(\mathrm S,\mathrm R\mathrm E)_c
	=(\mathrm S,\Pi\mathrm E)_c
	=(\Cdiff m_a\mathrm S,\Pi\mathrm E)_\Omega
	=(\Cdiff m_a\mathrm S,\mathrm E)_\Omega
	\quad \forall \mathrm E\in \Ltwo,
\]
and (c) follows by density. Property (d) is straightforward by a density argument. 
\end{proof}

We end this section with a very simple example of the above construction, where the diffusive tensor $\Cdiff$ is strictly positive in a subdomain and vanishes in the complement.
Assume that there are two open sets $\Omega_1$ and $\Omega_2$ such that
\[
\Omega_1\cap\Omega_2=\emptyset, \qquad
\overline\Omega=\overline{\Omega_1}\cup\overline{\Omega_2}
\]  
and we have
\begin{subequations}\label{eq:8.2}
\begin{alignat}{6}
		\label{eq:8.2a}
	\Cdiff\mathrm M \,:\, \mathrm M \ge c_0 \chi_{\Omega_1} \mathrm M:\mathrm M
	& \qquad & \mbox{a.e.} \quad \forall \mathrm M\in \matR_\symm, \\
	\Cdiff\mathrm M =0 && \mbox{a.e. in $\Omega_2$}.
		\label{eq:8.2b}
\end{alignat}
\end{subequations}
We can always write
\[
\mathrm T=\chi_{\Omega_1} \mathrm T+\chi_{\Omega_2} \mathrm T
\]
and note that by \eqref{eq:8.2b}
\begin{equation}\label{eq:8.3}
\Cdiff \mathrm T
	=\chi_{\Omega_1}\Cdiff\mathrm T 
	=\Cdiff(\chi_{\Omega_1}\mathrm T).
\end{equation}
If $\Cdiff\mathrm T=0$, then $\chi_{\Omega_1}\mathrm T:\mathrm T=0$ by \eqref{eq:8.2a} and therefore $\mathrm T=0$ almost everywhere in $\Omega_1$. Reciprocally, if $\mathrm T=0$ in $\Omega_1$, then by \eqref{eq:8.3}, $\Cdiff\mathrm T=0$. We have thus proved that
\begin{equation}\label{eq:8.4}
\ker\Cdiff=\{ \mathrm T\in \Ltwo\,:\, \mathrm T=0 \mbox{ in $\Omega_1$}\}
\end{equation}
and therefore
\[
M=\{\mathrm T\in \Ltwo\,:\, \mathrm T=0\mbox{ in $\Omega_2$}\}
	\equiv \mathbb L^2(\Omega_1).
\]
However, now, due to \eqref{eq:8.2b}, we have
\[
|\mathrm T|_c \approx \| \mathrm T\|_{\Omega_1} \qquad 
	\forall \mathrm T\in M,
\]
and therefore $\widehat M=M$. In this case $\mathrm R:\Ltwo\to \widehat M$ is just the restriction to $\Omega_1$ of matrix-valued functions defined on $\Omega$, $\widehat\Cdiff$ is the restriction of the action of $\Cdiff$ to functions defined on $L^2(\Omega_1;\matR_\symm)$, and the same happens to multiplication operators $\widehat{m_b}$. This simple situation arises when the domain is subdivided into two parts: in one part we will deal with a purely elastic medium ($\Cdiff=0$), while in the other part we will handle a viscoelastic medium that is `strictly' diffusive, as expressed in \eqref{eq:8.2a}.

\section{Semigroup analysis of a general Zener model}
\label{sec:SEMI}

In the coming two sections we will use some basic results on $C_0$-semigroups in Hilbert spaces, namely the Lumer-Philips theorem (characterizing the generators of contractive semigroups in Hilbert spaces as maximal dissipative operators) and existence theorems for strong solutions of equations of the form
\[
\dot U(t)=\mathcal A U(t)+F(t),
\]
where $\mathcal A:D(\mathcal A)\subset \mathcal H\to\mathcal H$ is maximal dissipative and $F:[0,\infty)\to \mathcal H$ is a sufficiently smooth function. These results are well known and the reader is referred to any classical book dealing with semigroups (for instance, Pazy's popular monograph \cite{Pazy1983}) or to the chapters on semigroups in many textbooks on functional analysis.

Consider now the space
\[
\mathcal H:=\mathbf L^2(\Omega)\times \Ltwo \times \widehat M,
\]
endowed with the norm
\[
\|(\mathbf u,\mathrm E,\mathrm S)\|_{\mathcal H}^2
	:=(\rho\,\mathbf u,\mathbf u)_\Omega	
		+(\mathrm C_0\mathrm E,\mathrm E)_\Omega 
		+|\mathrm S|_c^2.
\]
We then define the operator
\[
\mathcal A(\mathbf u,\mathrm E,\mathrm S)
	:=(\rho^{-1}\mathrm{div}\,(\mathrm C_0\mathrm E+\widehat\Cdiff\mathrm S),
		\eps{\mathbf u}, 
		\widehat{m_{1/a}} (\mathrm R\eps{\mathbf u}-\mathrm S)),
\]
with domain
\[
D(\mathcal A) :=
	 \mathbf H^1_D(\Omega) \times 
	 \left\{ (\mathrm E,\mathrm S)\in \Ltwo\times \widehat M\,:\,
		\begin{array}{l}
			 \mathrm{div}(\mathrm C_0\mathrm E+\widehat\Cdiff\mathrm S)
					\in \mathbf L^2(\Omega)\\
		 \gamma_N (\mathrm C_0\mathrm E+\widehat\Cdiff\mathrm S)=0
		 \end{array}\right\}.
\]

\begin{proposition}\label{prop:9.1}
The following properties hold:
\begin{itemize}
\item[\rm (a)]  $(\mathcal AU,U)_{\mathcal H}\le 0$ for all $U\in D(\mathcal A)$.
\item[\rm (b)] The operator $D(\mathcal A)\ni U \mapsto U-\mathcal AU\in \mathcal H$ is surjective.
\end{itemize}
Therefore $\mathcal A$ is maximal dissipative and generates a strongly continuous contractive semigroup in $\mathcal H$. 
\end{proposition}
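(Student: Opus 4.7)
The plan is to verify the two Lumer--Phillips hypotheses, dissipativity (a) and the range condition (b), on the operator $\mathcal A$; since $\mathcal H$ is a Hilbert space (hence reflexive), density of $D(\mathcal A)$ follows automatically once the range condition is established, and the contractive semigroup conclusion is then standard.

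For (a), I would expand $(\mathcal A U,U)_{\mathcal H}$ for $U=(\uu,\mathrm E,\mathrm S)\in D(\mathcal A)$ into its three natural pieces. The momentum component produces $(\mathrm{div}(\CC_0\mathrm E+\widehat\CC_\diff\mathrm S),\uu)_\Omega$; since $\uu\in\HH^1_D(\Omega)$ and $\gamma_N(\CC_0\mathrm E+\widehat\CC_\diff\mathrm S)=\mathbf 0$ by the definition of $D(\mathcal A)$, Betti's formula converts it into $-(\CC_0\mathrm E,\eps\uu)_\Omega-(\widehat\CC_\diff\mathrm S,\eps\uu)_\Omega$. The strain component contributes $(\CC_0\eps\uu,\mathrm E)_\Omega$, which cancels the $\CC_0$ cross term by symmetry of $\CC_0$. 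The stress component contributes $(\widehat{m_{1/a}}\mathrm R\eps\uu,\mathrm S)_c-(\widehat{m_{1/a}}\mathrm S,\mathrm S)_c$; using selfadjointness of $\widehat{m_{1/a}}$ in $\widehat M$ followed by Proposition \ref{prop:8.2}(c) applied to $\widehat{m_{1/a}}\mathrm S$, together with $\widehat{m_a}\widehat{m_{1/a}}=\mathrm I$, the first piece equals $(\widehat\CC_\diff\mathrm S,\eps\uu)_\Omega$ and cancels the remaining diffusive cross term. What is left is $-(\widehat{m_{1/a}}\mathrm S,\mathrm S)_c$, which is $\le 0$ by Proposition \ref{prop:8.2}(b).

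For (b), given $F=(\mathbf f,\mathrm E_0,\mathrm S_0)\in\mathcal H$, the equation $(\mathrm I-\mathcal A)U=F$ reads
\[
\rho\uu-\mathrm{div}(\CC_0\mathrm E+\widehat\CC_\diff\mathrm S)=\rho\mathbf f,\quad
\mathrm E-\eps\uu=\mathrm E_0,\quad
(\mathrm I+\widehat{m_{1/a}})\mathrm S=\widehat{m_{1/a}}\mathrm R\eps\uu+\mathrm S_0,
\]
together with $\uu\in\HH^1_D(\Omega)$ and the homogeneous Neumann condition from $D(\mathcal A)$. The second identity gives $\mathrm E=\eps\uu+\mathrm E_0$, and applying $\widehat{m_a}$ to the third produces $\mathrm S=\widehat{m_{(1+a)^{-1}}}\mathrm R\eps\uu+\widehat{m_{a/(1+a)}}\mathrm S_0$. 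Combining \eqref{eq:propCdiff} with the extension of Proposition \ref{prop:8.2}(d) to $(1+a)^{-1}\in L^\infty(\Omega)$ then yields $\widehat\CC_\diff\mathrm S=\tfrac{1}{1+a}\CC_\diff\eps\uu+\widehat\CC_\diff\widehat{m_{a/(1+a)}}\mathrm S_0\in\Ltwo$. Substituting into the first equation and testing against $\ww\in\HH^1_D(\Omega)$ reduces everything to the symmetric variational problem
\[
(\rho\uu,\ww)_\Omega+\big(\big(\CC_0+\tfrac{1}{1+a}\CC_\diff\big)\eps\uu,\eps\ww\big)_\Omega=(\rho\mathbf f,\ww)_\Omega-\big(\CC_0\mathrm E_0+\widehat\CC_\diff\widehat{m_{a/(1+a)}}\mathrm S_0,\eps\ww\big)_\Omega
\]
for all $\ww\in\HH^1_D(\Omega)$. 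The bilinear form is bounded, symmetric, and coercive on $\HH^1_D(\Omega)$ by Korn's inequality (using $\rho\ge\rho_0>0$, $\CC_0\ge c_0>0$, and $\CC_\diff\ge 0$), so Lax--Milgram provides a unique $\uu$, and $\mathrm E,\mathrm S$ are then recovered by the formulas above. The variational identity shows that $\mathrm{div}(\CC_0\mathrm E+\widehat\CC_\diff\mathrm S)=\rho(\uu-\mathbf f)\in\mathbf L^2(\Omega)$, and a final application of Betti's formula for arbitrary trace data on $\Gamma_N$ gives $\gamma_N(\CC_0\mathrm E+\widehat\CC_\diff\mathrm S)=\mathbf 0$; hence $U\in D(\mathcal A)$.

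The main obstacle I anticipate is the algebraic bookkeeping in the range-condition step: because $\mathrm S$ lives in the abstract completion $\widehat M$ while the PDE is posed in $\Ltwo$, the intertwining identities of Propositions \ref{prop:8.1} and \ref{prop:8.2} must be marshalled carefully to rewrite $\widehat\CC_\diff\mathrm S$ as a concrete coefficient tensor acting on $\eps\uu$ and an $\Ltwo$-valued data term. Once this reduction is in place, dissipativity and the elliptic solvability are routine, and Lumer--Phillips closes the argument.
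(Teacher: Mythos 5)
Your proposal is correct and follows essentially the same route as the paper: the dissipativity computation reduces to $-(\widehat{m_{1/a}}\mathrm S,\mathrm S)_c\le 0$ via the homogeneous Neumann condition, the $\CC_0$ cancellation, and Proposition \ref{prop:8.2}(b)--(c), and the range condition is settled by the identical coercive variational problem for $\uu$ with $\mathrm E=\eps\uu+\mathrm E_0$ and $\mathrm S=\widehat{m_{1+a}}^{-1}(\mathrm R\eps\uu+\widehat{m_a}\mathrm S_0)$ recovered afterwards. The only addition beyond the paper's argument is your (correct) remark that density of $D(\mathcal A)$ is automatic for dissipative operators with full range $I-\mathcal A$ on a Hilbert space.
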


\begin{proof}
If $U=(\mathbf u,\mathrm E,\mathrm S)\in D(\mathcal A)$, then
\begin{alignat*}{6}
(\mathcal A\,U,U)_{\mathcal H}
	=& (\mathrm{div}\,(\mathrm C_0\mathrm E+\widehat\Cdiff\mathrm S),\mathbf u)_\Omega
		+(\mathrm C_0\eps{\mathbf u},\mathrm E)_\Omega
		+(\widehat{m_a}^{-1}(\mathrm R\eps{\mathbf u}-\mathrm S),\mathrm S)_c\\
	=&-(\widehat\Cdiff\mathrm S,\eps{\mathbf u})_\Omega
		+(\widehat{m_a}^{-1}\mathrm R\eps{\mathbf u},\mathrm S)_c
		-(\widehat{m_a}^{-1}\mathrm S,\mathrm S)_c
	=-(\widehat{m_{1/a}}\mathrm S,\mathrm S)_c\le 0,		
\end{alignat*}	
where we have progressively applied the boundary condition $\gamma_N (\mathrm C_0\mathrm E+\widehat\Cdiff\mathrm S)=0$, Proposition \ref{prop:8.2}(b), and Proposition \ref{prop:8.2}(c). This proves the dissipativity of $\mathcal A$.

Take now $(\mathbf f,\mathrm F,\mathrm G)\in \mathcal H$, solve the coercive variational problem
\begin{subequations}\label{eq:9.1}
\begin{alignat}{6}
& \mathbf u\in \mathbf H^1_D(\Omega),\\ 
& (\rho\,\mathbf u,\mathbf v)_\Omega
+((\mathrm C_0+\Cdiff m_{1+a}^{-1})\eps{\mathbf u},\eps{\mathbf v})_\Omega
=&& (\rho\,\mathbf f,\mathbf v)_\Omega -(\mathrm C_0\mathrm F,\eps{\mathbf v})_\Omega\\
	&&& -(\widehat\Cdiff\widehat{m_{1+a}}^{-1}\widehat{m_a}\mathrm G,
			\eps{\mathbf v})_\Omega \quad \forall\mathbf v\in \mathbf H^1_D(\Omega),
\nonumber
\end{alignat}
\end{subequations}
and define (see Proposition \ref{prop:8.2}(a))
\begin{alignat*}{6}
\mathrm E:= 
	&\eps{\mathbf u}+\mathrm F,\\
\mathrm S:= 
	& \widehat{m_{1+a}}^{-1}(\mathrm R\eps{\mathbf u}+\widehat{m_a}\mathrm G)
	=\mathrm R m_{1+a}^{-1} \eps{\mathbf u}
		+\widehat{m_{1+a}}^{-1}\widehat{m_a}\mathrm G,
\end{alignat*}
so that $\mathrm S+\widehat{m_a}\mathrm S=\mathrm R\eps{\mathbf u}+\widehat{m_a}\mathrm G$ and therefore
\[
\mathrm S=\widehat{m_a}^{-1}(\mathrm R\eps{\mathbf u}-\mathrm S)+\mathrm G.
\]
At the same time, by \eqref{eq:propCdiff} and Proposition \ref{prop:8.2}(a), we have
\[
\Cdiff m_{1+a}^{-1}\eps{\mathbf u}
	+\widehat\Cdiff \widehat{m_{1+a}}^{-1}\widehat{m_a}\mathrm G
	=\widehat\Cdiff (\mathrm R m_{1+a}^{-1}\eps{\mathbf u}
		+\widehat{m_{1+a}}^{-1}\widehat{m_a}\mathrm G)
	=\widehat\Cdiff\mathrm S,
\]
so that \eqref{eq:9.1} implies
\[
 (\rho\,\mathbf u,\mathbf v)_\Omega
+(\mathrm C_0\mathrm E+\widehat\Cdiff\mathrm S,\eps{\mathbf v})_\Omega
= (\rho\,\mathbf f,\mathbf v)_\Omega \qquad\forall\mathbf v\in \mathbf H^1_D(\Omega),
\]
which is equivalent to
\begin{alignat*}{6}
\rho\,\mathbf u
	-\mathrm{div}(\mathrm C_0\mathrm E+\widehat\Cdiff\mathrm S) &= \rho\,\mathbf  f,\\
\gamma_N(\mathrm C_0\mathrm E+\widehat\Cdiff\mathrm S) &=0.
\end{alignat*}
Summing up, we have $(\mathbf u,\mathrm E,\mathrm S)\in D(\mathcal A)$ and $(\mathbf u,\mathrm E,\mathrm S)=\mathcal A(\mathbf u,\mathrm E,\mathrm S)+(\mathbf f,\mathrm F,\mathrm G)$.
\end{proof}

\begin{theorem}\label{th:9.2}
For $\bs\alpha\in W^{2,1}_+(0,\infty;\mathbf H^{1/2}(\Gamma_D))$, $\bs\beta\in W^{1,1}_+(0,\infty;\mathbf H^{-1/2}(\Gamma_N))$, and $\mathbf f\in L^1(0,\infty;\mathbf L^2(\Omega))$, there exists a unique
\[
(\mathbf u,\mathrm E,\mathrm S)\in\mathcal C^1([0,\infty); \mathcal H),
\]
such that
\begin{subequations}\label{eq:9.2}
\begin{alignat}{6}
\label{eq:9.2a}
\rho \dot{\mathbf u}(t)
	&=\mathrm{div}(\mathrm C_0\mathrm E(t)+\widehat\Cdiff\mathrm S(t))
		+\mathbf f^{(-1)}(t) & \qquad & t\ge 0,\\
\label{eq:9.2b}
\dot{\mathrm E}(t)
	&=\eps{\mathbf u(t)} &&t\ge0,\\
\label{eq:9.2c}
\mathrm S(t)+\widehat{m_a}\dot{\mathrm S}(t)
	&=\mathrm R\eps{\mathbf u(t)} && t\ge 0,\\
\label{eq:9.2d}
\gamma_D \mathbf u(t)
	&=\bs\alpha(t) &&t\ge 0,\\
\label{eq:9.2e}
\gamma_N(\mathrm C_0\mathrm E(t)+\widehat\Cdiff\mathrm S(t))
	&=\bs\beta^{(-1)}(t) &&t\ge 0,
\end{alignat}
and
\begin{equation}
\mathbf u(0)=0, \qquad \mathrm E(0)=0, \qquad \mathrm S(0)=0.
\end{equation}
\end{subequations}
\end{theorem}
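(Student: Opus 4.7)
The plan is to recast \eqref{eq:9.2} as an abstract inhomogeneous Cauchy problem $\dot V=\mathcal A V+F$ on $\mathcal H$ with $V(0)=0$, and then invoke the standard theory of $C_0$-semigroups together with Proposition \ref{prop:9.1}. The obstacle is that $\bs\alpha$ and $\bs\beta^{(-1)}$ are non-zero whereas the homogeneous boundary conditions are encoded inside $D(\mathcal A)$, so I begin by introducing time-dependent lifts. Pick bounded right-inverses $L:\HH^{1/2}(\Gamma_D)\to \HH^1(\Omega)$ of $\gamma_D$ and $N:\HH^{-1/2}(\Gamma_N)\to \mathbb H(\mathrm{div},\Omega)$ of $\gamma_N$, and define
\[
\uu_\alpha(t):=L\bs\alpha(t),\qquad \bs\Sigma_\beta(t):=N\bs\beta^{(-1)}(t),\qquad \mathrm Z(t):=\mathrm C_0^{-1}\bs\Sigma_\beta(t),
\]
where $\mathrm C_0^{-1}$ is the pointwise inverse of the uniformly coercive Hookean model $\mathrm C_0$. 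The regularity hypotheses yield $\uu_\alpha\in W^{2,1}_+(0,\infty;\HH^1(\Omega))$ and $\mathrm Z\in W^{2,1}_+(0,\infty;\Ltwo)$, with $\mathrm C_0\mathrm Z=\bs\Sigma_\beta\in W^{2,1}_+(0,\infty;\mathbb H(\mathrm{div},\Omega))$ and $\gamma_N(\mathrm C_0\mathrm Z(t))=\bs\beta^{(-1)}(t)$; all three lifts vanish at $t=0$.

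Substituting $\uu=\tilde\uu+\uu_\alpha$ and $\mathrm E=\tilde{\mathrm E}+\mathrm Z$ into \eqref{eq:9.2}, a direct computation using \eqref{eq:propCdiff} and Proposition \ref{prop:8.2}(a) shows that $V:=(\tilde\uu,\tilde{\mathrm E},\mathrm S)$ must satisfy $V(0)=0$, the homogeneous boundary conditions that place $V(t)$ into $D(\mathcal A)$, and
\[
\dot V(t)=\mathcal A V(t)+F(t),
\]
with the forcing
\[
F(t):=\Bigl(\rho^{-1}\bigl(\mathbf f^{(-1)}(t)+\mathrm{div}(\mathrm C_0\mathrm Z(t))-\rho\,\dot\uu_\alpha(t)\bigr),\; \eps{\uu_\alpha(t)}-\dot{\mathrm Z}(t),\; \widehat{m_{1/a}}\mathrm R\,\eps{\uu_\alpha(t)}\Bigr).
\]
The data hypotheses together with the boundedness of $L$, $N$, $\mathrm C_0^{-1}$, the symmetric gradient, $\mathrm R$, and $\widehat{m_{1/a}}$ give $F\in W^{1,1}_+(0,\infty;\mathcal H)$.

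Since $V(0)=0\in D(\mathcal A)$ and $\mathcal A$ generates a strongly continuous contraction semigroup on $\mathcal H$ (Proposition \ref{prop:9.1}), standard semigroup theory (see e.g.\ \cite{Pazy1983}) produces a unique strong solution $V\in \mathcal C^1([0,\infty);\mathcal H)\cap \mathcal C([0,\infty);D(\mathcal A))$. Undoing the substitution returns $(\uu,\mathrm E,\mathrm S)\in \mathcal C^1([0,\infty);\mathcal H)$ satisfying \eqref{eq:9.2} with the prescribed boundary values, and uniqueness transfers back because the difference of two solutions to \eqref{eq:9.2} solves the abstract homogeneous equation with zero initial datum. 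The main technical hurdle is the Neumann lift: one must factor through $\mathbb H(\mathrm{div},\Omega)$ so that $\mathrm{div}(\mathrm C_0\mathrm Z)$ stays in $\mathbf L^2(\Omega)$ and feeds the momentum component of $F$, while the presence of $\rho\,\dot\uu_\alpha$ in that same equation forces $\uu_\alpha\in W^{2,1}_+$; since the Neumann data enters one antiderivative earlier, via $\bs\beta^{(-1)}$, this is exactly the source of the asymmetric regularity thresholds on $\bs\alpha$ and $\bs\beta$ in the statement of the theorem.
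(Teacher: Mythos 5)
Your argument is correct and rests on the same two pillars as the paper's proof: the maximal dissipativity of $\mathcal A$ from Proposition \ref{prop:9.1}, and the reduction of \eqref{eq:9.2} to an inhomogeneous Cauchy problem $\dot V=\mathcal A V+F$ with $F\in W^{1,1}_+(0,\infty;\mathcal H)$ and $V(0)=0\in D(\mathcal A)$, handled by standard semigroup theory. Where you genuinely differ is in the lifting. The paper lifts both boundary data at once by solving, for each $t$, a single elliptic problem $\rho\,\mathbf u_{\mathrm{nh}}(t)=\mathrm{div}(\mathrm C_0\eps{\mathbf u_{\mathrm{nh}}(t)})$ with $\gamma_D\mathbf u_{\mathrm{nh}}(t)=\bs\alpha(t)$ and $\gamma_N\mathrm C_0\eps{\mathbf u_{\mathrm{nh}}(t)}=\bs\beta^{(-1)}(t)$, then shifts $\mathrm E$ by $\eps{\mathbf u_{\mathrm{nh}}}$; the lifted stress is then automatically in $\mathbb H(\mathrm{div},\Omega)$ with divergence $\rho\,\mathbf u_{\mathrm{nh}}\in\mathbf L^2(\Omega)$, and the single estimate \eqref{eq:9.4} controls every term of $F$ and is recycled verbatim in Corollary \ref{cor:9.3}. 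You instead use two independent algebraic liftings, $L\bs\alpha$ and $N\bs\beta^{(-1)}$, converting the Neumann lift into a strain-like shift $\mathrm Z=\mathrm C_0^{-1}N\bs\beta^{(-1)}$ of $\mathrm E$; this is legitimate, since $\mathrm C_0$ is pointwise symmetric positive definite on symmetric matrices and hence boundedly invertible on $\Ltwo$, and $\gamma_N$, being bounded and surjective between Hilbert spaces, admits a bounded linear right inverse. Your bookkeeping of where each datum lands in $F$ --- $\mathrm{div}(N\bs\beta^{(-1)})$ and $\rho L\dot{\bs\alpha}$ in the momentum slot, $\eps{L\bs\alpha}-\dot{\mathrm Z}$ and $\widehat{m_{1/a}}\mathrm R\eps{L\bs\alpha}$ in the other two --- checks out against \eqref{eq:propCdiff} and Proposition \ref{prop:8.2}, and yields exactly the stated regularity thresholds; the only cost is cosmetic, namely that the stability constants one would read off your $F$ involve the norms of $L$, $N$, and $\mathrm C_0^{-1}$ rather than the single elliptic-lifting bound \eqref{eq:9.4}. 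Your explicit uniqueness step (the difference of two solutions lies in $D(\mathcal A)$ by \eqref{eq:9.2a} and \eqref{eq:9.2e} and solves the homogeneous semigroup equation with zero initial datum) is a welcome addition that the paper leaves implicit.
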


\begin{proof}
For each $t\ge 0$ we solve the elliptic problem 
\begin{subequations}\label{eq:9.3}
\begin{alignat}{6}
 \mathbf u_{\mathrm{nh}}(t)&\in \mathbf H^1(\Omega), \\
\rho\,\mathbf u_{\mathrm{nh}}(t)
	&=\mathrm{div}(\mathrm C_0\eps{\mathbf u_{\mathrm{nh}}(t)},\\ 
\gamma_D\mathbf u_{\mathrm{nh}}(t)&=\bs\alpha(t),\\
\gamma_N\mathrm C_0\eps{\mathbf u_{\mathrm{nh}}(t)}& =\bs\beta^{(-1)}(t),
\end{alignat}
\end{subequations}
and note that
\begin{equation}\label{eq:9.4}
\| \mathbf u_{\mathrm{nh}}^{(\ell)}(t)\|_\Omega
+\|\eps{\mathbf u_{\mathrm{nh}}^{(\ell)}(t)}\|_\Omega
\lesssim 
\|\bs\alpha^{(\ell)}(t)\|_{1/2,\Gamma_D}
+\|\bs\beta^{(\ell-1)}(t)\|_{-1/2,\Gamma_N}
	\qquad\ell=0,1,2.
\end{equation}
In a second step, we define the function $F:[0,\infty)\to \mathcal H$
\[
F(t):=(\mathbf u_{\mathrm{nh}}(t)-\dot{\mathbf u}_{\mathrm{nh}}(t)
		+\rho^{-1}\mathbf f^{(-1)}(t),\quad
		\eps{\mathbf u_{\mathrm{nh}}(t)-\dot{\mathbf u}_{\mathrm{nh}}(t)},\quad
		\widehat{m_a}^{-1}\mathrm R\eps{\mathbf u_{\mathrm{nh}}(t)})
\]
and note that $F\in W^{1,1}_+(0,\infty;\mathcal H)$ and that
\begin{equation}\label{eq:9.5}
\|F^{(\ell)}(t)\|_{\mathcal H}
	\lesssim \sum_{k=\ell}^{\ell+1}\left(
		\|\bs\alpha^{(k)}(t)\|_{1/2,\Gamma_D}
		+\|\bs\beta^{(k-1)}(t)\|_{-1/2,\Gamma_N}\right)
		+\|\mathbf f^{(\ell-1)}(t)\|_\Omega\qquad \ell=0,1.
\end{equation}
We use this function to solve the non-homogeneous initial value problem
\begin{equation}\label{eq:9.6}
\dot U_0(t)=\mathcal AU_0(t)+F(t) \qquad U_0(0)=0,
\end{equation}
which has a unique solution $U_0=(u_0,\mathrm E_0,\mathrm S_0)\in \mathcal C^1([0,\infty);\mathcal H)$, admitting the bounds
\begin{equation}\label{eq:9.7}
\|U_0(t)\|_{\mathcal H}\lesssim \int_0^t \|F(\tau)\|_{\mathcal H}\mathrm d\tau,
	\qquad
\|\dot U_0(t)\|_{\mathcal H}\lesssim \int_0^t \|\dot F(\tau)\|_{\mathcal H}\mathrm d\tau.
\end{equation}
In the next step, we define the triple $(\mathbf u,\mathrm E,\mathrm S)\in \mathcal C^1([0,\infty);\mathcal H)$ by
\begin{subequations}\label{eq:9.8}
\begin{alignat}{6}
\mathbf u(t) &:=\mathbf u_0(t)+\mathbf u_{\mathrm{nh}}(t),\\
\mathrm E(t) &:=\mathrm E_0(t)+\eps{\mathbf u_{\mathrm{nh}}(t)},\\
\mathrm S(t) &:=\mathrm S_0(t),
\end{alignat}
\end{subequations}
Taking into account the definition of $F$, equations \eqref{eq:9.3} and \eqref{eq:9.6} show that $(\mathbf u,\mathrm E,\mathrm S)$ satisfy \eqref{eq:9.2}. 
\end{proof}

\begin{corollary}\label{cor:9.3}
If $\bs\alpha$, $\bs\beta$, and $\mathbf f$ satisfy the conditions of Theorem \ref{th:9.2}, $(\mathbf u,\mathrm E,\mathrm S)$ is the solution of \eqref{eq:9.2}, and we define
\begin{equation}\label{eq:9.9}
\bs\sigma(t):=\mathrm C_0\dot{\mathrm E}(t)+\widehat\Cdiff\dot{\mathrm S}(t),
\end{equation}
the pair $(\mathbf u,\bs\sigma)$ satisfies the bounds for all $t\ge 0$
\begin{alignat*}{6}
\|\mathbf u(t)\|_\Omega 
	\lesssim &  
	\sum_{k=0}^{1}\int_0^t \left(
		\|\bs\alpha^{(k)}(\tau)\|_{1/2,\Gamma_D}
		+\|\bs\beta^{(k-1)}(\tau)\|_{-1/2,\Gamma_N}\right)\mathrm d\tau
		\\
		& +\int_0^t \|\mathbf f^{(-1)}(\tau)\|_\Omega\mathrm d\tau,\\
\|\eps{\mathbf u(t)}\|_\Omega + \|\bs\sigma(t)\|_\Omega
	\lesssim &  
		\sum_{k=1}^{2}\int_0^t \left(
		\|\bs\alpha^{(k)}(\tau)\|_{1/2,\Gamma_D}
		+\|\bs\beta^{(k-1)}(\tau)\|_{-1/2,\Gamma_N}\right)\mathrm d\tau
		\\
		& +\int_0^t\|\mathbf f(\tau)\|_\Omega\mathrm d\tau.
\end{alignat*}
As a consequence $\bs\sigma(0)=0$.
\end{corollary}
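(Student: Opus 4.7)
The plan is to piggyback on the semigroup estimates already written down in the proof of Theorem \ref{th:9.2} for $U_0=(\mathbf u_0,\mathrm E_0,\mathrm S_0)$, extract pointwise bounds for the three components of $U_0$ from the $\mathcal H$-norm, and reassemble them via the decompositions \eqref{eq:9.8} to recover bounds for $\mathbf u$, $\eps{\mathbf u}$, and $\bs\sigma$. The key structural observation is that differentiating \eqref{eq:9.2b} (or, equivalently, reading the second component of $\dot U_0=\mathcal A U_0+F$) gives the identity
\[
\eps{\mathbf u(t)}=\dot{\mathrm E}(t)=\dot{\mathrm E}_0(t)+\eps{\dot{\mathbf u}_{\mathrm{nh}}(t)},
\]
so that $\bs\sigma(t)=\mathrm C_0\eps{\mathbf u(t)}+\widehat\Cdiff\dot{\mathrm S}_0(t)$. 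Thus all three quantities can be controlled once we know $\|U_0(t)\|_{\mathcal H}$ and $\|\dot U_0(t)\|_{\mathcal H}$.

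First I would observe that from the definition of the $\mathcal H$-norm and the coercivity of $\mathrm C_0$ we get $\|\mathbf u_0(t)\|_\Omega\lesssim\|U_0(t)\|_{\mathcal H}$, $\|\mathrm E_0(t)\|_\Omega\lesssim\|U_0(t)\|_{\mathcal H}$, and the analogous bounds with dots, and $|\mathrm S_0(t)|_c\lesssim\|U_0(t)\|_{\mathcal H}$, $|\dot{\mathrm S}_0(t)|_c\lesssim\|\dot U_0(t)\|_{\mathcal H}$. Using Proposition \ref{prop:8.1}(b) (extended to $\widehat M$ by density), we then convert the seminorm bound for $\dot{\mathrm S}_0$ into $\|\widehat\Cdiff\dot{\mathrm S}_0(t)\|_\Omega\lesssim|\dot{\mathrm S}_0(t)|_c\lesssim\|\dot U_0(t)\|_{\mathcal H}$, which together with the identity displayed above yields
\[
\|\eps{\mathbf u(t)}\|_\Omega+\|\bs\sigma(t)\|_\Omega
\lesssim\|\dot U_0(t)\|_{\mathcal H}+\|\eps{\dot{\mathbf u}_{\mathrm{nh}}(t)}\|_\Omega,
\qquad
\|\mathbf u(t)\|_\Omega\lesssim\|U_0(t)\|_{\mathcal H}+\|\mathbf u_{\mathrm{nh}}(t)\|_\Omega.
\]

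Next I would feed the semigroup estimates \eqref{eq:9.7} together with the bounds \eqref{eq:9.5} for $\|F\|_{\mathcal H}$ and $\|\dot F\|_{\mathcal H}$ (for $\ell=0$ and $\ell=1$ respectively), and the elliptic lift bound \eqref{eq:9.4} for the remaining $\mathbf u_{\mathrm{nh}}$ terms. This already puts the right hand sides in the form of integrals of $\bs\alpha^{(k)}$, $\bs\beta^{(k-1)}$, and derivatives of $\mathbf f$ with the correct indices. The only remaining mismatch is that \eqref{eq:9.4} produces the pointwise quantities $\|\bs\alpha^{(\ell)}(t)\|_{1/2,\Gamma_D}$ and $\|\bs\beta^{(\ell-1)}(t)\|_{-1/2,\Gamma_N}$ (for $\ell=0,1$) rather than their time integrals. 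I would handle this by invoking the hypotheses $\bs\alpha\in W^{2,1}_+$, $\bs\beta\in W^{1,1}_+$, which imply $\bs\alpha(0)=\bs\alpha'(0)=0$ and $\bs\beta(0)=0$, so that these pointwise values equal the integrals of one more derivative; this precisely produces the remaining $k=1$ (or $k=2$) terms in the stated sums without introducing any new index.

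Finally, the conclusion $\bs\sigma(0)=0$ drops out of the bound for $\|\bs\sigma(t)\|_\Omega$ upon setting $t=0$, since every integral on the right hand side vanishes. The step I expect to be the mildly tricky one is the algebraic cancellation that reveals $\eps{\mathbf u(t)}=\dot{\mathrm E}_0(t)+\eps{\dot{\mathbf u}_{\mathrm{nh}}(t)}$ (so that $\eps{\mathbf u_0}$ itself, which is not directly controlled by $\|U_0\|_{\mathcal H}$, never needs to be estimated in isolation); everything else is careful bookkeeping of derivative indices and using the vanishing initial traces of the data to trade pointwise norms for time integrals.
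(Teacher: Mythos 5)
Your proposal is correct and follows exactly the route the paper intends: its proof of this corollary is a one-line instruction to combine the decomposition \eqref{eq:9.8} with the estimates \eqref{eq:9.4}, \eqref{eq:9.5}, and \eqref{eq:9.7}, which is precisely what you carry out. You also correctly supply the two details the paper leaves implicit --- using $\eps{\mathbf u}=\dot{\mathrm E}$ so that $\eps{\mathbf u_0}$ (not controlled by the $\mathcal H$-norm) never needs to be estimated, and trading the pointwise norms from \eqref{eq:9.4} for time integrals via the vanishing initial traces built into the $W^{m,1}_+$ hypotheses.
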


\begin{proof}
Use the decomposition \eqref{eq:9.8} and the estimates \eqref{eq:9.4}, \eqref{eq:9.5}, and \eqref{eq:9.7}.
\end{proof}

\begin{corollary}\label{cor:9.4}
If $\bs\alpha$, $\bs\beta$, and $\mathbf f$ satisfy the conditions of Theorem \ref{th:9.2}, $(\mathbf u,\mathrm E,\mathrm S)$ is the solution of \eqref{eq:9.2}, and we define
\[
\bs\sigma(t):=\mathrm C_0\dot{\mathrm E}(t)+\widehat\Cdiff\dot{\mathrm S}(t),
\]
the pair
\begin{equation}\label{eq:9.119}
(\mathbf u,\bs\sigma)\in 
	\left(\mathcal C^1([0,\infty);\mathbf L^2(\Omega))	
		\cap \mathcal C([0,\infty);\mathbf H^1(\Omega))\right)
		\times
		\mathcal C([0,\infty);\Ltwo)
\end{equation}
satisfies the equations
\begin{subequations}\label{eq:9.11}
\begin{alignat}{6}
\label{eq:9.11a}
\rho\,\ddot{\mathbf u}(t) &=\mathrm{div}\,\bs\sigma(t)+\mathbf f(t) & \quad & 
	\mbox{a.e.}-t,\\
\label{eq:9.11b}
\bs\sigma(t)+a\,\dot{\bs\sigma}(t)
	&=\mathrm C_0\eps{\mathbf u(t)}+(a\mathrm C_0+\Cdiff)\eps{\dot{\mathbf u}(t)}
		& & \mbox{a.e.}-t,\\
\label{eq:9.11c}
\gamma_D\mathbf u(t) &=\bs\alpha(t) && t\ge 0,\\
\label{eq:9.11d}
\gamma_N\bs\sigma(t) &=\bs\beta(t) && \mbox{a.e.}-t,
\end{alignat}
with initial conditions
\begin{equation}
\mathbf u(0)=0, \quad \dot{\mathbf u}(0)=0, \quad \bs\sigma(0)=0.
\end{equation}
\end{subequations}
\end{corollary}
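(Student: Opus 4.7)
The plan is to derive each assertion from the semigroup identities \eqref{eq:9.2} by combining algebraic manipulations based on \eqref{eq:propCdiff} and the commutations of Proposition \ref{prop:8.2} with time differentiation, interpreted in the distributional sense when needed. First I would establish the regularity \eqref{eq:9.119} and the initial conditions. The inclusion $\mathbf u\in \mathcal C^1([0,\infty);\mathbf L^2(\Omega))$ is inherited from $(\mathbf u,\mathrm E,\mathrm S)\in \mathcal C^1([0,\infty);\mathcal H)$. To upgrade to $\mathbf u\in \mathcal C([0,\infty);\mathbf H^1(\Omega))$, I would use the decomposition $\mathbf u=\mathbf u_0+\mathbf u_{\mathrm{nh}}$ of the proof of Theorem \ref{th:9.2}: the lift $\mathbf u_{\mathrm{nh}}$ is continuous in $\mathbf H^1(\Omega)$ by \eqref{eq:9.4}, while \eqref{eq:9.2b} gives $\eps{\mathbf u_0}=\dot{\mathrm E}_0-\eps{\mathbf u_{\mathrm{nh}}-\dot{\mathbf u}_{\mathrm{nh}}}\in \mathcal C([0,\infty);\Ltwo)$, so Korn's inequality on $\mathbf H^1_D(\Omega)$ delivers the required continuity. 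The continuity of $\bs\sigma=\mathrm C_0\dot{\mathrm E}+\widehat\Cdiff\dot{\mathrm S}$ in $\Ltwo$ is immediate from the $\mathcal C^1$ regularity of $(\mathrm E,\mathrm S)$ and the boundedness of $\mathrm C_0,\widehat\Cdiff$. For the initial conditions, $\mathbf u(0)=0$ is part of \eqref{eq:9.2}; evaluating \eqref{eq:9.2a}--\eqref{eq:9.2c} at $t=0$ using $\mathrm E(0)=\mathrm S(0)=0$ and $\mathbf f^{(-1)}(0)=0$ forces $\dot{\mathbf u}(0)=\dot{\mathrm E}(0)=\dot{\mathrm S}(0)=0$, hence $\bs\sigma(0)=0$.

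The algebraic core of the proof is the pointwise-in-$t$ identity
\[
\widehat\Cdiff\,\mathrm S(t)+a\,\bs\sigma(t)=(a\mathrm C_0+\Cdiff)\,\eps{\mathbf u(t)},\qquad t\ge 0,
\]
from which \eqref{eq:9.11b} will follow by differentiation. To obtain it, I would multiply \eqref{eq:9.2c} by $\widehat{m_a}$ to get $\mathrm S+\widehat{m_a}\dot{\mathrm S}=\mathrm R\eps{\mathbf u}$; apply $\widehat\Cdiff$ and use $\widehat\Cdiff\mathrm R=\Cdiff$ from \eqref{eq:propCdiff} together with $\widehat\Cdiff\widehat{m_a}=m_a\widehat\Cdiff$ from Proposition \ref{prop:8.2}(d) to obtain $\widehat\Cdiff\mathrm S+a\,\widehat\Cdiff\dot{\mathrm S}=\Cdiff\eps{\mathbf u}$; and substitute $\widehat\Cdiff\dot{\mathrm S}=\bs\sigma-\mathrm C_0\eps{\mathbf u}$, which follows from the definition of $\bs\sigma$ and \eqref{eq:9.2b}. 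Distributional time differentiation of the identity then produces $\widehat\Cdiff\dot{\mathrm S}+a\,\dot{\bs\sigma}=(a\mathrm C_0+\Cdiff)\eps{\dot{\mathbf u}}$, and replacing $\widehat\Cdiff\dot{\mathrm S}$ by $\bs\sigma-\mathrm C_0\eps{\mathbf u}$ once more yields \eqref{eq:9.11b}. For \eqref{eq:9.11a} and \eqref{eq:9.11d}, I would introduce $\bs\sigma_{\mathrm{int}}:=\mathrm C_0\mathrm E+\widehat\Cdiff\mathrm S\in \mathcal C^1([0,\infty);\Ltwo)$ with $\dot{\bs\sigma}_{\mathrm{int}}=\bs\sigma$, so that \eqref{eq:9.2a} and \eqref{eq:9.2e} read $\rho\dot{\mathbf u}=\mathrm{div}\,\bs\sigma_{\mathrm{int}}+\mathbf f^{(-1)}$ and $\gamma_N\bs\sigma_{\mathrm{int}}=\bs\beta^{(-1)}$ for all $t\ge 0$; distributional differentiation of both in $t$, combined with $\tfrac{d}{dt}\mathbf f^{(-1)}=\mathbf f$, gives \eqref{eq:9.11a} and \eqref{eq:9.11d} a.e.-$t$. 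The Dirichlet condition \eqref{eq:9.11c} is nothing but \eqref{eq:9.2d}.

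The main technical obstacle is the justification of the time differentiation when only $\mathcal C^1$ temporal regularity is available: the quantities $\ddot{\mathbf u}$, $\dot{\bs\sigma}$, and $\eps{\dot{\mathbf u}}$ need not exist classically as $\mathbf L^2$- or $\Ltwo$-valued functions at every $t$. Each such derivative must be interpreted as a distribution in $t$ with values in the appropriate spatial space (e.g.\ $\mathbf H^{-1}(\Omega)$ or $\Ltwo$); once the resulting identity is recognized to equal a continuous $\Ltwo$-valued function on one side (for instance, $\widehat\Cdiff\dot{\mathrm S}$ in the derivation of \eqref{eq:9.11b}, or $\mathrm{div}\,\bs\sigma=\rho\ddot{\mathbf u}-\mathbf f$ inherited from the fact that the latter is the distributional derivative of a continuous function), it can be reread as a pointwise equality for almost every $t$, which is the precise meaning of the ``a.e.-$t$'' qualifier in the statement.
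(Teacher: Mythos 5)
Your proposal is correct and follows essentially the same route as the paper: establish the regularity and initial conditions from the semigroup solution, write the time-integrated identities (your $\widehat\Cdiff\mathrm S+a\bs\sigma=(a\mathrm C_0+\Cdiff)\eps{\uu}$ is exactly the paper's \eqref{eq:9.13} after substituting $\widehat\Cdiff\mathrm S=\bs\sigma^{(-1)}-\CC_0\eps{\uu}^{(-1)}$, and your $\bs\sigma_{\mathrm{int}}$ equations are \eqref{eq:9.12}), and then differentiate these continuous-in-$t$ identities in the sense of vector-valued distributions to read the results a.e.\ in $t$. The only (harmless) slip is the phrase ``multiply \eqref{eq:9.2c} by $\widehat{m_a}$''---that equation already has the stated form, so no multiplication is needed.
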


\begin{proof}
The key issue for this proof is regularity. Assuming the data regularity of Theorem \ref{th:9.2}, we have \eqref{eq:9.119}, as continuity of $\mathbf u$ as a function $[0,\infty)\to \mathbf H^1(\Omega)$ follows from \eqref{eq:9.2b}. Note that \eqref{eq:9.11c} is \eqref{eq:9.2d}. By \eqref{eq:9.2a}, we have that $\mathrm{div}(\mathrm C_0\mathrm E+\widehat\Cdiff\mathrm S)\in \mathcal C([0,\infty);\mathbf L^2(\Omega))$ and $\dot{\mathbf u}(0)=0$, which was the missing initial condition (recall Corollary \ref{cor:9.3}). We also have
\begin{equation}\label{eq:9.12}
\rho\dot{\mathbf u}(t)=\mathrm{div} \,\bs\sigma^{(-1)}(t) + \mathbf f^{(-1)}(t),
	\qquad
\gamma_N\bs\sigma^{(-1)}(t)=\bs\beta^{(-1)}(t),
\end{equation}
which are integrated versions of \eqref{eq:9.11a} and \eqref{eq:9.11d}.
Note next that by Proposition \ref{prop:8.2}(d) and \eqref{eq:propCdiff}, we have
\[
\widehat\Cdiff\mathrm S(t)+a\widehat\Cdiff\dot{\mathrm S}(t)
	=\widehat\Cdiff(\mathrm S(t)+\widehat{m_a}\dot{\mathrm S}(t))
	=\widehat\Cdiff\mathrm R\eps{\mathbf u(t)}
	=\Cdiff\eps{\mathbf u(t)}.
\]  
We also have 
\[
\mathrm C_0{\mathrm E}(t)+a\mathrm C_0\dot{\mathrm E}(t)
	=\mathrm C_0\eps{\mathbf u}^{(-1)}(t)+a\mathrm C_0\eps{{\mathbf u}}(t),
\]
and therefore
\begin{equation}\label{eq:9.13}
\bs\sigma^{(-1)}(t)+a\bs\sigma(t)=
\mathrm C_0\eps{\mathbf u}^{(-1)}(t)+(a\mathrm C_0+\Cdiff)\eps{\mathbf u(t)}.
\end{equation}
Equations \eqref{eq:9.12} and \eqref{eq:9.13} identify continuous functions of $t$ taking values in $\mathbf L^2(\Omega)$, $\mathbf H^{-1/2}(\Gamma_N)$, and $\Ltwo$ respectively. We can then differentiate them in the sense of vector-valued distributions of $t$ to obtain \eqref{eq:9.11a}, \eqref{eq:9.11d}, and \eqref{eq:9.11b}. Note that to be entirely precise, the additional regularity we obtain is
\begin{alignat*}{6}
\rho\ddot{\mathbf u}-\mathrm{div}\,\bs\sigma & \in L^1(0,\infty;\mathbf L^2(\Omega)),\\
\gamma_N\bs\sigma &\in L^1(0,\infty;\mathbf H^{-1/2}(\Gamma_N)),\\
a\dot{\bs\sigma}-(a\mathrm C_0+\Cdiff)\eps{\dot{\mathbf u}}
	&\in L^1(0,\infty;\Ltwo).
\end{alignat*}
This finishes the proof.
\end{proof}

\begin{corollary}
Let $\bs\alpha\in W^{3,1}_+(0,\infty;\mathbf H^{1/2}(\Gamma_D))$, $\bs\beta\in W^{2,1}_+(0,\infty;\mathbf H^{-1/2}(\Gamma_N))$, and $\mathbf f\in W^{1,1}(0,\infty;\mathbf L^2(\Omega))$, let $(\mathbf u,\mathrm E,\mathrm S)$ solve \eqref{eq:9.2} and $\bs\sigma$ be defined by \eqref{eq:9.9}. We have
\begin{alignat*}{6}
\mathbf u &\in 
	\mathcal C^2([0,\infty);\mathbf L^2(\Omega))	
		\cap \mathcal C^1([0,\infty);\mathbf H^1(\Omega)),\\
\bs\sigma & \in 
	\mathcal C^1([0,\infty);\Ltwo)
		\cap \mathcal C([0,\infty); \mathbb H(\mathrm{div},\Omega))
\end{alignat*}
and equations \eqref{eq:9.11} hold for all $t$ with all derivatives defined in the strong way in the appropriate spaces.
\end{corollary}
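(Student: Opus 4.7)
The plan is to bootstrap the argument of Theorem~\ref{th:9.2} by one order, using the fact that the strengthened data hypotheses are exactly what is needed to rerun that proof with every time derivative shifted up by one.

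Under $\bs\alpha\in W^{3,1}_+$ and $\bs\beta\in W^{2,1}_+$, the elliptic lift $\mathbf u_{\mathrm{nh}}$ defined by \eqref{eq:9.3} inherits one extra derivative and so belongs to $W^{3,1}_+(0,\infty;\mathbf H^1(\Omega))\subset\mathcal C^2([0,\infty);\mathbf H^1(\Omega))$. Extending the bound \eqref{eq:9.5} to $\ell=2$, which uses exactly $\bs\alpha^{(3)},\bs\beta^{(2)},\dot{\mathbf f}\in L^1$, shows that the forcing $F$ constructed in the proof of Theorem~\ref{th:9.2} now lies in $W^{2,1}_+(0,\infty;\mathcal H)$ with $F(0)=0$. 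To promote the mild solution $U_0$ of \eqref{eq:9.6} from $\mathcal C^1$ to $\mathcal C^2$, I would rewrite its Duhamel representation as $U_0(t)=\int_0^t e^{u\mathcal A}F(t-u)\,\mathrm du$ and differentiate in $t$; the vanishing of $F(0)$ kills the boundary term and leaves $\dot U_0(t)=\int_0^t e^{(t-\tau)\mathcal A}\dot F(\tau)\,\mathrm d\tau$, which is the mild solution of $\dot V=\mathcal AV+\dot F$ with $V(0)=0$. Since $\dot F\in W^{1,1}_+(0,\infty;\mathcal H)$, the $\mathcal C^1$ part of Theorem~\ref{th:9.2} applied to this shifted problem yields $\dot U_0\in\mathcal C^1([0,\infty);\mathcal H)$, hence $U_0\in\mathcal C^2([0,\infty);\mathcal H)$. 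Combining with the decomposition \eqref{eq:9.8} and the $\mathcal C^2$ regularity of $\mathbf u_{\mathrm{nh}}$ yields $(\mathbf u,\mathrm E,\mathrm S)\in\mathcal C^2([0,\infty);\mathcal H)$.

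The remaining statements follow quickly. First, $\mathbf u\in\mathcal C^1([0,\infty);\mathbf H^1(\Omega))$ comes from the identity $\eps{\mathbf u}=\dot{\mathrm E}\in\mathcal C^1([0,\infty);\Ltwo)$ of \eqref{eq:9.2b} together with $\gamma_D\mathbf u=\bs\alpha\in\mathcal C^2$ and Korn's inequality. Next, \eqref{eq:9.9} directly gives $\bs\sigma\in\mathcal C^1([0,\infty);\Ltwo)$, and \eqref{eq:9.11a}, now valid pointwise in $t$, identifies $\mathrm{div}\,\bs\sigma=\rho\ddot{\mathbf u}-\mathbf f\in\mathcal C([0,\infty);\mathbf L^2(\Omega))$, upgrading $\bs\sigma$ to a continuous $\mathbb H(\mathrm{div},\Omega)$-valued function. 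The almost-everywhere identities \eqref{eq:9.11b} and \eqref{eq:9.11d} of Corollary~\ref{cor:9.4} therefore become pointwise identities of continuous functions of $t$ in their respective target spaces.

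The main subtlety is the clean identification of $V=\dot U_0$ as the mild solution driven by $\dot F$, which hinges precisely on $F(0)=0$: if $F(0)\notin D(\mathcal A)$ one would pick up an extra term $e^{t\mathcal A}F(0)$ that fails to be $\mathcal C^1$ in $t$. Here $F(0)=0$ follows from the $W^{k,1}_+$ conditions on $\bs\alpha,\bs\beta$ together with $\mathbf f^{(-1)}(0)=0$, so the bootstrap argument goes through without further spectral input on $\mathcal A$.
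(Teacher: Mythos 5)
Your proposal is correct, and it reaches the conclusion by a genuinely different (more hands-on) route than the paper. The paper's proof is a two-line uniqueness argument at the level of the full system: since the data $(\dot{\mathbf f},\dot{\bs\alpha},\dot{\bs\beta})$ still satisfy the hypotheses of Theorem \ref{th:9.2}, one solves \eqref{eq:9.2} with these differentiated data, integrates from $0$ to $t$, and observes that by uniqueness the result is the original solution, which is therefore $\mathcal C^2([0,\infty);\mathcal H)$; ``this is enough to prove everything else.'' You instead re-open the proof of Theorem \ref{th:9.2} and bootstrap the abstract Cauchy problem \eqref{eq:9.6}: you verify $F\in W^{2,1}_+(0,\infty;\mathcal H)$ with $F(0)=0$, differentiate the Duhamel formula, and identify $\dot U_0$ as the mild solution driven by $\dot F$. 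Both are standard shift arguments; the paper's is shorter because linearity and uniqueness do the differentiation for free, while yours has the advantage of making explicit exactly where each extra derivative of the data is consumed (the $\ell=2$ extension of \eqref{eq:9.5}) and why no spectral information on $\mathcal A$ is needed (the vanishing of $F(0)$, which you correctly trace back to the $W^{k,1}_+$ conditions and to $\mathbf f^{(-1)}(0)=0$). Two small imprecisions, neither of which is a gap: $\dot F$ lies in $W^{1,1}(0,\infty;\mathcal H)$ but not necessarily in $W^{1,1}_+$, since $\dot F(0)$ involves $\mathbf f(0)$, which need not vanish ($\mathbf f$ is only assumed in $W^{1,1}$, not $W^{1,1}_+$); this is harmless because the classical-solution result for $\dot V=\mathcal A V+G$, $V(0)=0$, only needs $G\in W^{1,1}$, not $G(0)=0$. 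Likewise, what you call ``the $\mathcal C^1$ part of Theorem \ref{th:9.2}'' is really the underlying semigroup existence result for \eqref{eq:9.6} rather than the theorem itself. The concluding steps --- Korn's inequality for $\mathbf u\in\mathcal C^1([0,\infty);\mathbf H^1(\Omega))$, the definition \eqref{eq:9.9} for $\bs\sigma\in\mathcal C^1([0,\infty);\Ltwo)$, and the identification $\mathrm{div}\,\bs\sigma=\rho\ddot{\mathbf u}-\mathbf f\in\mathcal C([0,\infty);\mathbf L^2(\Omega))$ to upgrade $\bs\sigma$ to $\mathcal C([0,\infty);\mathbb H(\mathrm{div},\Omega))$ --- are exactly what the paper leaves implicit, and you supply them correctly.
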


\begin{proof}
If we solve problem \eqref{eq:9.2} with $(\dot{\mathbf f},\dot{\bs\alpha},\dot{\bs\beta})$ as data, and we integrate from $0$ to $t$, we obtain the solution of \eqref{eq:9.2} which is therefore an element of $\mathcal C^2([0,\infty);\mathcal H)$. This is enough to prove everything else. 
\end{proof}

The estimates of Corollary \ref{cor:9.3} greatly improve those of Section \ref{sec:7} (see Theorem \ref{th:7.3}) in two aspects: less regularity required for the data, and constants independent of the time variable.
There are three particular cases included in the analysis of this section that are worth paying special attention to. 
\begin{itemize}
\item[(a)] If $\ker\Cdiff=\{0\}$, then $M=\Ltwo$ and $\mathrm R=\mathrm I$ is just the canonical inclusion of $\Ltwo$ into its completion with respect to the {\em norm} $(a\Cdiff\,\cdot\,,\,\cdot\,)_\Omega^{1/2}$.
\item[(b)] When there exists $c_{\mathrm{diff}}>0$ such that
\begin{equation}\label{eq:Cdiffplus}
(\Cdiff\,\mathrm M):\mathrm M\ge c_{\mathrm{diff}} \|\mathrm M\|^2 
\qquad\forall\mathrm M\in \matR_\symm,
\end{equation}
there is no need to use the completion process since $M=\widehat M=\Ltwo$ and then $\widehat\Cdiff=\Cdiff$, $\widehat{m_a}=m_a$, and $\mathrm R$ is the identity operator. The space 
\[
\mathcal H:=L^2(\Omega;\mathbb R^d)\times \Ltwo \times \Ltwo,
\]
is now endowed with the norm
\[
\|(\mathbf u,\mathrm E,\mathrm S)\|_{\mathcal H}^2
	:=(\rho\,\mathbf u,\mathbf u)_\Omega	
		+(\mathrm C_0\mathrm E,\mathrm E)_\Omega 
		+(a\,\Cdiff\mathrm S,\mathrm S)_\Omega,
\]
which is equivalent to the usual norm. This makes the analysis of this strictly diffusive viscoelastic problem much simpler.
\item[(c)] When $\Cdiff=0$, we have $M=\widehat M=\{0\}$ and the third equation and unknown do not play any role. In this case the operators $\pm\mathcal A$ are maximal dissipative and therefore, $\mathcal A$ is the generator of a group of isometries in $\mathcal H$, i.e., this model is conservative. This should not be a surprise, since in this case we recover a first order formulation
\[
\rho\,\dot{\mathbf u}=	\mathrm{div}\, \mathrm C_0 \mathrm E+\mathbf f^{(-1)},
	\qquad
\dot{\mathrm E}=\eps{\mathbf u}
\]
of the classical linear elastic wave equation. 
\end{itemize}

The introduction of non-zero initial conditions for the most general version of this model is not trivial. When the model is strictly diffusive (case (b) in the above discussion, i.e., when \eqref{eq:Cdiffplus} holds), we are allowed to impose initial conditions 
\[
\uu(0)=\uu_0, \qquad \dot{\uu}(0)=\mathbf v_0, \qquad \bs\sigma(0)=\bs\sigma_0,
\]
which would be the natural ones for the formulation \eqref{eq:9.11}. This is done by modifying the system \eqref{eq:9.2}, using initial conditions $\uu(0)=\uu_0$, $\mathrm E(0)=0$, $\mathrm S(0)=0$, and adding $\rho\,\mathbf v_0$ to the right-hand side of \eqref{eq:9.2a}
and $\mathrm G_0$, with
\[
\Cdiff\mathrm G_0=a\boldsymbol\sigma_0-\CC_1\eps{\uu_0}=
a\bs\sigma_0-(\Cdiff+a\CC_0)\eps{\uu_0},
\]
to the right-hand-side of \eqref{eq:9.2c}. The general case is much more complicated, given that the initial conditions for $\bs\sigma(0)$ must match $\CC_0\eps{\uu_0}$ in purely elastic subregions.

\section{More semigroup analysis}\label{sec:10}

We are now going to take advantage of the preparatory work of Section \ref{sec:8} to give a quick view of the formulations and estimates that can be obtained for the Maxwell and Voigt models. 

\subsection{Maxwell's model}

Maxwell's model can be understood as the particular case of Zener's model when $\mathrm C_0=0$ and $\Cdiff$ is strictly positive. However, its analysis is not included in the treatment given in Section \ref{sec:SEMI}, due to the fact that $\mathrm C_0$ was used to define the norm of the space $\mathcal H$. We thus start again, with a new space
\[
\mathcal H:=\mathbf L^2(\Omega)\times \Ltwo,
\]
endowed with the norm
\[
\|(\mathbf u,\mathrm S)\|_{\mathcal H}^2
	:=(\rho\mathbf u,\mathbf u)_\Omega
		+(a\,\Cdiff\mathrm S,\mathrm S)_\Omega,
\]
which is equivalent to the usual norm. The domain of the operator
\[
\mathcal A(\mathbf u,S):=
	(\rho^{-1}\mathrm{div}\, \Cdiff\mathrm S,
	m_a^{-1}(\eps{\mathbf u}-\mathrm S))
\]
is now
\[
D(\mathcal A):=\mathbf H^1_D(\Omega)
	\times
	\left\{ \mathrm S\in \Ltwo\,: 
		\mathrm{div}\,\Cdiff\mathrm S\in \mathbf L^2(\Omega),
		\gamma_N \Cdiff\mathrm S=0\right\}. 
\]
The operator $\mathcal A$ is maximal dissipative. The proof of surjectivity of $U\mapsto U-\mathcal AU$ starts with the solution of the coercive problem 
\begin{alignat*}{6}
& \mathbf u\in \mathbf H^1_D(\Omega),\\
& (\rho\,\mathbf u,\mathbf v)_\Omega
	+(\Cdiff (1+a)^{-1}\eps{\mathbf u},\eps{\mathbf v})_\Omega
		= && (\rho\,\mathbf f,\mathbf v)_\Omega\\
		 &&& -((a/(1+a)) \Cdiff  \mathrm G,\eps{\mathbf v})_\Omega
		\quad\forall\mathbf v\in \mathbf H^1_D(\Omega),
\end{alignat*}
for given $(\mathbf f,\mathrm G)\in \mathcal H$. (Note that the strict positivity of $\Cdiff$ is key for this argument to hold.) This is followed by the definition of
\[
\mathrm S=(1+a)^{-1}(\eps{\mathbf u}+a \mathrm G).
\]
Using the operator $\mathcal A$ in an equation of the form \eqref{eq:9.6}, we can prove that the hypotheses of Theorem \ref{th:9.2} are sufficient to provide a solution $(\mathbf u,\mathrm S)\in \mathcal C^1([0,\infty);\mathcal H)$ of the problem 
\begin{alignat*}{6}
\rho \dot{\mathbf u}(t)
	&=\mathrm{div} \Cdiff\mathrm S(t)
		+\mathbf f^{(-1)}(t) & \qquad & t\ge 0,\\
\mathrm S(t)+a\,\dot{\mathrm S}(t)
	&=\eps{\mathbf u(t)} && t\ge 0,\\
\gamma_D \mathbf u(t)
	&=\bs\alpha(t) &&t\ge 0,\\
\gamma_N \Cdiff\mathrm S(t)
	&=\bs\beta^{(-1)}(t) &&t\ge 0,
\end{alignat*}
with vanishing initial conditions. Introducing the stress tensor
\[
\bs\sigma(t):=\Cdiff\dot{\mathrm S}(t),
\]
we have a solution of 
\begin{alignat*}{6}
\rho\,\ddot{\mathbf u}(t) &=\mathrm{div}\,\bs\sigma(t)+\mathbf f(t) 
	& \quad & \mbox{a.e.}-t,\\
\bs\sigma(t)+a\,\dot{\bs\sigma}(t)
	&=\Cdiff\eps{\dot{\mathbf u}(t)}
		& & \mbox{a.e.}-t,\\
\gamma_D\mathbf u(t) &=\bs\alpha(t) && t\ge 0,\\
\gamma_N\bs\sigma(t) &=\bs\beta(t) && \mbox{a.e.}-t,
\end{alignat*}
with vanishing initial conditions. The estimates of Corollary \ref{cor:9.3} hold for this model as well. 

A combination of Zener's and Maxwell's models is also available. It requires an even more general framework so that $\mathrm C_0$ and $\Cdiff$ can vanish on separate parts of the domain as long as a certain combination stays strictly positive. (See the variational problem \eqref{eq:9.1} that is solved as a starting step to prove maximal dissipativity. As long as $\mathrm C_0+m_{1+a}^{-1}\Cdiff$ is a Hookean model, everything else will work.) The analysis would require a completion process with respect to the seminorm $(\mathrm C_0\cdot,\cdot)_\Omega^{1/2}$ and the corresponding restriction operator. This is a simple (while a little cumbersome) extension that the reader can do to prove their handle of the techniques developed above. 

\subsection{Voigt's model}\label{sec:10.2}

The analysis of Voigt's viscoelastic model (Zener's model with $a=0$, $\CC_0$ strictly positive and $\CC_\diff\ge 0$), including areas transitioning to classical linear elasticity, follows from a simple modification of the ideas of Section \ref{sec:SEMI}. In Voigt's model $\Cdiff=\CC_1$ plays the role of a dissipative term. The semigroup analysis of this model is slightly different in involving a second order differential operator. Like in Maxwell's model, there is no need to involve a completion process to handle transitions to classical linear elasticity. We now consider the following ingredients:
\begin{alignat*}{6}
\mathcal H &:= \mathbf L^2(\Omega)\times \mathbb L^2(\Omega),\\
\| (\mathbf u,\mathrm E)\|^2_{\mathcal H}
	&:=(\rho\mathbf u,\mathbf u)_\Omega+(\CC_0\mathrm E,\mathrm E)_\Omega,\\
D(\mathcal A) &:=
\left\{ (\mathbf u,\mathrm E)\in \mathbf H^1_D(\Omega)\times \mathbb L^2(\Omega)\,:\,
	\begin{array}{l}
	\mathrm{div}(\CC_0\mathrm E+\Cdiff \eps{\mathbf u})\in \mathbf L^2(\Omega),\\
	\gamma_N(\CC_0\mathrm E+\Cdiff \eps{\mathbf u})=0
	\end{array}\right\},\\
\mathcal A(\mathbf u,\mathrm E)
	&:=(\rho^{-1} \mathrm{div}(\CC_0\mathrm E+\Cdiff \eps{\mathbf u}),\eps{\mathbf u}).
\end{alignat*}
A simple argument shows that
\[
(\mathcal A(\mathbf u,\mathrm E),(\mathbf u,\mathrm E))_{\mathcal H}
=-(\Cdiff\eps{\mathbf u},\eps{\mathbf u})_\Omega\le 0
\qquad\forall (\mathbf u,\mathrm E)\in D(\mathcal A). 
\]
If we take $(\mathbf f,\mathrm F)\in \mathcal H$, solve the coercive problem
\begin{alignat*}{6}
& \mathbf u\in \mathbf H^1_D(\Omega),\\ 
& (\rho\,\mathbf u,\mathbf v)_\Omega
+((\mathrm C_0+\Cdiff )\eps{\mathbf u},\eps{\mathbf v})_\Omega
=&& (\rho\,\mathbf f,\mathbf v)_\Omega -(\mathrm C_0\mathrm F,\eps{\mathbf v})_\Omega \quad \forall\mathbf v\in \mathbf H^1_D(\Omega),
\end{alignat*}
and define $\mathrm E:=\eps{\mathbf u}+\mathrm F$, it is easy to prove that $(\mathbf u,\mathrm E)\in D(\mathcal A)$ and $(\mathbf u,\mathrm E)-\mathcal A(\mathbf u,\mathrm E)=(\mathbf f,\mathrm F)$. Therefore, $\mathcal A$ is maximal dissipative. 

Going carefully over the proof of Theorem \ref{th:9.2}, it is easy to see that with the same hypotheses on the data $\mathbf f$, $\bs\alpha$, and $\bs\beta$, we have a unique $(\mathbf u,\mathrm E)\in \mathcal C^1([0,\infty);\mathcal H)$, vanishing at zero, and solving 
\begin{alignat*}{6}
\rho \dot{\mathbf u}(t)
	&=\mathrm{div}(\mathrm C_0\mathrm E(t)+\Cdiff\eps{\uu(t)})
		+\mathbf f^{(-1)}(t) & \qquad & t\ge 0,\\
\dot{\mathrm E}(t)
	&=\eps{\mathbf u(t)} &&t\ge0,\\
\gamma_D \mathbf u(t)
	&=\bs\alpha(t) &&t\ge 0,\\
\gamma_N(\mathrm C_0\mathrm E(t)+\Cdiff\eps{\uu(t)})
	&=\bs\beta^{(-1)}(t) &&t\ge 0.
\end{alignat*}
The associated stress tensor is defined by 
\[
\bs\sigma(t):=\CC_0\dot{\mathrm E}(t)+\Cdiff\eps{\dot{\mathbf u}(t)}=
\CC_0\eps{\mathbf u(t)}+\Cdiff\eps{\dot{\mathbf u}(t)}
\]
and the resulting pair $(\mathbf u,\bs\sigma)$ is a solution to \eqref{eq:9.11} (with $a=0$) satisfying also the estimates of Corollary \ref{cor:9.3}.

\section{Some experiments}\label{sec:11}

We now present some numerical experiments of the various viscoelastic models that we described in Section \ref{sec:4}.  We use finite elements for space discretization and a trapezoidal rule-based convolution quadrature (TRCQ) for time discretization \cite{BaSc2012,HaSa2016,Lubich1994}.  The numerical experiments will be divided into three groups. First, we investigate 1D uniaxial wave propagation. Through these experiments, we observe how the viscoelastic behavior is dependent upon the choosing of parameters in the constitutive equations.  In the second group of experiments we compare the behaviors of 1D uniaxial wave propagation in elastic, classical viscoelastic, fractional viscoelastic, and heterogeneous models by plotting their 2D space-time contour graphs. In the heterogeneous model, we decompose the region into two subdomains with different viscoelastic models, where the reflection and refraction of waves can be observed at the transition interface.  Finally, we present the 3D simulation of a viscoelastic rod. Similar to the heterogeneous domain in the previous experiments, the rod is decomposed into two different subdomains.  The snapshots we present show how the simulation accurately captures the memory and relaxation effects of the rod under a sudden change in displacement. The numerical analysis of the discretization schemes employed in this section is the goal of future research. Tests have been performed in sufficiently refined space-and-time meshes to obtain some sort of eye-ball convergence to a solution.

\subsection{1D experiments} \label{Sec:NumEx1}

For simplicity, in the one-dimensional examples we will use traditional PDE notation, as opposed to the notation of evolutionary equations used throughout the paper.
We first present numerical experiments for different fractional models in one dimension
\begin{subequations} \label{eq:11.1}
\begin{alignat}{4}
\rho u_{tt} &= \sigma_x \qquad && x\in [0,1], \quad t\in [0,40],\\
\label{eq:11.1b}
u(0,t) &= g(t) && t\in [0,40],\\
\label{eq:11.1c}
\sigma(1,t) &= 0&& t\in [0,40],\\
u(x,0) &= u_t(x,0) = 0 &\qquad & x\in [0,1],
\end{alignat}
where the constitutive relation that determines the model is defined through $\sigma$ and $\rho$.  We use the window function displayed in the left of Figure \ref{fig:window} as Dirichlet boundary data at $x = 0$, while we take a homogeneous Neumann boundary condition at $x=1$. 
The strain-to-stress relation is given by a general formula
\begin{equation}
\sigma+ a\,\partial^\nu_t \sigma=\mathrm C_0 u_x+\mathrm C_1 \partial^\nu_t u_x,
\end{equation}
\end{subequations}
for parameters $\mathrm C_0, \mathrm C_1$, $a$ and $\nu$ to be determined. For the discretization in space we use $\mathcal P_4$ finite elements on a mesh with 513 subintervals of equal size. Discretization in time is carried out using a TRCQ with 10,240 time-steps of equal size in the interval $[0,40]$. 
\begin{figure}[H]
\centering
\includegraphics[width=0.4 \textwidth]{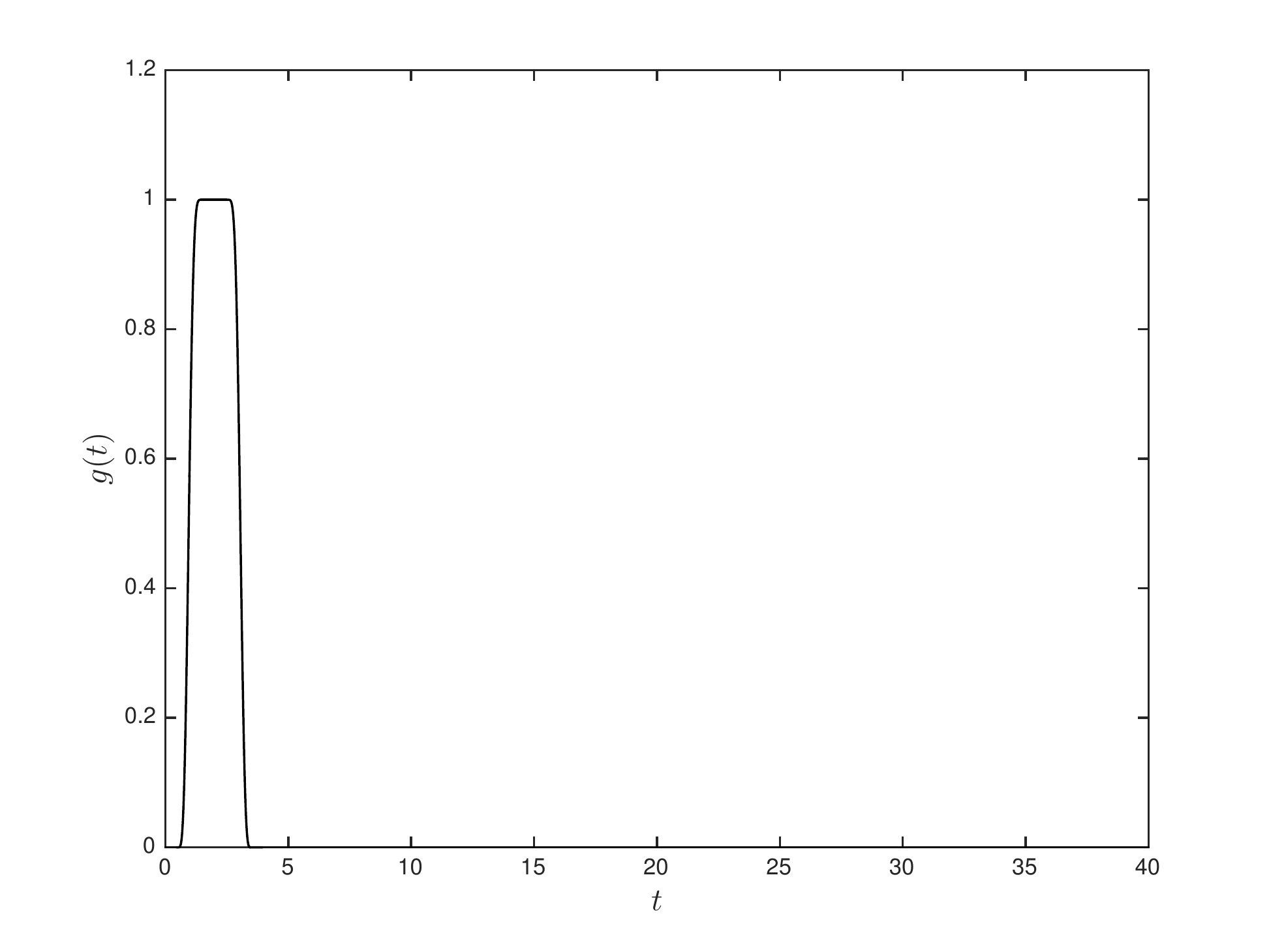} 
\includegraphics[width=0.4 \textwidth]{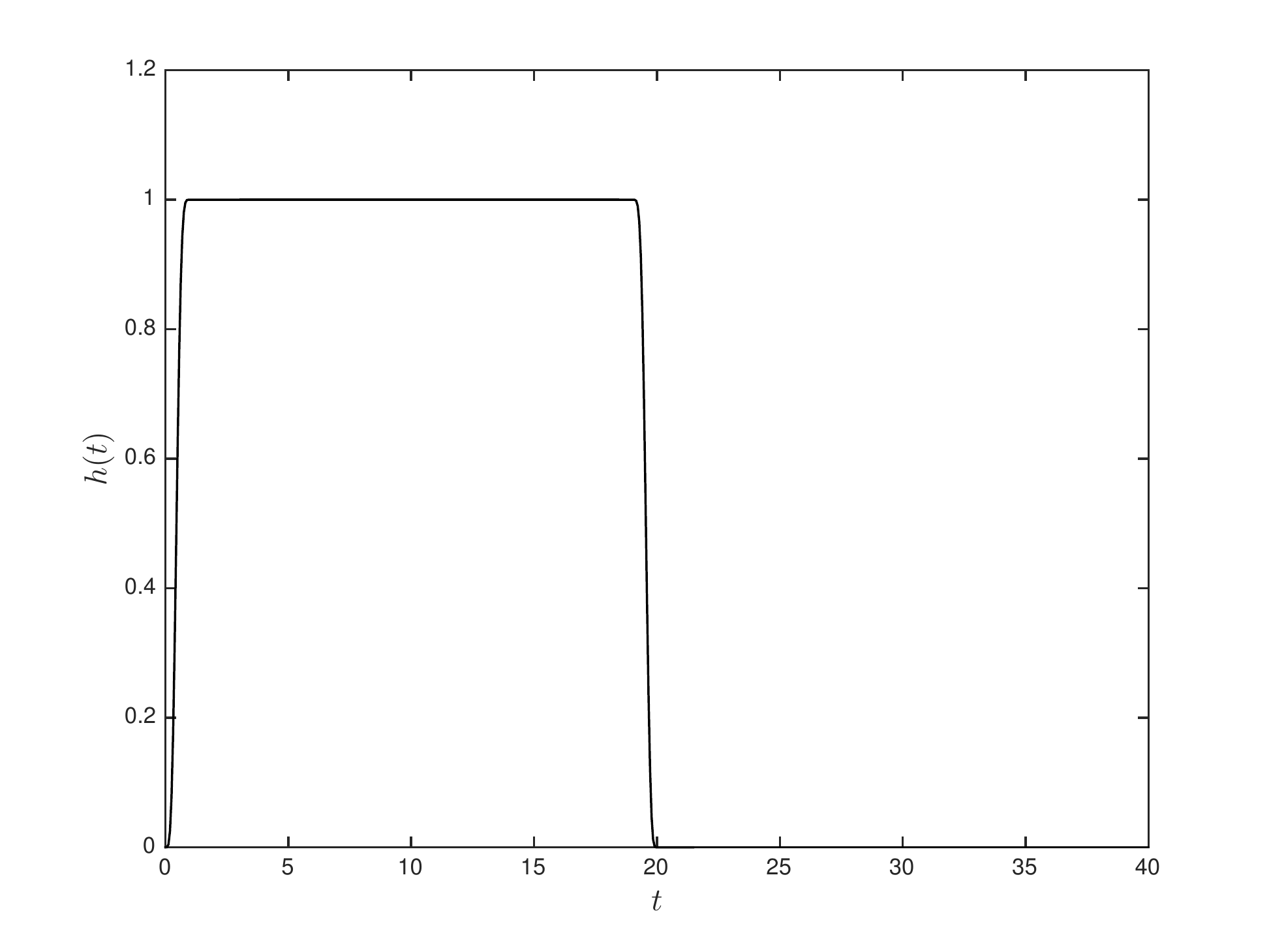}
\caption{{\footnotesize The window function $g(t)$ is smoothly changing in $(0.5,1.5)\cup(2.5,3.5)$, and constant on the rest of the domain. The function $h(t)$ has a similar shape, although the upper plateau (forced normal stress) is longer.}}\label{fig:window}
\end{figure}
For the first two sets of experiments, we implement the Dirichlet boundary condition $g(t)$ in Figure \ref{fig:window} at $x=0$ and observe the evolution of $u(1,t)$.  We use: (a) the values given in Table \ref{table00}, varying $\CC_1$, to create the results of Figure \ref{fig:plotC1all}, and (b) the values in Table \ref{table01}, varying $\nu$, for the results of Figure \ref{fig:plotnuall}.
%
\begin{table}[H] \center{
\begin{tabular}{|l|c|c|c|}
\hline 
& Zener  & Maxwell & Voigt \\
\hline
$\mathrm C_0$ & 1.5 & 0 & 1.5 \\
\hline
$\mathrm C_1$ & $0.75, 1, 2.75$ & $0.05, 0.25, 2$ & $0, 0.25, 2$  \\
\hline
$a$ & 0.5 & 0.5 & 0 \\
\hline
$\nu$ & $1$ & $1$ & $1$ \\
\hline
$\rho$ & $1$ & $1$ & $1$ \\
\hline
\end{tabular}
\caption{{\footnotesize The parameters used to create the plots given in Figure \ref{fig:plotC1all}. The first choice of $\mathrm C_1$ for the Zener and Voigt models reduce the model to linear elasticity.}}
\label{table00}}
\end{table}
%
\begin{figure}[H]
\centering
\foreach \index in {1, 2, 3} {
	\includegraphics[scale=0.3]{c1changeAll\index.pdf}
}
\caption{{\footnotesize Effect of changing $\mr C_1$ using the parameters given in Table \ref{table00}.}}\label{fig:plotC1all}
\end{figure}
In Table \ref{table00} the values for $\mr C_1$ are chosen so that the parameter $\mr C_{\mr{diff}}$, which controls the diffusion, takes same values for the three different models, except for Maxwell, where $\mr C_{\mr{diff}}=0$ results in a model that is identically zero. To avoid this while still getting comparable results, we use $\mr C_1 = 0.05$ for the first value in the Maxwell model.  In Figure \ref{fig:plotC1all}, as we increase $\mr C_1$, all three models show a faster energy dissipation. Compared to the Zener and Voigt models, the Maxwell model exhibits less oscillations as a response to the Dirichlet boundary condition.
%
\begin{table}[H] \center{
\begin{tabular}{|l|c|c|c|}
\hline 
& Zener  & Maxwell & Voigt \\
\hline
$\mathrm C_0$ & 1.5 & 0 & 1.5 \\
\hline
$\mathrm C_1$ & $1$ & $1$ & $1$  \\
\hline
$a$ & 0.5 & 0.5 & 0 \\
\hline
$\nu$ & $0.05, 0.5, 0.95$ & $0.05, 0.5, 0.95$ & $0.05, 0.5, 0.95$ \\
\hline
$\rho$ & $1$ & $1$ & $1$ \\
\hline
\end{tabular}
\caption{{\footnotesize The parameters used to create the plots given in Figure \ref{fig:plotnuall}.}}
\label{table01}}
\end{table}

\begin{figure}[H]
\centering
\foreach \index in {1, 2, 3} {
	\includegraphics[scale=0.3]{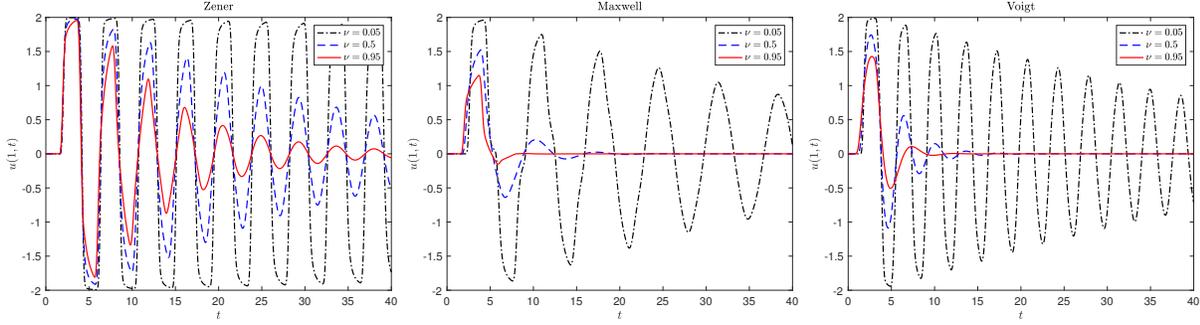}
}
\caption{{\footnotesize Effect of changing $\nu$ using the parameters given in Table \ref{table01}.}}\label{fig:plotnuall}
\end{figure}

In Figure \ref{fig:plotnuall}, we observe that the decreasing the fractional power $\nu$  leads to a slower rate of energy dissipation. We also notice that the change of $\nu$ does not have any obvious effect on the frequency of the oscillations.

We now change the boundary conditions \eqref{eq:11.1b} and \eqref{eq:11.1c} to
\[
u(0,t)=0 \qquad \sigma(1,t)=h(t),
\]
where $h$ is the function in the right of Figure \ref{fig:window}.
Once again, we vary $\nu$ to see the effects that the fractional order of the derivative has on the Zener, Maxwell, and Voigt models respectively.  The parameters we choose for this experiment is given in Table \ref{table03} and we again plot $u(1,t)$ in Figure \ref{fig:multiPlts}.
\begin{table}[H] \center{
\begin{tabular}{|l|c|c|c|}
\hline 
& Zener  & Maxwell & Voigt \\
\hline
$\mathrm C_0$ & 1 & 0 & 1 \\
\hline
$\mathrm C_1$ & 1 & 1 & 1 \\
\hline
$a$ & 0.5 & 0.5 & 0 \\
\hline
$\nu$ & $0.25,0.5,0.75,1$ & $0.25,0.5,0.75,1$ & $0.25,0.5,0.75,1$ \\
\hline
$\rho$ & $1$ & $1$ & $1$ \\
\hline
\end{tabular}
\caption{{\footnotesize The parameters used to create the plots given in Figure \ref{fig:multiPlts}}}
\label{table03}}
\end{table}
\begin{figure}[H]
\centering
\includegraphics[scale=0.3]{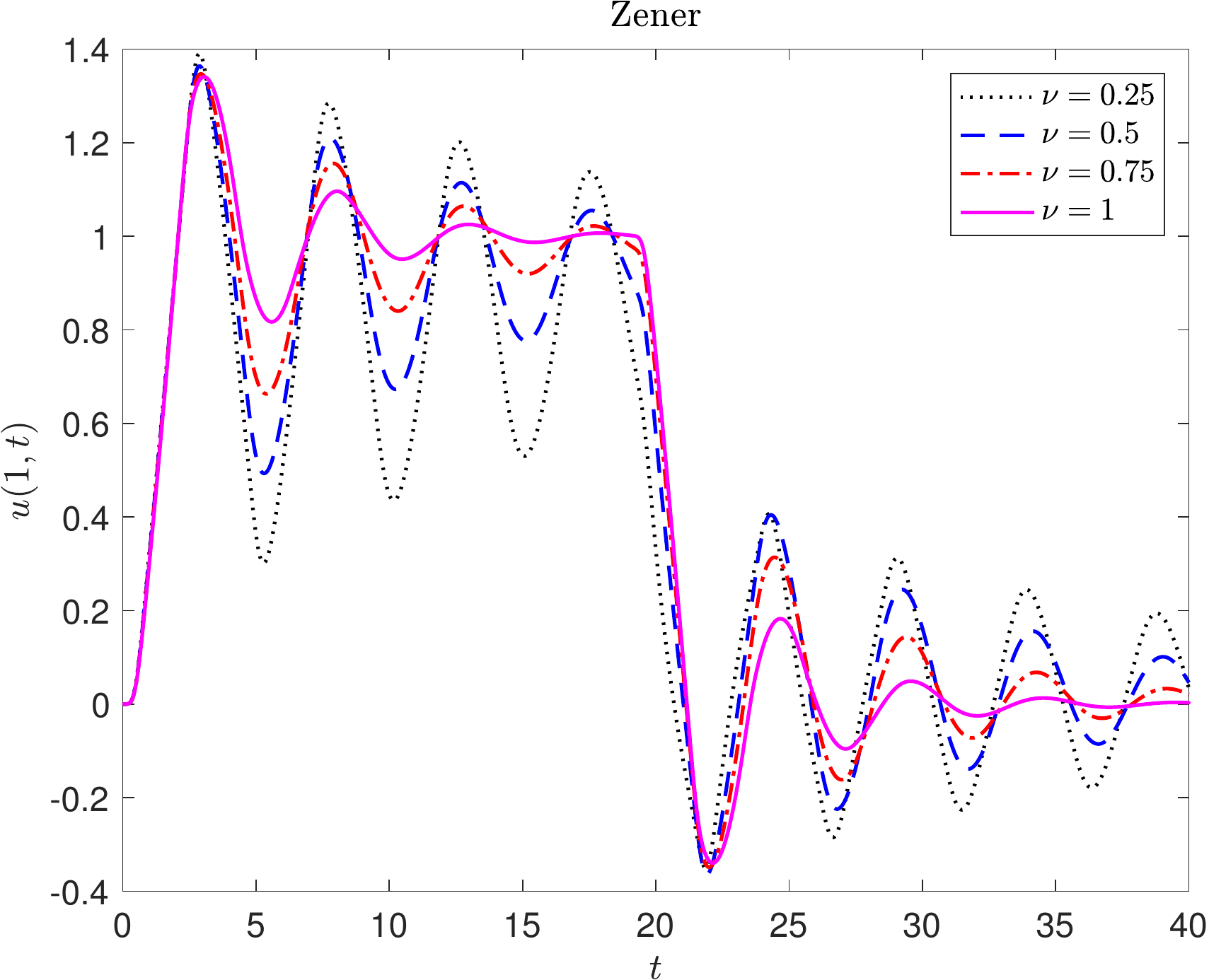}
\includegraphics[scale=0.3]{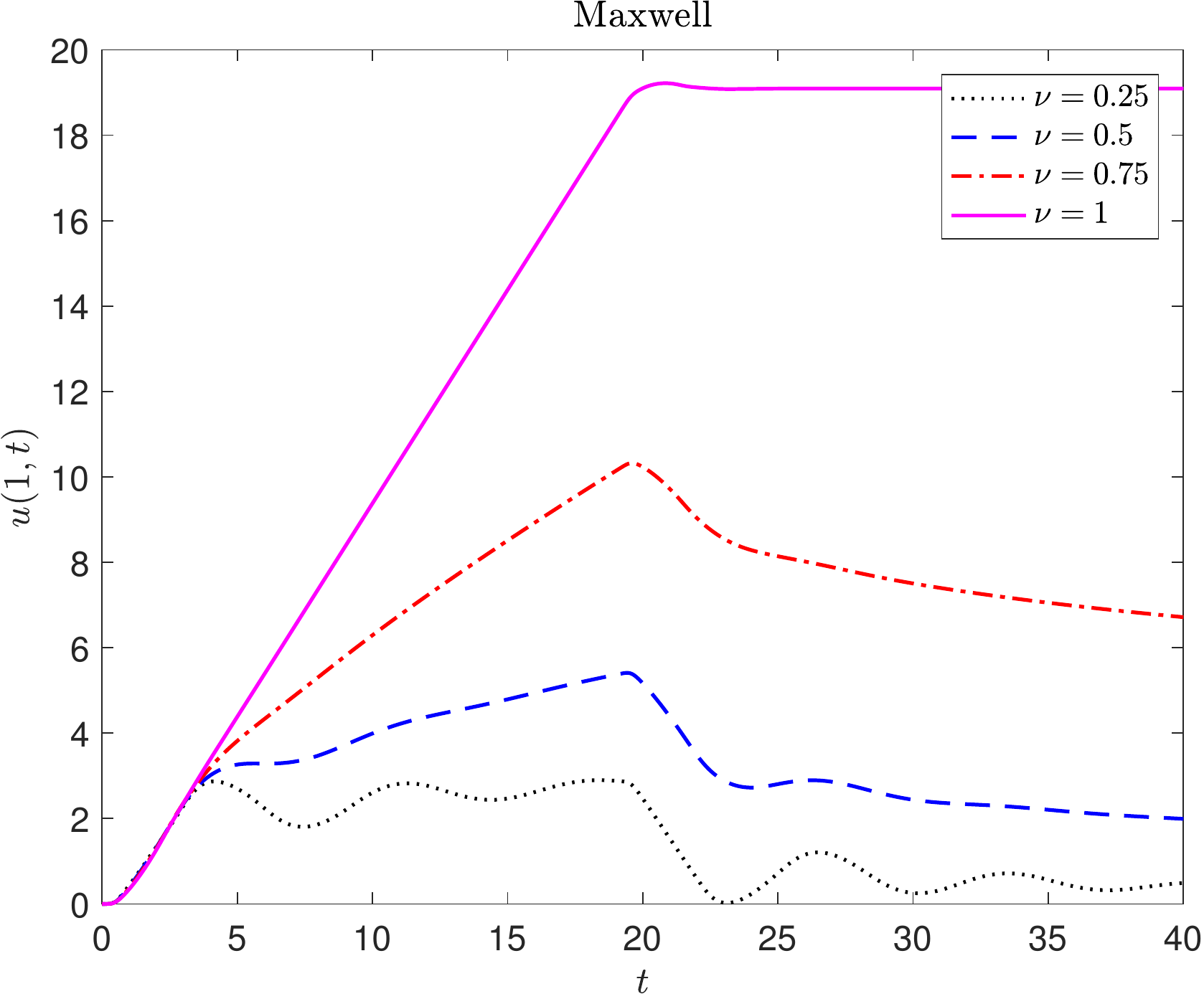}
\includegraphics[scale=0.3]{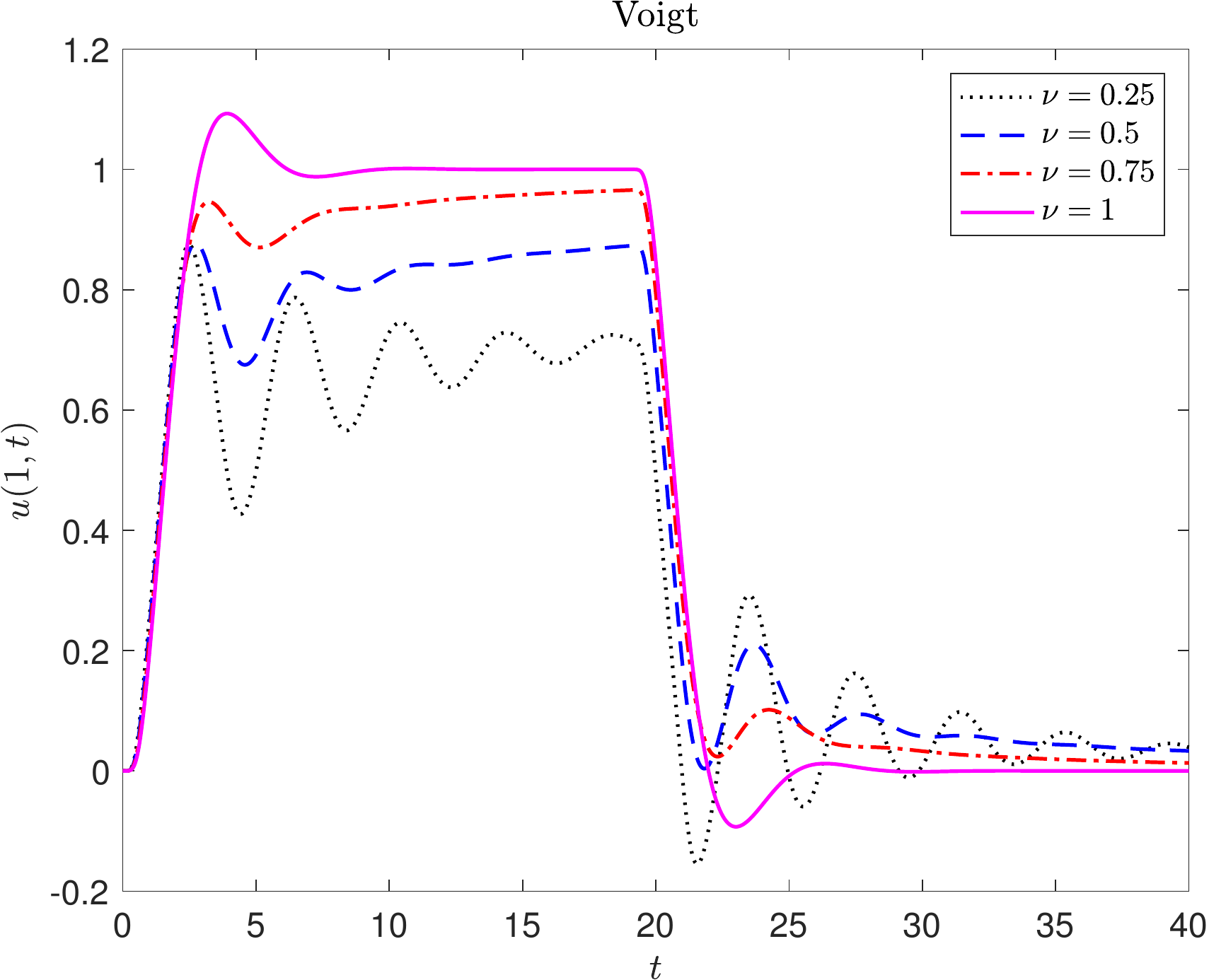}
\caption{{\footnotesize Each plot shows the displacement of a 1D viscoelastic rod evaluated at the right endpoint, $x=1$ for different values of $\nu$. From left to right are the Zener, Maxwell, and Voigt models.}} \label{fig:multiPlts}
\end{figure}

Here, both Zener and Voigt models show exponential rate response to the suddenly applied traction $h(t)$ and then converge to some equilibrium state. The Maxwell model, as expected, shows a linear rate of creep when the traction is constant in time. As the fractional power $\nu$ decreases, we observe that the amplitude of oscillations is larger for the Zener and Voigt models, on the other hand we see the creep rate is slower for the Maxwell model.

\subsection{Spacetime plots}\label{sec:11.2}
We now study space-time contour plots of the displacement solution of (\ref{eq:11.1}) with Zener, Maxwell and Voigt models. We show three simulations focusing on one of these models where in each experiment we compare it with its fractional version, and observe its behavior in a heterogenous domain coupled with an elastic model. When we work on a heterogenous domain we split the interval $[0,1]$ into $[0,1/2)$ and $[1/2,1]$, where the first half is elastic and the second half is one of the models we are comparing: Zener, Maxwell or Voigt. We show space-time plots of the displacement corresponding to different models side by side where elastic model is included in all cases for the sake of comparison. We implement two different signals as Dirichlet boundary condition: a single pulse and a train of pulses (see Figure \ref{fig:sigs}). For each experiment we display eight plots where the first four are the results of a single pulse while the last four are the results of the same experiment but for a train of pulses. 

\begin{figure}[H] \center{
\includegraphics[width = 0.45\textwidth]{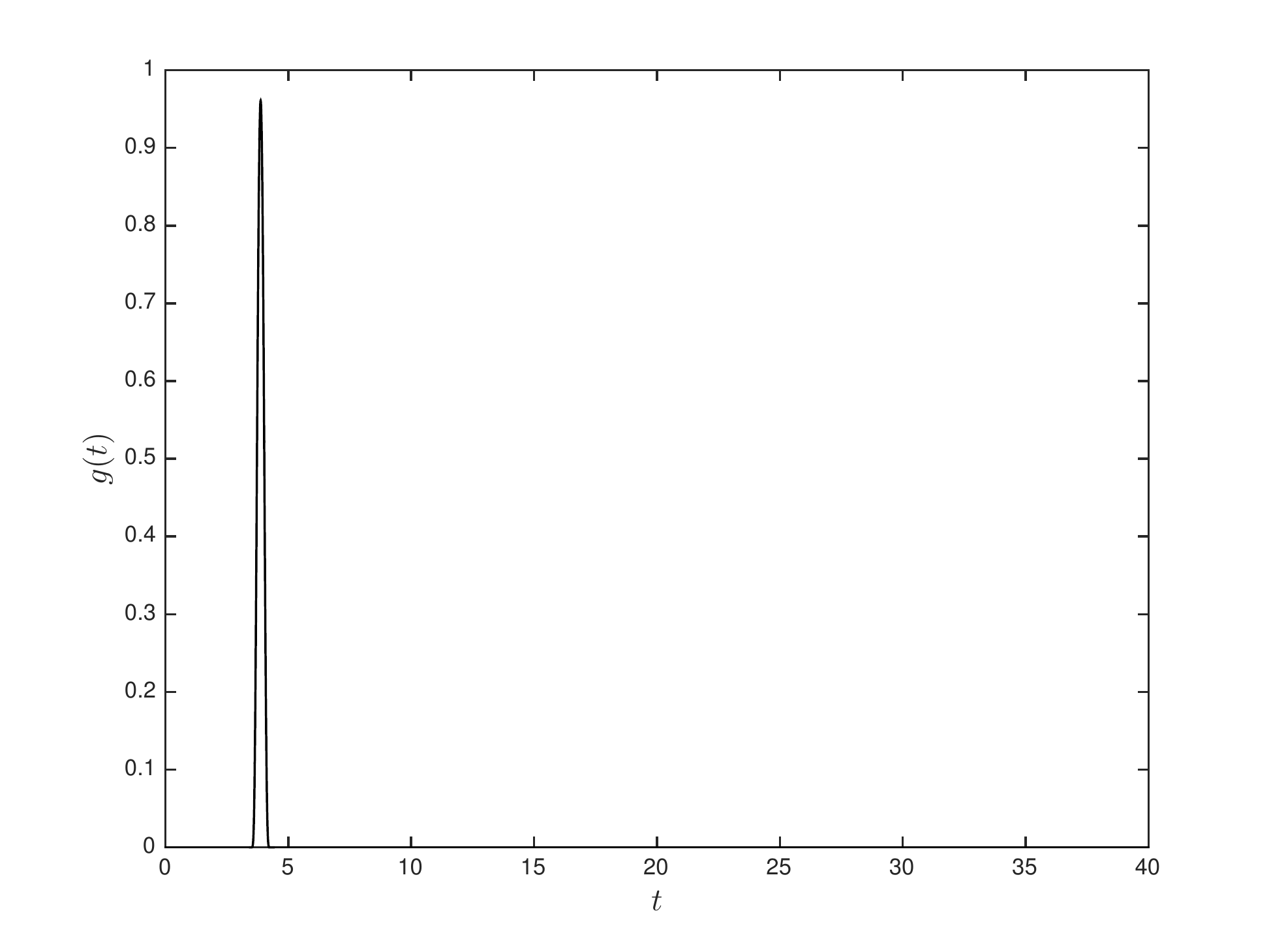}
\includegraphics[width = 0.45\textwidth]{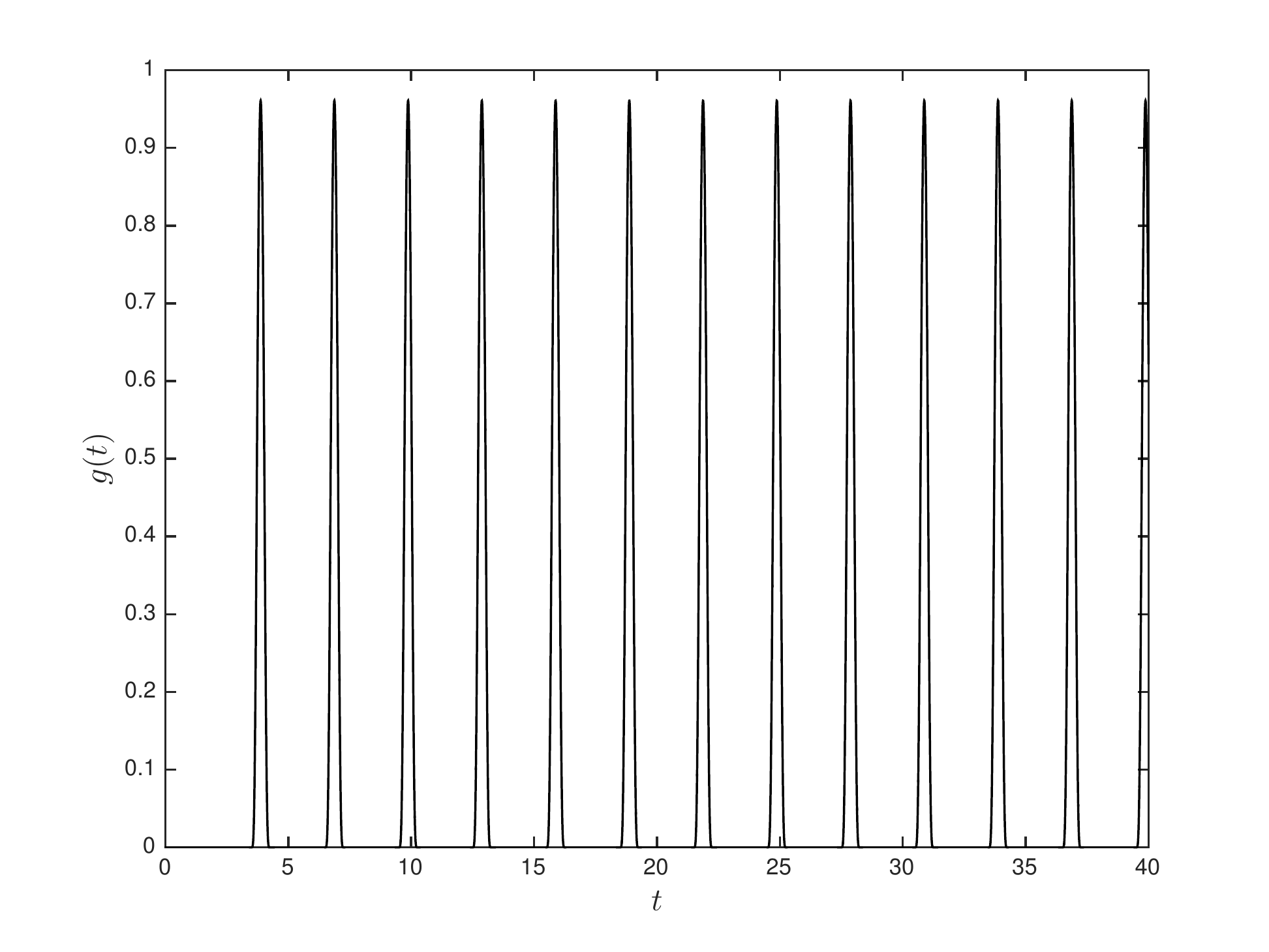}
\caption{{\footnotesize The Dirichlet boundary conditions used for the 2D spacetime simulations: single pulse and a periodic train of pulses, which will make the solution transition to time-harmonicity.}} \label{fig:sigs}}
\end{figure}

Our first test focuses on the Zener model where we utilize the parameters given in Table \ref{tab:Zener}, and Figure \ref{fig:2DZener} shows the outcome of this experiment. Here, the contrast between the energy conservative elastic model and the dissipative Zener model can easily be seen. We also notice that fractional Zener model displays slower dissipation than the classical model. In the heterogenous domain we observe a reflection and refraction of waves at the interface $x=1/2$. Although the elastic model is conservative, in the case of a heterogenous domain we see the dissipation of the Zener model affecting the coupled system with a loss in wave amplitude.  We also observe that in the results whose Dirichlet boundary condition is a train of pulses (the four panels on the right), the solution enters quickly into a time-harmonic regime.
%
\begin{table}[H] \center{
\begin{tabular}{|l|c|c|c|c|}
\hline 
& Elastic & Zener & Fractional Zener & Heterogeneous Domain\\
\hline
$\mathrm C_0$ & 1.75 & 1.5 & 1.5& 1.75 $(x<1/2)$, \: 1.5 $(x \ge 1/2)$\\
\hline
$\mathrm C_1$ & 1.75 & 1.75 & 1.75 & 1.75\\
\hline
$a$ & 1 & 0.5 & 0.5 & 1 $(x<1/2)$, \: 0.5 $(x \ge 1/2)$\\
\hline
$\nu$ & 1 & 1 & 0.3 & 1\\
\hline
$\rho$ & 10 & 10 & 10 & 10 \\
\hline
\end{tabular}
\caption{{\footnotesize The parameters used in in the fractional Zener model simulations to create the 2D spacetime plots shown in Figure \ref{fig:2DZener}}}
\label{tab:Zener}}
\end{table}
\begin{figure}[H] \center{
\includegraphics[clip,trim={1.9cm 0cm 1cm 0cm}, scale=0.47]{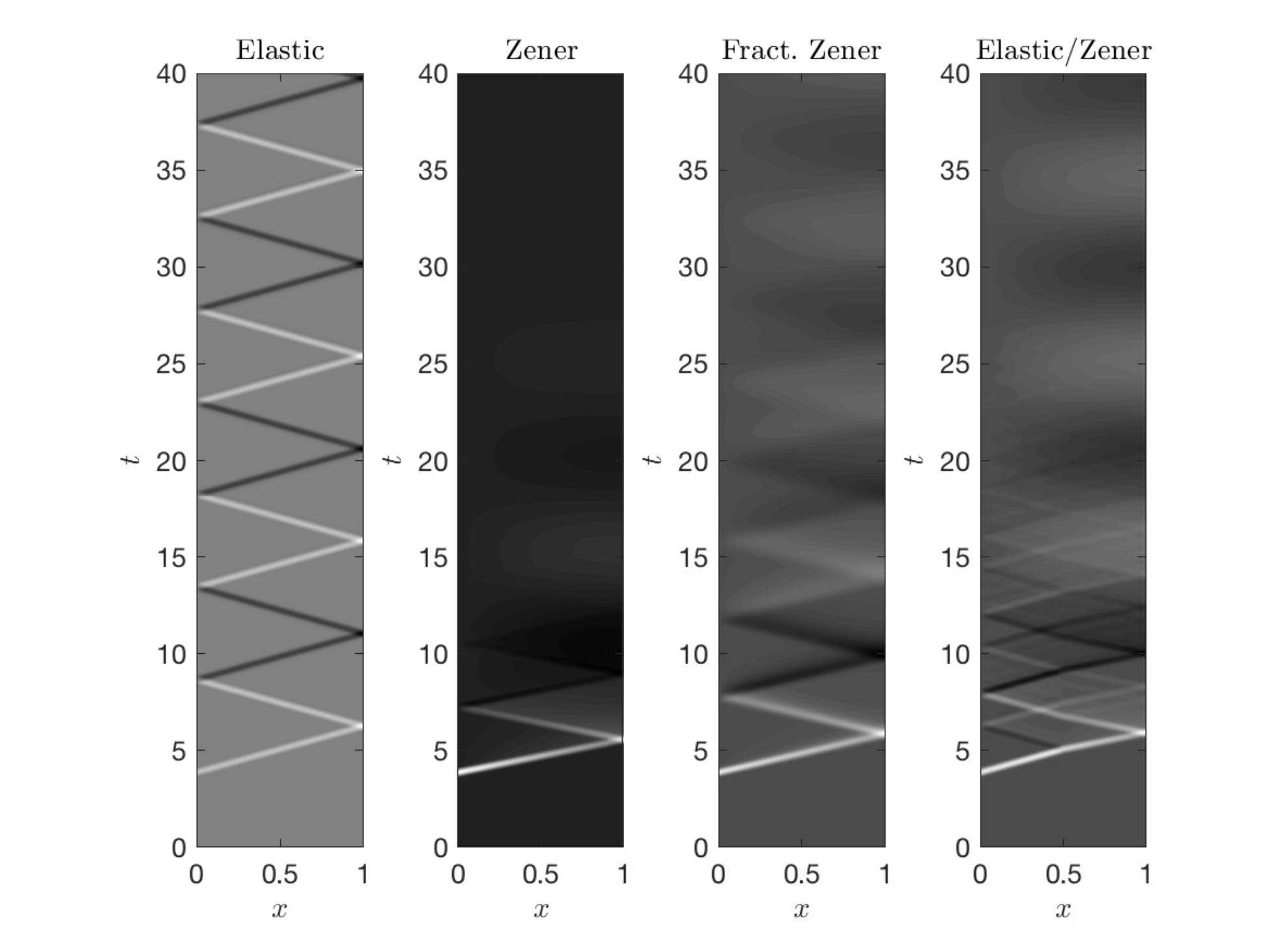}
\includegraphics[clip,trim={1.1cm 0cm 1.8cm 0cm}, scale=0.47]{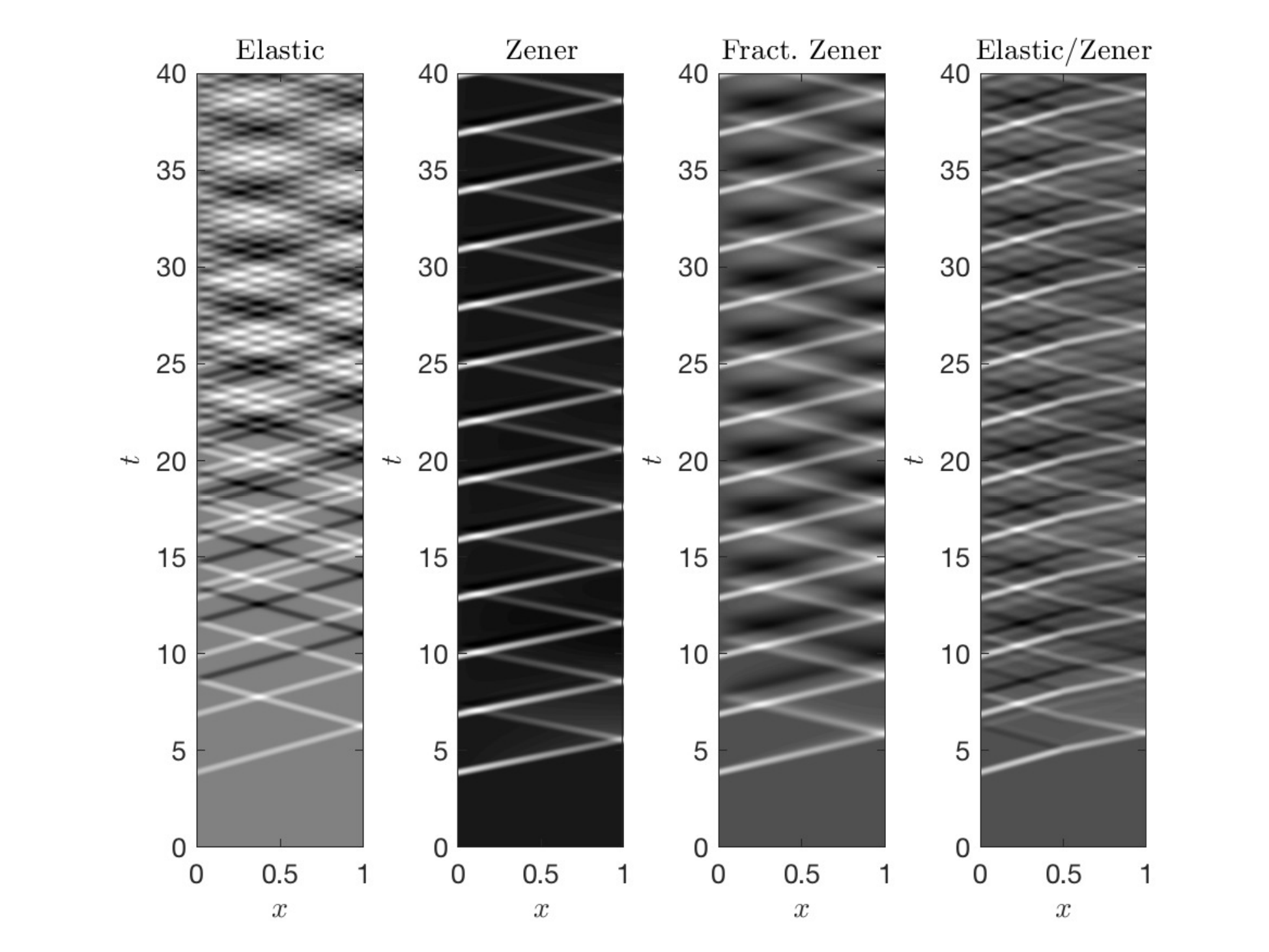}
\caption{{\footnotesize Space-time plots with parameters described in Table \ref{tab:Zener} where first and last four subplots correspond to the signals shown on the left and right of Figure \ref{fig:sigs}.}} \label{fig:2DZener} }
\end{figure}
We perform a similar comparison for the Maxwell model using the parameters given in Table \ref{tab:maxwell} and collecting the results in Figure \ref{fig:2DMaxwell}. When a single pulse is used, we notice that the waves in the Maxwell model exhibit dissipation. Moreover, the fractional Maxwell reveals less dissipation with a little dispersion. In the heterogenous domain the reflections at $x=1/2$ are less obvious for both of the input signals when comparing to the corresponding Zener simulations.
%
\begin{table}[H]
\centering
\begin{tabular}{|l|c|c|c|c|}
\hline 
		& Elastic& Maxwell & Fractional Maxwell & Heterogeneous Domain\\
\hline
$\mathrm C_0$ & 1.75 & 0 & 0& $1.75(x<1/2)$, \: 0 $(x \ge 1/2)$\\
\hline
$\mathrm C_1$ & 1.75 & 1.75 & 1.75 & 1.75\\
\hline
$a$ & 1 & 1 & 1 & 1\\
\hline
$\nu$ & 1 & 1 & 0.3 & 1\\
\hline
$\rho$ & 10 & 10 & 10 & 10 \\
\hline
\end{tabular}
\caption{{\footnotesize The parameters used in the Maxwell model simulations to create the 2D spacetime plots shown in Figure \ref{fig:2DMaxwell}.}}\label{tab:maxwell}
\end{table}
%
\begin{figure}[H] \center{
\includegraphics[clip,trim={1.9cm 0cm 1cm 0cm}, scale=0.47]{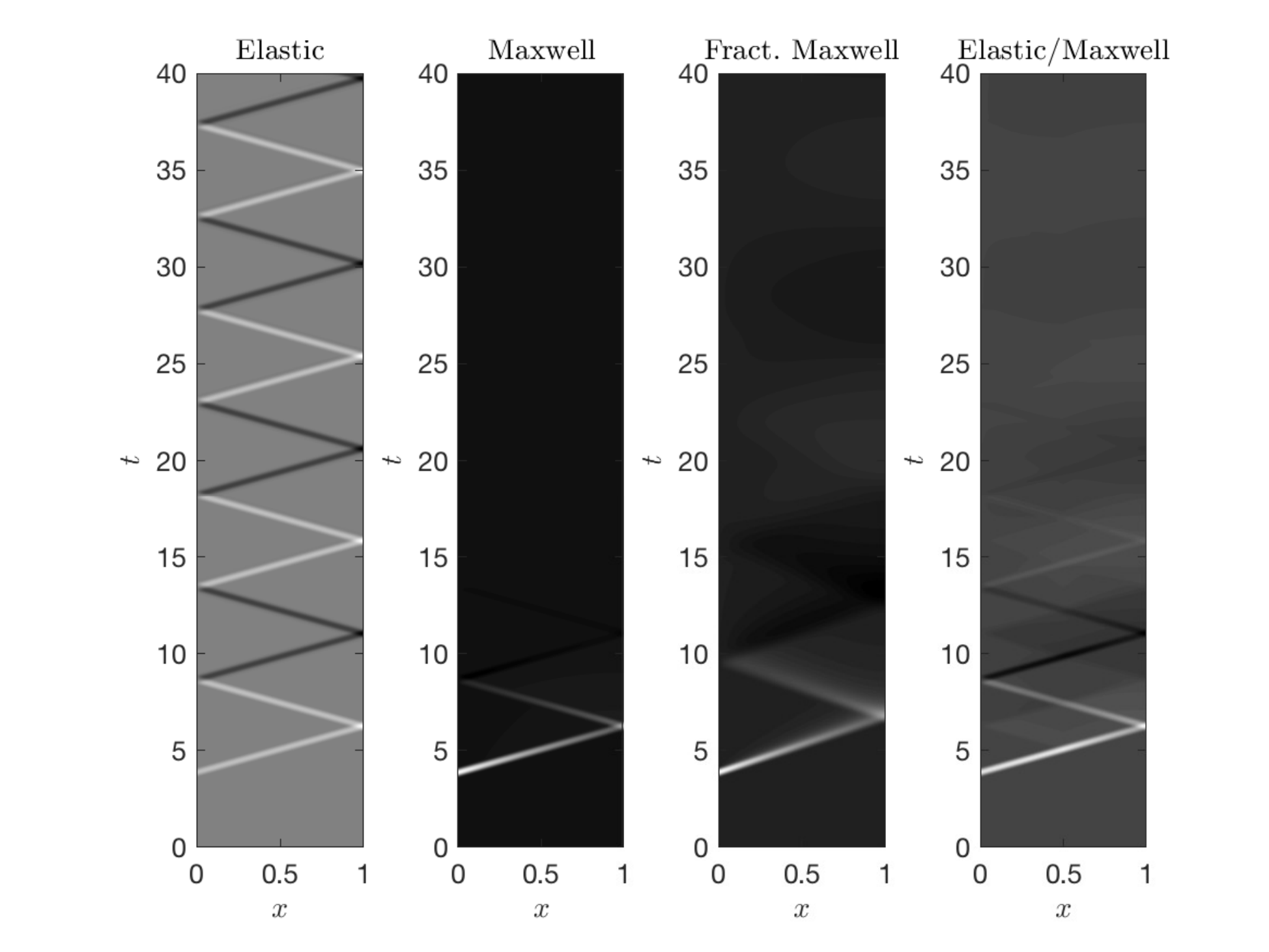}
\includegraphics[clip,trim={1.5cm 0cm 1.5cm 0cm}, scale=0.47]{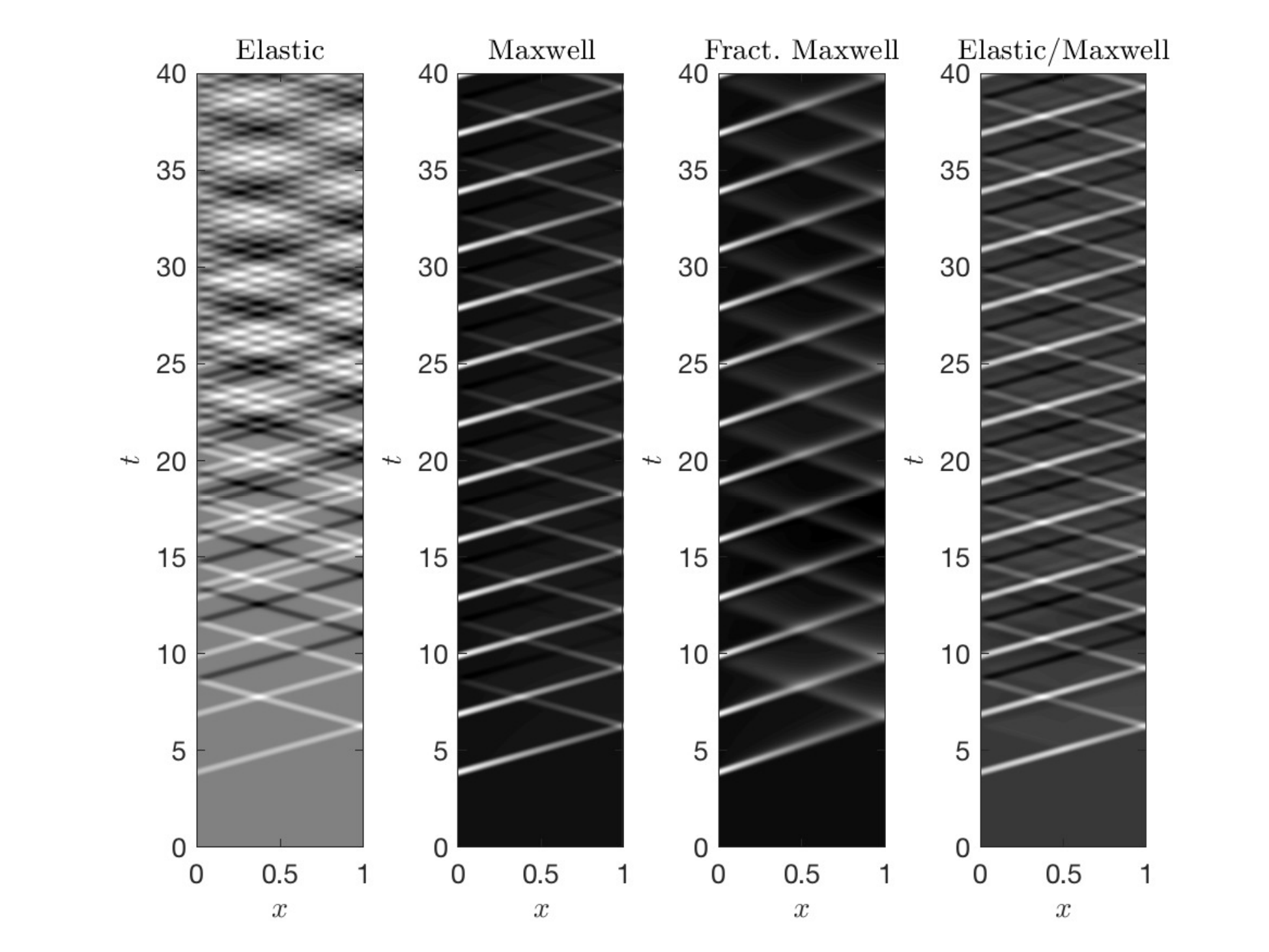}
\caption{{\footnotesize Space-time plots with parameters described in Table \ref{tab:maxwell} where first and last four subplots correspond to the signals shown on the left and right of Figure \ref{fig:sigs}.} }\label{fig:2DMaxwell} }
\end{figure}
Lastly, we demonstrate the space-time plots corresponding to the Voigt model in Figure \ref{fig:2DVoigt} using the parameters from Table \ref{tab:Voigt}. Comparing to Zener and Maxwell, we observe that this model displays more dispersion. In particular, this dispersion is on dramatic display in the heterogenous domain where the reflections are clearly seen in the elastic part, while the dispersion occurs in the Voigt subdomain.
%
\begin{table}[H] 
\centering
\begin{tabular}{|l|c|c|c|c|}
\hline 
		& Elastic& Voigt & Fractional Voigt & Heterogeneous Domain\\
\hline
$\mathrm C_0$ & 1.75 & 1.75 & 1.75& 1.75\\
\hline
$\mathrm C_1$ & 1.75 & 1.75 & 1.75 & 1.75\\
\hline
$a$ & 1 & 0 & 0 & 1 $(x<1/2)$, \: 0 $(x \ge 1/2)$\\
\hline
$\nu$ & 1 & 1 & 0.3 & 1\\
\hline
$\rho$ & 10 & 10 & 10 & 10 \\
\hline
\end{tabular}
\caption{{\footnotesize The parameters used in the fractional Voigt model simulations to create the 2D spacetime plots shown in Figure \ref{fig:2DVoigt}.}}\label{tab:Voigt}
\end{table}

\begin{figure}[H] \center{
\includegraphics[clip,trim={1.9cm 0cm 1cm 0cm}, scale=0.47]{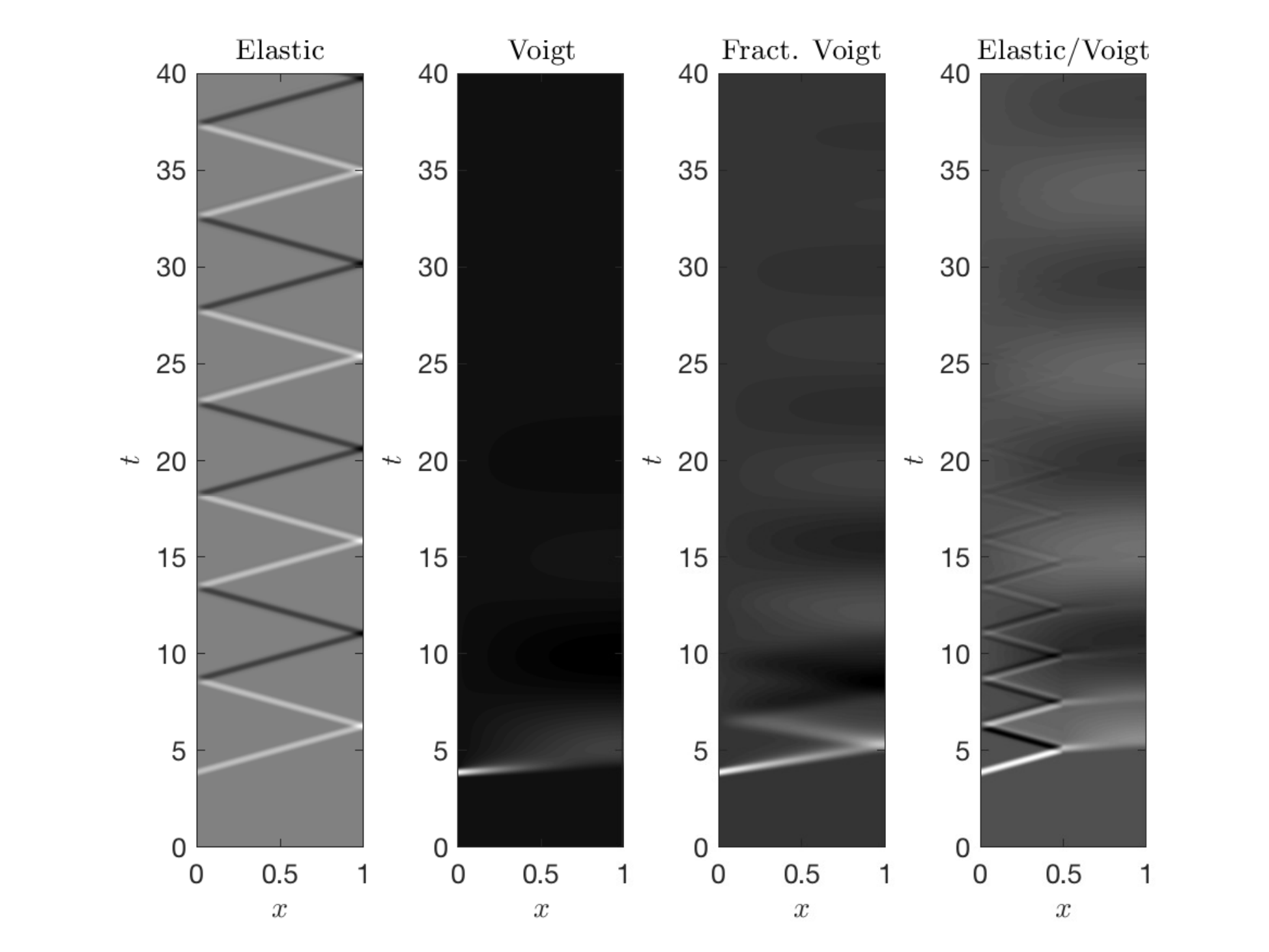}
\includegraphics[clip,trim={1.1cm 0cm 1.9cm 0cm}, scale=0.47]{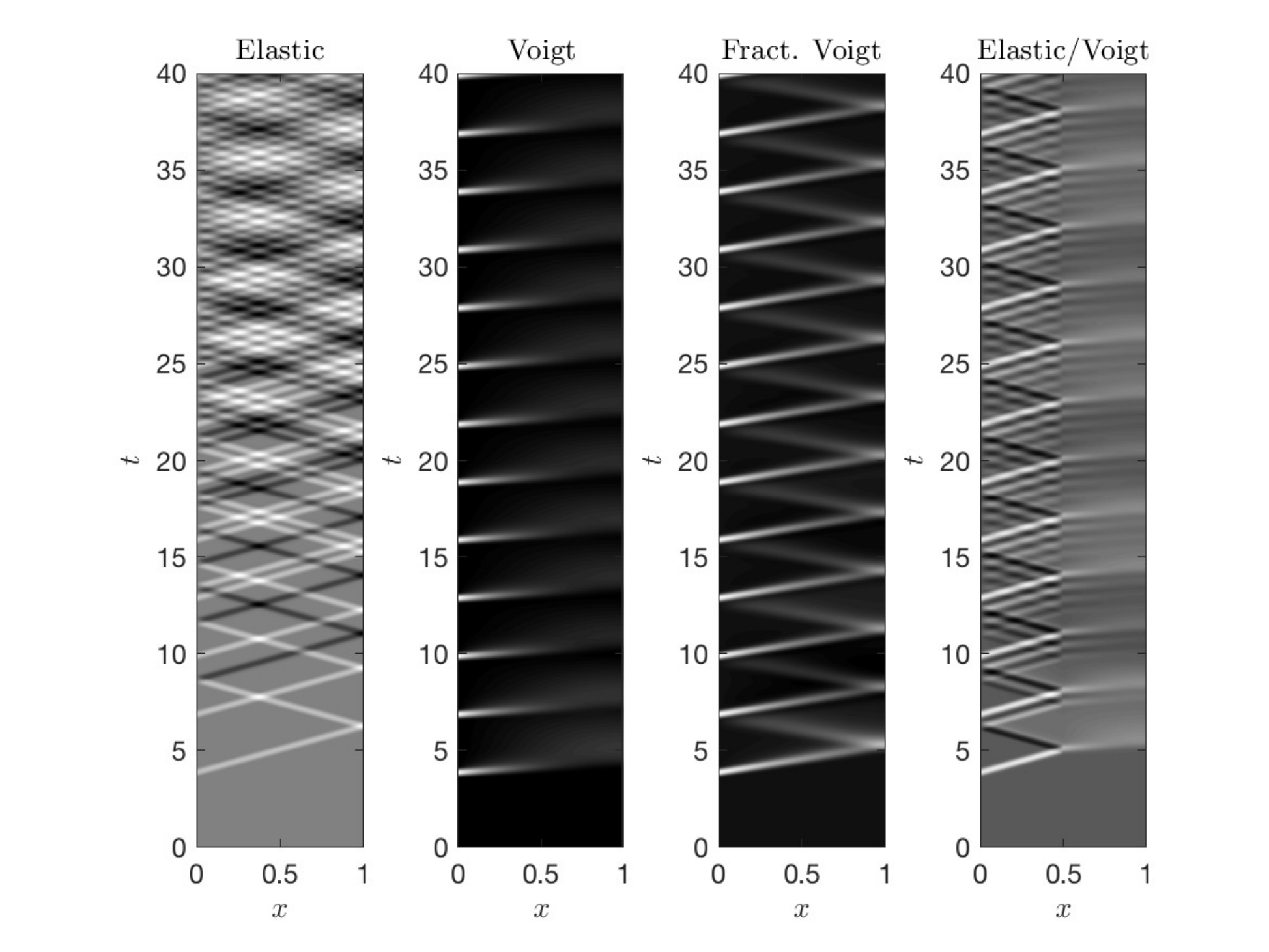}
\caption{{\footnotesize Space-time plots with parameters described in Table \ref{tab:maxwell} where first and last four subplots correspond to the signals shown on the left and right of Figure \ref{fig:sigs}.}} \label{fig:2DVoigt} }
\end{figure}

\subsection{3D numerical simulation} \label{sec:num_sim}

We now present a numerical simulation for viscoelastic waves propagating in the parallelepiped $\Omega=(0,1)\times (0,10)\times (0,1)$ with a Dirichlet boundary on one of the small faces $\Gamma_D:= (0,1)\times\{0\}\times (0,1)$.
The PDE we are simulating is
\begin{align*}
\ddot{\mb u}(t) &= \mathrm{div}\,\bs\sigma(t) 
&&\Omega\times[0,50],\\
\gamma_D\mb u(t) & = 0.25
\left( w(t), 0, 0 \right)^\top
&&\Gamma_D\times[0,50],\\
\gamma_N\bs \sigma(t) & = 0
&&\Gamma_N\times[0,50],
\end{align*}
where $w(t)$ represents an enforced displacement at the Dirichlet boundary that takes value $0$ in $[0,0.5]$, $1$ in $[1,50]$ and transitions smoothly from $0$ to $1$ in the interval $[0.5,1]$. Initial conditions are set to zero.
The material is isotropic and locally homogeneous, with a strain-stress law given 
by
\begin{align*}
\bs\sigma + \partial^\nu\bs\sigma=
2\eps{\uu}+(\nabla\cdot\uu) \mb I
+5 \left( 2\eps{\partial^\nu \uu}+(\nabla\cdot\partial^\nu\uu)\,\mathbf I\right),
\end{align*}
where 
\[
\nu := 
\left\{\begin{array}{c}
0,\quad y \in [0,5),\\[5pt]
1,\quad y\in [5,10].
\end{array}\right.
\]
Therefore when $y<5$, the model is purely elastic and when $y\ge5$ the model is a Zener viscoelastic model.    For the discretization in space we use $\mc P_2$ finite elements on a mesh of 30,720 tetrahedra obtained by partitioning a uniform quadrilateral mesh of $8\times 80\times 8$ elements. Discretization in time is done by TRCQ using 500 time-steps over the interval $[0,50]$. 

  Figures \ref{fig:with_stress} and \ref{fig:p2_refinement} show snapshots of the displacement from the simulation, where the coloring in the first one exhibits the norm of the stress (averaged on each tetrahedron). 
We remark that in both figures, while we choose the same snapshots, the snapshots are not uniform in time.  This is so we can highlight some of the more interesting aspects of the simulation which occur earlier in the time interval. 
\begin{figure}[H]
\centering
\foreach \index in {1, ..., 8} {%
\includegraphics[clip,trim={1.4cm 0cm 2cm 0cm}, scale=0.23]{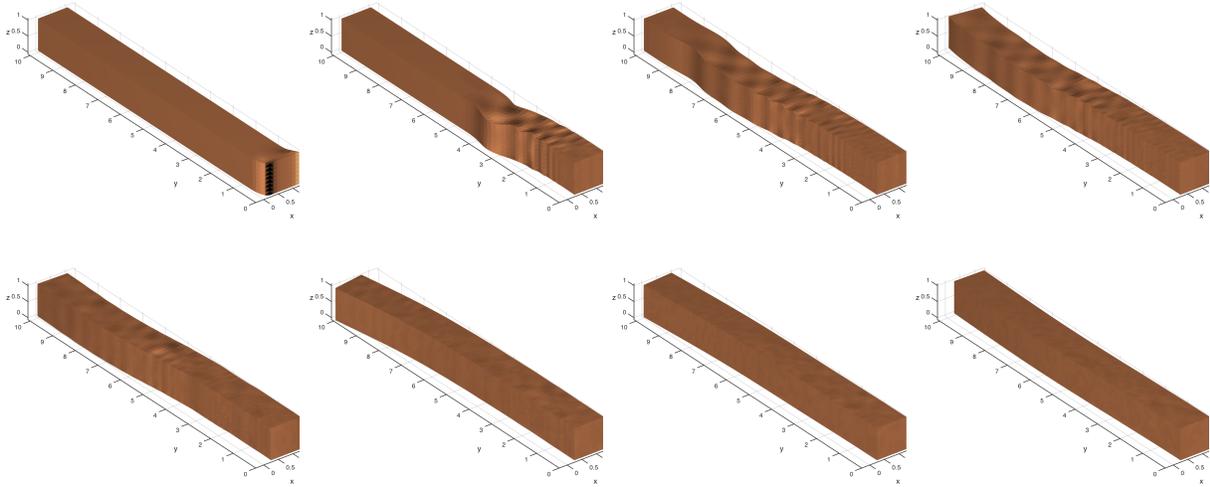}
}
\caption{{\footnotesize Snapshots for the 3D simulation showing the norm of the stress. 
From left to right, then from top to bottom, time-step = 9, 30, 55, 70, 100, 200, 350, 500.}}
\label{fig:with_stress}
\end{figure}

\begin{figure}[H]
\foreach \index in {1, ..., 8} {%
\includegraphics[clip,trim={1.4cm 0cm 2cm 0cm}, scale=0.23]{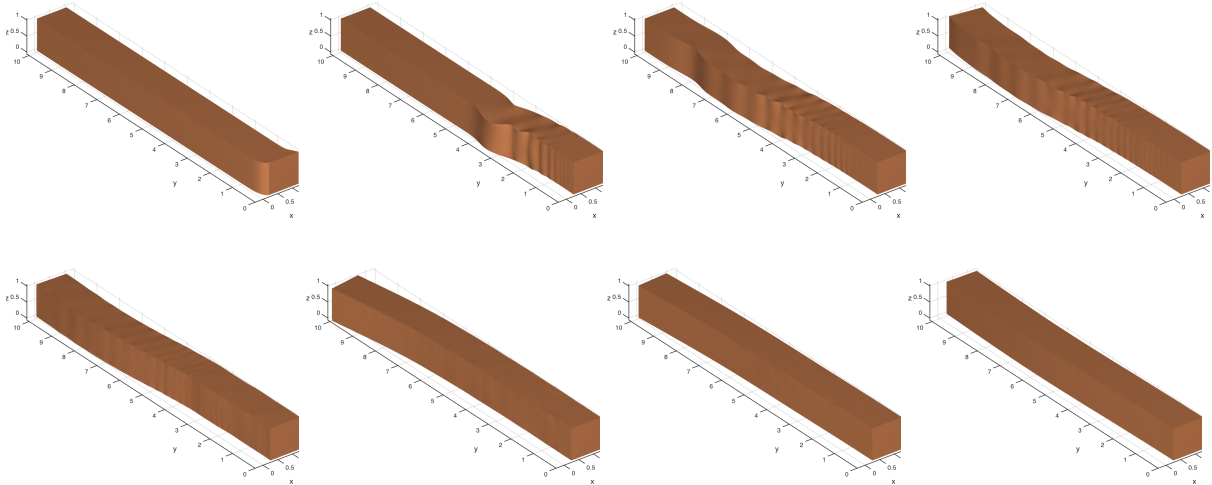}
}
\caption{{\footnotesize The results of the same simulation as Figure \ref{fig:with_stress}, without the colormap. From left to right, then from top to bottom, time-step = 9, 30, 55, 70, 100, 200, 350, 500.}}
\label{fig:p2_refinement}
\end{figure}

In the top rows of these figures, 
we observe the elastic waves, generated by the sudden deformation at $y=0$, propagating along the rod in the $y$ direction. We also note that the elastic part of the rod $(y<5)$ responds quickly to the sudden deformation and adjusts to the new displacement (enforced by the Dirichlet boundary condition) in about $70$ time-steps, while the viscoelastic part of the rod $(y>5)$ shows a much slower response to the change of the displacement taking  around $400$ time-steps to adjust. 

\bibliographystyle{Abbrv}
\bibliography{References}

\end{document}